\renewcommand\appendix{\par
  \setcounter{section}{0}
  \setcounter{subsection}{0}
  \setcounter{equation}{0}
  \setcounter{figure}{0}
  \setcounter{table}{0}
  \renewcommand\thesection{Appendix \Alph{section}}
  \renewcommand\theequation{\Alph{section}\arabic{equation}}
  \renewcommand\thefigure{\Alph{section}\arabic{figure}}
  \renewcommand\thetable{\Alph{section}\arabic{table}}
}
\newtheorem{theorem}{Theorem}[section]
\newtheorem{lemma}{Lemma}[section]
\newcommand{\trev}{}
\newtheorem{proposition}{Proposition}[section]
\newtheorem{corollary}{Corollary}[section]
\theoremstyle{definition}
\newtheorem{example}{Example}[section]
\newtheorem{remark}{Remark}[section]
\newcommand{\be}{\begin{equation}}
\newcommand{\ee}{\end{equation}}
\newcommand{\beqn}{\begin{eqnarray*}}
\newcommand{\eeqn}{\end{eqnarray*}}
\renewcommand{\gg}{g}
\newcommand{\D}{\Delta}
\newcommand{\f}{\varphi}
\newcommand{\R}{{\mathbb R }}
\newcommand{\ba}{\begin{array}}
\newcommand{\ea}{\end{array}}
\newcommand{\beq}{\begin{eqnarray}}
\newcommand{\eeq}{\end{eqnarray}}
\newtheorem{lm}{lemma}
\newcommand{\blem}{\begin{lemma}}
\newcommand{\elem}{\end{lemma}}
\newtheorem{thee}{theorem}
\newtheorem{proo}{proposition}
\newtheorem{co}{corollary}
\newtheorem{rem}{remark}
\newtheorem{deff}{definition}
\newcommand{\bd}{\begin{deff}}
\newcommand{\ed}{\end{deff}}
\newcommand{\bl}{\begin{lm}}
\newcommand{\el}{\end{lm}}
\newcommand{\bp}{\begin{proo}}
\newcommand{\ep}{\end{proo}}
\newcommand{\bt}{\begin{thee}}
\newcommand{\et}{\end{thee}}
\newcommand{\bc}{\begin{co}}
\newcommand{\ec}{\end{co}}
\newcommand{\brm}{\begin{rem}}
\newcommand{\erm}{\end{rem}}
\newcommand{\der}{d}
\def\frak{\mathfrak}
\def\Cal{\mathcal}
\newcommand{\newc}{\newcommand}
\newcommand{\cA}{\mathcal A}
\newcommand{\spA}{\mathrm{span}_{\cA}}
\let\ccdot\cdot
\def\cdot{\hbox to 2.5pt{\hss$\ccdot$\hss}}
\newc{\aR}{\mbox{\boldmath{$ R$}}}
\newc{\aS}{\mbox{\boldmath{$ S$}}}
\newc{\aT}{\mbox{\boldmath{$ T$}}}
\newc{\aW}{\mbox{\boldmath{$ W$}}}
\newcommand{\1}{\mathbb{I}}
\newc{\aK}{\mbox{\boldmath{$ K$}}}
\newc{\aL}{\mbox{\boldmath{$ L$}}}
\newcommand{\CalH}{\mathcal{H}}
 \newcommand{\tnab}{\widetilde{\nabla}}
\let\e=\varepsilon
\let\f=\varphi
\newcommand{\hook}{\raisebox{-0.35ex}{\makebox[0.6em][r]
{\scriptsize $-$}}\hspace{-0.15em}\raisebox{0.25ex}{\makebox[0.4em][l]{\tiny
 $|$}}}
\newcommand{\del}{\partial}
\let\s=\sigma
\newc{\obstrn}[2]{B^{#1}_{#2}}
\newcommand{\rpl}                         
{\mbox{$
\begin{picture}(12.7,8)(-.5,-1)
\put(0,0.2){$+$}
\put(4.2,2.8){\oval(8,8)[r]}
\end{picture}$}}
\newcommand{\lpl}                         
{\mbox{$
\begin{picture}(12.7,8)(-.5,-1)
\put(2,0.2){$+$}
\put(6.2,2.8){\oval(8,8)[l]}
\end{picture}$}}
\newcommand{\bbR}{\mathbb{R}}
\newcommand{\sog}{\mathbf{SO}}
\newcommand{\spin}{\frak{spin}}
\newcommand{\soa}{\frak{so}}
\newc{\tensor}[1]{#1}
\newc{\Mvariable}[1]{\mbox{#1}}
\newc{\down}[1]{{}_{#1}}
\newc{\up}[1]{{}^{#1}}
\newc{\JulyStrut}{\rule{0mm}{6mm}}
\newc{\midtenPan}{\mbox{\sf S}}
\newc{\midten}{\mbox{\sf T}}
\newc{\midtenEi}{\mbox{\sf U}}
\newc{\ATen}{\mbox{\sf E}}
\newc{\BTen}{\mbox{\sf F}}
\newc{\CTen}{\mbox{\sf G}}
\def\sideremark#1{\ifvmode\leavevmode\fi\vadjust{\vbox to0pt{\vss
 \hbox to 0pt{\hskip\hsize\hskip1em
 \vbox{\hsize3cm\tiny\raggedright\pretolerance10000
 \noindent #1\hfill}\hss}\vbox to8pt{\vfil}\vss}}}%
\newcommand{\Span}{\mathrm{Span}}
\numberwithin{equation}{section}
\newcounter{romenumi}
\newcommand{\labelromenumi}{(\roman{romenumi})}
\newcommand{\bma}{\begin{pmatrix}}
\newcommand{\ema}{\end{pmatrix}}
\begin{document}
	\setlength{\leftmargini}{7mm}
 \newcommand{\marg}[1]{\marginpar{\scriptsize  #1}}
 \newcommand{\thomas}[1]{\marg{\color{blue} T. #1}}
\bibliographystyle{abbrv}
\title{Explicit  ambient metrics and  holonomy} 
\vskip 1.truecm 
\author{Ian M.~Anderson} \address{Department of Mathematics and Statistics, Utah State University, Logan Utah, 84322, USA}
\email{\tt Ian.Anderson@usu.edu}
\author{Thomas Leistner}\address{School of Mathematical Sciences, University of Adelaide, SA 5005, Australia} \email{\tt thomas.leistner@adelaide.edu.au}
\author{Pawe\l~ Nurowski} \address{Centrum Fizyki Teoretycznej PAN 
Al. Lotnik\'{o}w 32/46 
02-668 Warszawa, Poland}
\email{nurowski@cft.edu.pl} \thanks{
Support for this research was provided by National Science Grant ACI 1148331SI2-SSE (IA), by
the
 Australian Research
Council via the grants FT110100429 and DP120104582,
and by the Polish National Science Center (NCN)  via  DEC-2013/09/B/ST1/01799.
%
}
\subjclass[2010]{Primary: 53C29, 53A30; secondary: 53C50}
\keywords{Conformal structures, Fefferman-Graham ambient metric, exceptional holonomy, generic distributions, pp-waves}

\begin{abstract}We present three large classes of examples of conformal structures for which the equations for the Fefferman-Graham ambient metric to be Ricci-flat are linear PDEs, which we solve explicitly. These explicit solutions enable us to discuss the holonomy of the corresponding ambient metrics. Our examples include conformal pp-waves and, more importantly,  conformal structures that are defined by generic rank $2$ and $3$ distributions in respective dimensions $5$ and $6$. The corresponding explicit Fefferman-Graham ambient metrics provide a 
large class of metrics with holonomy equal to the exceptional non-compact Lie group $\mathbf{G}_2$ as well as ambient metrics with holonomy contained in $\mathbf{Spin}(4,3)$.
\end{abstract}
\maketitle 
\setcounter{tocdepth}{1}
\newcommand{\gat}{\tilde{\gamma}}
\newcommand{\Gat}{\tilde{\Gamma}}
\newcommand{\thet}{\tilde{\theta}}
\newcommand{\Thet}{\tilde{T}}
\newcommand{\rt}{\tilde{r}}
\newcommand{\st}{\sqrt{3}}
\newcommand{\kat}{\tilde{\kappa}}
\newcommand{\kz}{{K^{{~}^{\hskip-3.1mm\circ}}}}
\newcommand{\gpe}{{g_{_{PE}}}}
\newcommand{\upe}{{\mathcal U}_{_{PE}}}
 

\newcommand{\vect}[1]{\mathbf{#1}}
\renewcommand{\k}{\mathbf{k}}
\renewcommand{\l}{\mathbf{l}}
\renewcommand{\e}{\mathrm{e}}
\renewcommand{\f}{\mathbf{f}}
\newcommand{\bprf}{\begin{proof}}
\newcommand{\eprf}{\end{proof}}


\section{Introduction}
\trev{
Let $g_0$ be a smooth semi-Riemannian metric  on an $n$-dimensional 
manifold $M$, with $n\ge 2$ and  with local coordinates $(x^i)$.
A {\em pre-ambient metric for $g_0$} is, by definition, a smooth semi-Riemannian metric $\tilde g$ 
of the form
\begin{equation}\label{ambient-intro}
	\tilde{g}=2\, dt  d(\varrho t)+t^2g(x^i,\varrho) 
\end{equation}
defined on the 
{\em  ambient space}
$$\tilde{M}=(0,+\infty)\times M\times(-\epsilon,\epsilon),\qquad \epsilon>0,$$
with coordinates $(t,x^i,\varrho)$
and
such that 
$g(x^i,\varrho)_{|\varrho=0}=g_0(x^i)
$.
We call a pre-ambient metric an {\em ambient metric} if 
 \begin{equation}\label{fg-eqs}
	Ric(\tilde g) = 0.
\end{equation}
It is a fundamental observation of  Fefferman and Graham \cite{fefferman/graham85,fefferman-graham07}
that conformally equivalent metrics have the same ambient metrics. More precisely, given a metric $g_0$ with ambient metric $\widetilde{g}$  and a conformally equivalent metric $\hat g_0$
 in the  conformal class  $[g_0]=\{\mathrm{e}^{2\phi}g_0\mid \phi \in C^\infty(M)\}$ of $g_0$, an ambient metric for $\hat g_0$ can be obtained by pulling back $\widetilde g$ by a diffeomorphism of $\widetilde M$. The importance of the ambient metric is that the semi-Riemanian invariants  of $\tilde g$ 
        give rise to conformal invariants of $g_0$. 
        
Given a pre-ambient metric as in \eqref{ambient-intro} we call the system of PDEs for an unknown $\varrho$-dependent family of metrics $g(x^i,\varrho)$ the {\em Fefferman-Graham equations}. Their explicit form is given in \cite[eq. 3.17]{fefferman-graham07}. 
 In general we assume  ambient metrics to be smooth, however, as we attempt in our examples to find the most general solutions to the Fefferman-Graham equations,  we will also present Ricci-flat metrics of the form \eqref{ambient-intro}  that are defined only for  $\varrho \ge 0$ and  differentiable only to finite order  at $\varrho=0$. By a slight abuse of terminology, we will  call them {\em non-smooth ambient metrics}.

Regarding the existence and uniqueness of ambient metrics, Fefferman and Graham \cite{fefferman/graham85,fefferman-graham07}
 have shown  the following:
If  $g_0$ is a smooth metric in odd dimension $n$, then there is always a smooth pre-ambient metric with 
 \begin{equation}
 \label{nabkric}
 \widetilde{\nabla}^kRic(\widetilde g)|_{\varrho=0}=0,
 \end{equation}
 for  all $k=0,1,\ldots\ $, and in which $\widetilde{\nabla}$ is the Levi-Civita connection of $\widetilde{g}$. The proof of this result proceeds by constructing  $g(x^i,\varrho)$ from the initial conditions as a power series in $\varrho$. The  coefficients of this power series are determined by  equation \eqref{nabkric} and thus yield a formal solution to the Fefferman-Graham equations \eqref{fg-eqs}. One can show that the obtained formal power series is convergent whenever $g_0$ is analytic, and hence, in this  case one gets an ambient metric as defined above. For analytic $g_0$ this is the {\em unique analytic} ambient metric. 
 
 When  $n>2$ is even, the situation is more subtle, and in general one can only find pre-ambient metrics satisfying the condition  \eqref{nabkric} for $k=0, \ldots, \frac{n}{2}-1$. 
For \eqref{nabkric} to be satisfied beyond the order $k=\tfrac{n}{2}-1$ one checks if a certain symmetric $(2,0)$-tensor, the so-called {\em Fefferman-Graham obstruction tensor} $\mathcal O_{ij}(g_0)$, vanishes. Fefferman and Graham showed that the vanishing of $\mathcal O_{ij}(g_0)$ is equivalent to the existence of a pre-ambient metric satisfying condition \eqref{nabkric} up to all orders $k$, and hence for analytic $g_0$'s, equivalent to the existence of an ambient metric in  our sense. However, when $n$ is even and  $\mathcal O_{ij}(g_0)$ vanishes,  analytic ambient metrics are not unique.
 }
        
        Unfortunately, for general metrics $g_0$ 
	the  Fefferman-Graham equations \eqref{fg-eqs}
	are a complicated non-linear system of partial differential equations for $g(x^i,\varrho)$ 
	which in general cannot be solved explicitly. This makes it very hard to find explicit ambient metrics. 
	Indeed, prior to the results of  \cite{leistner-nurowski08,leistner-nurowski09}, the only known explicit examples
	of ambient metrics were related to
	 Einstein metrics $g_0$ or certain products of Einstein metrics
	\cite{leistner05a,Leitner05,GoverLeitner09}. 
	In fact, if $g_0$ is an Einstein metric, i.e., $Ric(g_0)=\Lambda g_0$, then an ambient metric is given by
\begin{equation}
\label{einstein-ambient}
\tilde{g}=2\der t\der(\varrho t)+t^2 \Big(1+\frac{\Lambda\varrho}{2(n-1)}\Big)^2g_0.\end{equation}
In other words, for an Einstein metric $g_0$, a solution to the Fefferman-Graham equations is given by $g(x^i,\varrho)=(1+\frac{\Lambda}{2(n-1)}\varrho)^2g_0$.
	
	The goal of this paper 
	is to present 
	 some interesting  classes of examples of  metrics $g_0$
	for which, quite remarkably, 
	{\it the Fefferman-Graham equations reduce to linear PDEs}. 
The 	more general class of metrics with 
	linear Fefferman-Graham equations will be described in our subsequent article~\cite{ipt2}. 	
	Our approach for solving the Fefferman-Graham equations in  our examples, either in closed form or
	by standard power series methods,  is based on a particular feature of the equations. The equations are of second order, but, since they have a singular point at $\varrho=0$,    the first $\varrho$-derivative of $g(x,\varrho)$ at $\varrho=0$ cannot be  chosen freely. Instead, 
	it is determined as $g^\prime|_{\varrho=0}=2P$, where $P$ is the \emph{Schouten tensor}
$$P~=~\frac{1}{n-2}\big(\, Ric(g_0)~-~\frac{Scal(g_0)}{2(n-1)}~ g_0\, \big)$$
of $g_0$. The common feature of the  considered examples is that   the Schouten tensor of $g_0$ is nilpotent (when considered as $(1,1)$-tensor), and we postulate the following ansatz for $g(\varrho, x^i)$
\[g(x^i,\varrho)=g_0+h(x^i,\varrho),\]
where  $h(x^i,\varrho)$ is required to have the {\em same image and kernel as $P$} for all $\varrho$ (again considered as $(1,1)$-tensors). Then the Fefferman-Graham equations turn out to be  {\em linear} second order PDEs for the unknown symmetric tensor $h=h(x^i,\varrho)$.

	The conformal structures presented here are of three different types. 
The first class of examples, 
	which are really a  \emph{prototype}  for all the conformal structures 
	for which the Fefferman-Graham equations become linear, are the so-called \emph{conformal pp-waves}.
	These are presented  in Section~\ref{secpp}.  We extend our results from \cite{leistner-nurowski08} to other signatures \trev{and by determining all solutions to \eqref{fg-eqs}. Hence, we find ambient metrics which, when $n$ is odd, are  defined for $\varrho\ge 0$ and only $\frac{n}{2}-1$-times differentiable  in $\varrho$ at $\varrho=0$. When $n$ is even, the obtained ambient metrics are only defined on the domain $\varrho >0$ as they contain logarithmic terms.} An upper bound for the holonomy algebras of the ambient metrics is determined.   
	
The second class of examples, discussed in Section \ref{secg2},   arises from   conformal 
	structures in signature $(3,2)$ defined by generic rank 2 distributions $\mathcal D$ in dimension 5 \cite{nurowski04}. These distributions can be brought into normal form
	\begin{equation}
	\label{distribution} 
	{\mathcal D}={\rm Span}\Big(\partial_q,\partial_x+p\,\partial_y+q\,\partial_p+F\,\partial_z\Big),\end{equation}
	where  our coordinates on $\R^5$ are $(x,y,z,p,q) $, we write $\del_q=\frac{\partial}{\partial q}$ etc.,  and $F=F(x,y,z,p,q) $ is a smooth function with $\partial_q^2(F)\not=0$. 
The most remarkable aspect of the  associated conformal structures is that their normal conformal Cartan connection reduces to the 
exceptional $14$-dimensional, simple,  non-compact Lie group $\mathbf{G}_2$ (as shown in \cite{nurowski04}) and hence have their  
 conformal holonomy\footnote{We 
 point out that all of our considerations are {\em local} in the sense that we work on simply connected manifolds. This implies that  the holonomy groups we encounter are connected and hence can be identified with their Lie algebras. Moreover, throughout the paper  $\mathfrak{g}_2$ and  $\mathbf{G}_2$ refer   to the {\em split real form} of the exceptional simple Lie algebra of type $G_2$ and to the corresponding connected Lie group in $\mathbf{SO}(4,3)$.}  
contained in this group. In \cite{leistner-nurowski09} we showed that for  distributions defined by
$F=q^2+\sum_{i=0}^6 a_ip^i+b z$,
with constants $a_i$ and $b$, 
  also the holonomy of the associated analytic ambient metric is contained in, and generically equal to, $\mathbf{G}_2$. The results in \cite{leistner-nurowski09} were subsequently generalized  in 
 \cite{graham-willse11} to  {\em all}   conformal structures defined by  analytic distributions $\mathcal D$ as in \eqref{distribution}, by showing that their unique analytic
  ambient metric admits a certain parallel $3$-form (or equivalently, a parallel non-null spinor field) whose stabilizer is   $\mathbf{G}_2$. This result shows that not only the conformal holonomy but also   the holonomy of the ambient metric is  contained in $\mathbf{G}_2$. Also in  \cite{graham-willse11}  a sufficient criterion was given for the holonomy of the ambient metric being equal to $\mathbf{G}_2$, however, the examples in
  \cite{leistner-nurowski09} with holonomy equal to $\mathbf{G}_2$ do {\em not} satisfy this criterion, and in general the criterion is very difficult to check.
   	Here we generalize the results of \cite{leistner-nurowski09}  to a much larger class. In fact, for all conformal classes associated to distributions $\mathcal D$ given by a function $F$ of the following form, 
	\begin{equation}
	\label{fh} 
F=q^2+f(x,y,p)+h(x,y)z,\end{equation}
	the Fefferman-Graham equations turn out to be linear second order PDEs. For such conformal classes, in Section \ref{g2sec} we will  show the following: 
\begin{enumerate}\setlength{\labelsep=2pt}
	\item \trev{We provide the most general formal   solutions (in terms of power series) to the Fefferman-Graham equations, including those that are only twice differentiable in $\varrho$ at $\varrho=0$.}
	\item We derive a linear {\em first order} system that has the same (analytic) solutions as the  Fefferman-Graham equations.
	\item Using this first order system, we derive explicit formulae for the $3$-form and the parallel spinor field for the analytic ambient metric.
	\item For particular choices of $f$ and $h$ we give  explicit (closed form) ambient metrics.
	\item We give an explicit, easy to check criterion  that ensures that the ambient metric has holonomy {\em equal} to $\mathbf{G}_2$.
	\end{enumerate}
	\trev{The first order system in (ii) is crucial as it gives us a tool, leaving the conformal context aside, to construct
	metrics in dimension $7$ with holonomy contained in $\mathbf{G}_2$. Together with the } 
 criterion in (v) it enables us to construct  a large class of metrics  with holonomy {\em equal to} $\mathbf{G}_2$ and   depending on the functions $f=f(x,y,p)$ and $h=h(x,y)$. 
	The examples we obtain include, but are not restricted to, ambient metrics 
 which  are  polynomial functions 
	in the ambient space variable $\varrho$ of {\it any} prescribed order\footnote{We should mention that during the preparation of this paper, the article \cite{Willse14} appeared in which it was shown that the conformal class arising from the distribution found by Doubrov and Govorov \cite{DoubrovGovorov13} with $F=q^{1/3}+y$, defined on a domain with $q>0$, has a (non-polynomial) ambient metric with holonomy equal to $\mathbf{G}_2$.}. 
	\trev{Furthermore, we discuss the holonomy of the non-smooth solutions and surprisingly find metrics with holonomy equal to $\mathbf{G}_2$ amongst them.}

	The third class of examples, which are altogether new and presented in Section \ref{secspin43}, are derived from Bryant's 
	 conformal structures   \cite{bryant06}. They  are naturally associated with generic rank 3-distributions in 
	dimension 6, 
	i.e., rank $3$ distributions $\mathcal D$ on a $6$-manifold $M$ with
	$[{\mathcal D},{\mathcal D}]+{\mathcal D}= T M$.
	According to  \cite{bryant06}, to such distributions one can associate a conformal structure in dimension $6$, whose {\em conformal holonomy} is now contained in $\mathbf{Spin}(4,3)$.
	In this paper we consider a certain family of such distributions locally  on $\R^6$, which is given in terms of a differentiable function  $f=f(x^1,x^2,x^3)$ of {\em only  three variables $(x^1,x^2,x^3)$} satisfying generiticity condition $\frac{\partial f}{\partial x^3}\neq 0$. Explicitly, the distributions from the family are given by	\begin{equation}\label{33dis}
	{\mathcal D}=\Span\left(\,\frac{\partial}{\partial x^3}-x^2\frac{\partial}{\partial y^1},\,\,\,\frac{\partial}{\partial x^1}-f(x^1,x^2,x^3)\,\frac{\partial}{\partial y^2},\,\,\,\frac{\partial}{\partial x^2}-x^1\frac{\partial}{\partial y^3}\,\right), \end{equation}
where $(x^1,x^2,x^3, y^1,y^2,y^3)$ are coordinates on $\R^6$.
We show that for conformal structures defined by $\mathcal D$ with $f=f(x^1,x^3)$ or $f=f(x^2,x^3)$ the 
Fefferman-Graham obstruction tensor $\mathcal O_{ij}(g_0)$
	vanishes. However,   generically they are not conformally Einstein. 
With the vanishing of the obstruction tensor, we are assured that for the conformal classes associated with such $\mathcal D$'s, \trev{the Fefferman-Graham equations have an analytic solution, whenever $f$ is analytic.} Again the Fefferman-Graham equations turn out to be  {\em linear}, which enables us to find the ambient metrics explicitly. To our knowledge, these are the first examples of ambient metrics for Bryant's conformal classes. However, in this case, as the general theory for even dimensions predicts, they  are not unique anymore. Now, new Ricci flat ambient metrics are given and parametrized by a symmetric, trace-free  $(2,0)$-tensor~$\varrho^3Q$.

For the ambient metrics of Bryant's conformal classes defined by 
$f=f(x^1,x^3)$ or $f=f(x^2,x^3)$ we give an upper bound for the  holonomy of the ambient metric by showing that it has to be contained in Poincar\'{e} group in signature $(3,3)$,
\[ \mathbf{PO}(3,3)=\mathbf{SO}(3,3)\ltimes \mathbb{R}^{3,3},\]
which is the stabilizer of a  decomposable $4$-form defining an invariant totally null $4$-plane.
This implies that these ambient metrics cannot have holonomy {\em equal to} $\mathbf{Spin}(4,3)$.
Furthermore,  we show that for ambient metrics with $Q=0$ the holonomy reduces further to
	 \[ \mathbf{PO}(3,2)=\mathbf{SO}(3,2)\ltimes \mathbb{R}^{3,2},\]
	 which in turn is {\em contained} in $\mathbf{Spin}(4,3)$. Note that the representations of the Poincar\'{e} groups in $\mathbf{SO}(4,4)$ are {\em not} the usual  representations as stabilizers of null vectors.
	 	 Moreover we compute 
	  three examples of  ambient metrics, including one with $Q\not=0$. In the example with $Q\neq 0$,  the  ambient holonomy is equal to  $\mathbf{PO}(3,3)$, and hence, is {\em not contained} in $\mathbf{Spin}(4,3)$. The other two examples of ambient metrics  
have $Q=0$, one with  holonomy equal to $\mathbf{PO}(3,2)$, and the other with the $7$-dimensional Heisenberg group as  holonomy.
We believe that these ambient metrics reveal some interesting $4$-form geometry in signature $(4,4)$, which suggests further investigation.

Finally, we  give an explicit example of a conformal structure, 
	associated with a rank 3-distribution as in \eqref{33dis} having $f=x^3+x^1 x^2+(x^2)^2+(x^3)^2$,
	for which the obstruction tensor does {\it not} vanish. This example shows that the vanishing of the obstruction tensor $\mathcal O$ for the conformal classes associated with $f=f(x^1,x^3)$ or $f=f(x^2,x^3)$ is exceptional and is associated with the chosen symmetry, rather than the general feature of  distributions with $f$ depending on  the three variables $(x^1,x^2,x^3)$. Again,  many open questions remain, for example:  What are the conditions on $f$ that are equivalent to the vanishing of the obstructions tensor, and in case of vanishing obstruction, how many ambient metrics are there with  holonomy (contained in) $\mathbf{Spin}(4,3)$?

	Many of the calculations for this paper were performed using the
	Maple {\it DifferentialGeometry} package. 
	Sample worksheets, illustrating the results of this paper,  can 
	be found at \url{http://digitalcommons.usu.edu/dg_applications/}. 
\\[2mm]
{\bf Acknowledgements.} We thank Travis Willse for comments on the first version of this article.

\section{Generalized conformal pp-waves and their ambient metrics}\label{secpp}
\subsection{Generalized pp-waves}
In this section we provide explicit formulae for ambient metrics for  conformal structures on pseudo-Riemannian manifolds which, in Lorentzian signature, are known as  \emph{pp-waves}. The formulae presented here generalize our  results on conformal pp-waves  in \cite{leistner-nurowski08}.
Lorentzian  pp-wave metrics are  defined on an open set $\mathcal{U}\subset \bbR^n$, with coordinates $(x^i, u,v)$, $i=1,2,\dots,n-2$, where they assume the form
$$
g_H=2\der u(\der v+H(x^i,u)\,\der u)+\sum_{i=1}^{n-2}(\der x^i)^2.$$
Here $H=H(x^i,u)$ is a real function on $\mathcal U$ which specifies a pp-wave.
For Lorentzian pp-waves of odd dimension  we have determined the unique  ambient metric that is 
analytic in $\varrho$
 in \cite{leistner-nurowski08} as
\[
\widetilde{g}_H
= 2 \der \left(t \varrho  \right) \der t
 + t^2 \Big(g
+ \big( \sum_{k=1}^{\infty} \frac{\D^kH}{k ! p_k }\varrho^k \big)\ 
\der u^2\Big),\]
where $p_k=\prod_{j=1}^{k}(2j-n)$ and  
 $\Delta=\sum_{i=1}^{n-2}\del_i^2$ is  the flat Laplacian.
When  
 $n$ is even, this formula also gives analytic ambient metrics if one assumes that  $\Delta^\frac{n}{2} H=0$.
 Here we  generalise this result to higher signature and, more importantly, determine all solutions to the Fefferman-Graham equations.

Let $1\le p\le n/2$ be a natural number
and let the indices  run as follows: $A,B,C \ldots \in\{1, \ldots n-2p\}$
$a,b,c \ldots \in\{1, \ldots p\} $. Let 
\[\Cal U\subset \R^n\ni(x^1, \ldots , x^{n-2p}, u^1, \ldots , u^p, v^1, \ldots, v^p)\] be an open set, and $H_{ac}=H_{ca}$ smooth functions on $\Cal U$ satisfying  and 
$\frac{\del}{\del v^b}(H_{ac}) =0$.  
Then we  define a {\em generalized pp-wave}  in signature $(p,n-p)$ (for short {\em pp-wave}),  as follows:
\be
\label{pp}
\gg_H
\
=
\
2 du^a( \delta_{a b} dv^b +  H_{a b}du^b)+ \delta_{AB}dx^Adx^B 
,
\ee
where we denote  $H=(H_{ac})_{a,c=1}^p$.
Such pp-waves admit $p$ parallel null vector fields $\frac{\del}{\del v^a}$. 
The scalar curvature of pp-waves vanishes and their Ricci tensor is given as
\[Ric(\gg_H)=-\Delta(H_{ac})du^a du^c, \]
 with $\Delta=\delta^{AB}\del_A\del_B$  the flat Laplacian with respect to the $x^A$-coordinates.
 Hence, the Schouten tensor of a pp-wave  is given by
 \begin{equation}
 \label{ppschouten}
 P=-\tfrac{1}{n-2}\,\Delta(H_{ac})  \, du^a du^c,\end{equation}
 and therefore is $2$-step nilpotent (i.e. $P^2=0$),  when considered as a $(1,1)$-tensor.
 
 \subsection{Analytic and non-analytic ambient metrics for pp-waves}
The special form of the Schouten tensor in \eqref{ppschouten} of a  metric $g_H$ in \eqref{pp} suggests the following ansatz  for $g(x^A,u^b,v^c,\varrho)$ 
(which we will motivate further in \cite{ipt2})
for the ambient metric  of the conformal class $[g_H]$:
\begin{equation}\label{ambientpp}\widetilde{\gg}_{H,h}=2\der(\varrho t)\der t+t^2(\gg_H+2h_{ac}du^a d u^c)\end{equation} 
with unknown   functions  $h_{ac}=h_{ac}(\varrho, x^A,u^b))$ not depending on the coordinates $v^1, \ldots, v^p$.
Then, computations  completely analogous to \cite{leistner-nurowski08}  prove:
\begin{theorem}\label{th1}
Let  $\gg_H$ be a  pp-wave metric as in \eqref{pp} defined by functions $H_{ac}=H_{ac}(x^A,u^b)$. Then the 
metric $\widetilde{
\gg}_{H,h}$ in 
\eqref{ambientpp} is Ricci-flat if and only if each of the
 unknown   functions  $h_{bd}=h_{bd}(\varrho, x^A,u^b)$ 
satisfies  the linear second order linear PDE 
\begin{equation}\label{hsys}
2\varrho  h^{\prime\prime}_{bd}+(2-n) h^\prime_{bd}
-\Delta(H_{bd}+h_{bd})
=0,
\end{equation}
where the prime denotes the $\varrho$-differentiation. In particular, the metric $\widetilde{
\gg}_{H,h}$ is an ambient metric for the conformal class $[\gg_H]$ given by the pp-wave $\gg_H$ if, in addition, the initial conditions 
\[h_{bd}(\varrho,x^A,u^b)|_{\varrho=0}=0\]
are satisfied.
\end{theorem}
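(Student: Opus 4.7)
The plan is to substitute the ansatz \eqref{ambientpp} into the Fefferman-Graham equations directly and exploit the very special tensorial structure of the perturbation $h_{ac}\,du^a du^c$ to show that most of the terms collapse. First, I would rewrite the ansatz in standard pre-ambient form $\widetilde g = 2\,d(\varrho t)\,dt + t^2 g(x,u,\varrho)$ with
$$g(x,u,\varrho) \;=\; \gg_H \;+\; 2h_{ac}(x,u,\varrho)\,du^a du^c.$$
The key observation is that, for each fixed $\varrho$, $g(\cdot,\varrho)$ is itself a pp-wave of the form \eqref{pp}, with $H_{ac}$ replaced by $\widetilde H_{ac}(x,u,\varrho) \;=\; H_{ac}(x,u) + h_{ac}(x,u,\varrho)$. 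Consequently its Ricci tensor is given immediately by the pp-wave formula $\Ric(g) = -\Delta(H_{bd}+h_{bd})\,du^b du^d$, and its Christoffel symbols retain the nilpotent block structure typical of pp-waves.

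Next I would exploit two structural facts in the coordinate basis $(\partial_{x^A},\partial_{u^a},\partial_{v^a})$. On one hand, $\varrho$-differentiation gives $g'_{ij}=2h'_{bd}$ in the $(u^b,u^d)$-component and zero in every other component. On the other hand, the inverse metric $g^{ij}$ of any pp-wave of the form \eqref{pp} satisfies $g^{u^au^c}=0$, a property preserved under adding the null perturbation $2h_{ac}\,du^a du^c$. These two facts together force
$$g^{kl}g'_{kl} \;=\; 0, \qquad g^{kl}g'_{ik}g'_{jl} \;=\; 0$$
identically, so every term in the Fefferman-Graham system that is quadratic in $g'$ vanishes.

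What remains of the FG equations (see \cite[eq.~3.17]{fefferman-graham07}) is therefore strictly linear, and its only non-trivial components are the $(u^b,u^d)$-components. Substituting $g'_{bd}=2h'_{bd}$, $g''_{bd}=2h''_{bd}$ and the Ricci expression above reduces the $(u^b,u^d)$-component of $\Ric(\widetilde g)=0$ directly to \eqref{hsys}; the $dt$- and $d\varrho$-direction (``constraint-type'') components of $\Ric(\widetilde g)=0$ are satisfied automatically because the metric is already in standard pre-ambient form. The second statement then follows because $g(x,u,\varrho)|_{\varrho=0}=\gg_H$ is equivalent to $h_{bd}|_{\varrho=0}=0$, which is precisely the initial condition required for $\widetilde g_{H,h}$ to qualify as an ambient metric for the conformal class $[\gg_H]$.

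The main subtlety, and the step I would check most carefully, is the claim that the two structural identities above really do kill \emph{every} nonlinear contribution in the FG equations and that no trace or constraint equation produces an additional condition beyond \eqref{hsys}. Once this vanishing is verified component-by-component, the computation is essentially a direct adaptation, to arbitrary signature and to several null directions $\partial_{v^a}$, of the Lorentzian calculation carried out in \cite{leistner-nurowski08}.
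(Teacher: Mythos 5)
Your argument is correct in substance and recovers exactly the computation that the paper leaves implicit: the paper offers no proof of Theorem~\ref{th1} beyond asserting that the computation is ``completely analogous'' to the Lorentzian case of \cite{leistner-nurowski08}, where the Ricci tensor of the ambient ansatz is computed directly from its Christoffel symbols. Organizing the verification instead through the reduced Fefferman--Graham system \cite[eq.~3.17]{fefferman-graham07} is a legitimate and arguably more transparent route. Your two structural identities $g^{kl}g'_{kl}=0$ and $g^{kl}g'_{ik}g'_{jl}=0$ do follow from the facts that $g'$ is supported on the $(u^a,u^b)$-block while $g^{u^au^b}=0$; they kill every nonlinear term of the tangential equation, whose $(u^b,u^d)$-component then reads $2\varrho h''_{bd}+(2-n)h'_{bd}-\Delta(H_{bd}+h_{bd})=0$, i.e.\ \eqref{hsys}, all other components vanishing identically. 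This gives both directions of the equivalence.

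The one point you should repair is the justification for the remaining components of $\mathrm{Ric}(\widetilde{g})$. Only the $dt$-components vanish ``automatically'' for a metric in pre-ambient form (by homogeneity with respect to $t\partial_t$); the $d\varrho$-components produce two genuinely additional equations, namely the divergence constraint $g^{kl}\bigl(\nabla_k g'_{il}-\nabla_i g'_{kl}\bigr)=0$ and the trace equation $g^{kl}g''_{kl}-\tfrac12\, g^{kl}g^{pq}g'_{kp}g'_{lq}=0$, and these are \emph{not} satisfied by an arbitrary pre-ambient metric. They do hold here, but for the same structural reasons you already isolated rather than ``by form'': the trace equation dies because $g''$ and the quadratic term are contracted against $g^{u^au^b}=0$; and in the divergence constraint the partial-derivative terms die because raising a $u$-index produces a $v$-index while $h$ is independent of the $v^a$, and the Christoffel terms die because every nonzero Christoffel symbol of the $\varrho$-dependent pp-wave $g(\cdot,\varrho)$ has upper index of type $x^A$ or $v^a$, never $u^a$, so no contraction reaches the $(u,u)$-block where $g'$ is supported. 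With that check written out, the equivalence and the statement about the initial condition $h_{bd}|_{\varrho=0}=0$ follow exactly as you describe.
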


In \cite{leistner-nurowski08} we have seen how
equation \eqref{hsys} can be solved by standard power series expansion, noticing that its indicial exponents are $s=0$ and $s=n/2$. Here we give a more general existence  result which includes non-analytic solutions and reflects the non-uniqueness of the ambient metric in even dimensions by explicitly showing the ambiguity at order $n/2$.

\begin{theorem}\label{pptheo}
Let $
\Delta=\delta^{AB}\del_A\del_B$ be  the flat Laplacian in $(n-2p)$ dimensions and $H=H(x^A,u^b)$.
\begin{enumerate}\setlength{\labelsep=2pt}
\item When $n$ is odd, the most general solutions $h=h(\varrho,x^A,u^b)$ to equation
 \begin{equation}
\label{fg1neutral4}
2\varrho  h^{\prime\prime}+(2-n) h^\prime
-\Delta(H+h)
=0,
\end{equation}
with $h(\varrho)\to 0$  when $\varrho\downarrow 0$ are given as
\be
h=\sum_{k=1}^{\infty} \frac{\Delta^kH}{k!\prod_{i=1}^k(2i-n)}\varrho^k
+
\varrho^{n/2}\Big(\alpha+\sum_{k=1}^\infty \frac{\Delta^k\alpha}{k!\prod_{i=1}^k(2i+n)}\varrho^k\Big),\label{pro}\ee
where $\alpha=\alpha(x^A,u^b)$ is an arbitrary function of its variables.
In particular, when $H$ is analytic, a unique solution that is analytic in $\varrho$ in a neighbourhood of $\varrho=0$ with $h(0)=0$ is given  by 
 $\alpha\equiv 0$.
\item
When $n=2s$ is even, the most general solutions $h$ to equation (\ref{fg1neutral4}) with $h(\varrho)\to 0$  when $\varrho\downarrow 0$ are given by
\begin{eqnarray}
\label{pre}
h&=&
\sum_{k=1}^{s-1}
 \frac{\Delta^kH}{k!\prod_{i=1}^k(2i-n)}\varrho^k
 + 
 \varrho^{s}\Big(\alpha+\sum_{k=1}^\infty \frac{\Delta^k\alpha}{k!\prod_{i=1}^k(2i+n)}\varrho^k\Big)
  \\
\nonumber &&+\
c_n\varrho^{s}\left(
 \sum_{k=0}^\infty
\left(\log(\varrho)
- q_k\right)\frac{\Delta^{s+k}H}{k!\prod_{i=1}^k(2i+n)}\varrho^k\right)
\\&&{ }
+ c_n\varrho^{s} \ Q\ast 
\sum_{k=0}^\infty
\frac{\Delta^{s+k}H}{k!\prod_{i=1}^k(2i+n)}\varrho^k,
\nonumber
\end{eqnarray}
where $\alpha=\alpha(x^A,u^b)$ and $Q=Q(x^A,u^b)$ are arbitrary functions of their variables, $\ast$ denotes the convolution of two functions with respect to the $x^A$-variables, and the constants are given as
\[ c_n=-
\frac{1}{
(s-1)! \prod_{i=0}^{s-1}(2i-n)},\ \  \
 q_0 = 0,\ \ \ q_k = \sum_{i=1}^k\frac{n+4i}{i(n+2i)},\ \text{  for  $k=1, 2, \ldots $.}\]
In particular, only when $\Delta^{s}H\equiv 0$  are there solutions  that are analytic in $\varrho$ in a neighbourhood of $\varrho=0$ and with $h(0)=0$. These solutions are not unique but are parametrized by analytic functions $\alpha$.
\end{enumerate}
\end{theorem}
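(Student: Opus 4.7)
The plan is to treat equation (\ref{fg1neutral4}) as a linear second order singular ODE in $\varrho$ parametrised by $(x^A,u^b)$, with the flat Laplacian $\Delta$ playing the role of a fixed operator (both in the coefficient of $h$ and in the source term $\Delta H$), and then to apply the Frobenius method at the regular singular point $\varrho=0$. The indicial polynomial coming from the leading part $2\varrho h''+(2-n)h'$ is $s(2s-n)$, with roots $s=0$ and $s=n/2$. The condition $h\to 0$ as $\varrho\downarrow 0$ rules out the $s=0$ homogeneous branch, so all relevant homogeneous solutions lie at exponent $s=n/2$.

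First I would construct the inhomogeneous particular solution by substituting $h=\sum_{k\ge 1}a_k\varrho^k$ and matching coefficients: this yields $(2-n)a_1=\Delta H$ and the recursion $k(2k-n)a_k=\Delta a_{k-1}$ for $k\ge 2$, which iterates to $a_k=\Delta^kH/(k!\prod_{i=1}^k(2i-n))$. Next, for the $s=n/2$ homogeneous branch I substitute $h=\varrho^{n/2}\sum_{k\ge 0}b_k\varrho^k$ with $b_0=\alpha$ free; the recursion is $k(n+2k)b_k=\Delta b_{k-1}$, giving $b_k=\Delta^k\alpha/(k!\prod_{i=1}^k(n+2i))$. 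Convergence in a neighbourhood of $\varrho=0$ for analytic $H$ and $\alpha$ follows from a standard majorant argument.

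In the odd--$n$ case the two indicial roots differ by a non--integer, so the two series above are linearly independent and their sum is the general solution with $h(0)=0$, yielding (\ref{pro}); no logarithmic term can appear. The even case $n=2s$ is the delicate one: the roots differ by an integer, and the particular recursion stalls at $k=s$ because $s(2s-n)=0$ while in general $\Delta a_{s-1}=\Delta^sH/((s-1)!\prod_{i=1}^{s-1}(2i-n))$ is nonzero. I would truncate the regular particular at $k=s-1$ and add a logarithmic Frobenius block $\varrho^s\log(\varrho)\sum_{k\ge 0}\gamma_k\varrho^k+\varrho^s\sum_{k\ge 0}\delta_k\varrho^k$. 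Using that applying $2\varrho\partial_\varrho^2+(2-n)\partial_\varrho-\Delta$ to $\varrho^sf\log\varrho$ reproduces $\log\varrho$ times its action on $\varrho^sf$ plus an explicit log--free remainder, the $\gamma_k$ satisfy a homogeneous--type recursion whose solution forces the constant $c_n$ and the numerators $\Delta^{s+k}H$; the $\delta_k$ then satisfy an inhomogeneous recursion whose accumulating source produces precisely the harmonic--like sums $q_k=\sum_{i=1}^k(n+4i)/(i(n+2i))$.

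I expect the main obstacles to be (a) book--keeping the log recursion carefully enough to recover the explicit factor $q_k$ in closed form, and (b) establishing the \emph{most general} claim, which requires a Frobenius uniqueness argument: any formal solution of (\ref{fg1neutral4}) with $h\to 0$ as $\varrho\downarrow 0$ admits a unique decomposition into the regular particular part, the $\varrho^{n/2}$ homogeneous $\alpha$--family, and (in even dimension) a log block whose log--coefficient is completely determined by $\Delta^sH$. Once this decomposition is fixed, the freedom encoded in $(\alpha,Q)$ (corresponding to the choice of $\delta_0$, together with the analytic homogeneous solution added to the log particular) exhausts the non--uniqueness; in particular, analytic solutions exist only when $\Delta^sH\equiv 0$, in which case $\alpha$ remains a free arbitrary function parametrising them.
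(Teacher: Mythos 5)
Your computational core is correct and would reproduce the stated formulas, but your route differs from the paper's in one structural respect. You run the Frobenius method directly at the ``operator level'', treating $\Delta$ as a fixed coefficient and expanding $h$ in powers of $\varrho$ (plus a $\varrho^{n/2}$ branch and, for even $n$, a log block); the recursions $k(2k-n)a_k=\Delta a_{k-1}$ and $k(n+2k)b_k=\Delta b_{k-1}$ you write down are exactly the ones that generate the series in \eqref{pro} and \eqref{pre}. The paper instead first sets $F=H+h$, so that $F$ solves the homogeneous equation \eqref{feqhom}, and then takes the Fourier transform in the $x^A$-variables, turning $\Delta$ into multiplication by $-\|x\|^2$ and the PDE into a genuine scalar ODE \eqref{feqhomode} at each frequency. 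This buys two things that your direct expansion does not automatically provide. First, the claim that these are the \emph{most general} solutions with $h\to 0$ as $\varrho\downarrow 0$: classical Frobenius theory for a scalar ODE with a regular singular point guarantees that \emph{every} solution is a combination of the two Frobenius branches, whereas your argument, as you yourself note, only decomposes solutions already assumed to admit a formal (generalized) power series in $\varrho$; closing that gap without the Fourier reduction requires an extra argument. Second, the convolution term $Q\ast(\cdots)$ in the even case arises naturally in the paper because the free Frobenius constant $\hat q_0$ in the second solution is allowed to depend on the frequency variable, and the inverse transform of a pointwise product is a convolution. In your framework this freedom appears only as the choice of $\delta_0$, i.e.\ it is absorbed into the $\alpha$-family (indeed $c_n\varrho^{s}\,Q\ast\sum_k\Delta^{s+k}H\,\varrho^k/(k!\prod_i(2i+n))$ equals the $\alpha$-series with $\alpha=c_nQ\ast\Delta^{s}H$), which is consistent with the theorem but means you should say explicitly that the $Q$-term adds no freedom beyond $\alpha$ rather than leaving it implicit. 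With the completeness step either supplied by the Fourier reduction or by an independent argument that every solution vanishing at $\varrho=0$ has the assumed Frobenius form, your proof goes through.
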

\begin{proof}
Let $h$ be a solution to \eqref{fg1neutral4} with initial conditions 
 $h(\varrho,x^A,u^b)_{|\varrho=0}\equiv 0$. Then 
 $F= H+h$
  is a solution to the homogeneous equation 
 \be
\varrho F''+(1-\tfrac{n}{2})F'-\tfrac12 \Delta F=0,\label{feqhom}
\ee
with the initial condition $ F(\varrho,x^A,u^b)_{|\varrho=0}= H(x^A,u^b)$. 
Let $\hat F$ and $\hat H$ denote the Fourier transforms with respect to the $x^A$-variables of $F$  and $H$. Recall  that $\widehat{\Delta F}=-\|x\|^2\hat F$, we easily find
the Fourier transform of  equation \eqref{feqhom} to be
 \be
\varrho \hat{F}''+(1-\tfrac{n}{2})\hat{F}'+\frac{\|x\|^2}{2} \hat{ F}=0,\label{feqhomode}
\ee
with initial condition  $ \hat F(\varrho,x^A,u^b)_{|\varrho=0}= \hat H$.
For each point $(x^A,u^b)\in \mathbb{R}^{n-p}$, which we fix for the moment, 
 this is an ODE in $\varrho$ whose solutions can be determined by the Frobenius method (see for example \cite[Section 6.3.3]{coddington-carlson97}). The indicial polynomial of  \eqref{feqhomode} is  $\lambda(\lambda-1)-(\frac{n}{2}-1)\lambda$ and hence has zeros $0$ and $\frac{n}{2}$, which forces us to distinguish the cases when $n$ is odd and $n$ is even.
 
For $n$ odd the general solution to \eqref{feqhomode} is given by
\be \label{proode1}
\hat F = a \varrho^{n/2} F_1 + b F_2,
\ee
for arbitrary constants $a$ and $b$, and with $F_1$ and $F_2$  given by
\begin{eqnarray}
\label{f1}F_1(x^A,\varrho)&=&1+\sum_{k=1}^\infty \frac{(-1)^k\|x\|^{2k}}{k!\prod_{i=1}^k(2i+n)}\varrho^k\\
\label{f2}F_2(x^A,\varrho)&=& 1+\sum_{k=1}^{\infty} \frac{(-1)^k\|x\|^{2k}}{k!\prod_{i=1}^k(2i-n)}\varrho^k.
\end{eqnarray}
Hence, when we vary $(x^A,u^b)$ we get that the general solution to \eqref{feqhomode} is given by
\be \label{proode}
\hat F = \alpha \varrho^{n/2} F_1 + \beta F_2,
\ee
where $\alpha=\alpha(x^A,u^b)$ and $\beta=\beta(x^A,u^b)$ are arbitrary functions of their variables.
The initial conditions $\hat F|_{\varrho=0}=\hat H$, or more precisely $\hat F\to \hat H$ if $\varrho\downarrow 0$, imposes $\beta =\hat{ H}$. Hence, we obtain
\begin{eqnarray*}
 \hat F & = & \sum_{k=0}^{\infty} \frac{(-1)^k\|x\|^{2k}\hat H}{k!\prod_{i=1}^k(2i-n)}\varrho^k
 + \sum_{k=0}^\infty \frac{(-1)^k\|x\|^{2k}\alpha}{k!\prod_{i=1}^k(2i+n)}\varrho^k
 \\
 &=&
  \sum_{k=0}^{\infty} \frac{ \widehat{\Delta^k H}}{k!\prod_{i=1}^k(2i-n)}\varrho^k
 + \sum_{k=0}^\infty \frac{\widehat{\Delta^k \check{\alpha}}}{k!\prod_{i=1}^k(2i+n)}\varrho^k,
 \end{eqnarray*}
taking into account that 
$ (-1)^k \|x\|^{2k}\hat{ H}  =\widehat{\Delta^k H}$ and denoting by $\check{\alpha}$ the inverse Fourier transform  of $\alpha$. Applying the inverse Fourier transform to this expression gives us
\[F=  \sum_{k=0}^{\infty} \frac{ \Delta^k H}{k!\prod_{i=1}^k(2i-n)}\varrho^k
 + \sum_{k=0}^\infty \frac{\Delta^k \check{\alpha}}{k!\prod_{i=1}^k(2i+n)}\varrho^k,
 \]
 and thus the formula in the case $n$ odd.

If $n=2s$ is even,  the general solution to \eqref{feqhomode} is given by
\be
\hat F = \alpha \varrho^{s} F_1 + \beta \Big( 
 c_n(-1)^{s}\|x\|^n\varrho^{s}
\log(\varrho)F_1 + F_3 \Big)
\ee
where $\alpha=\alpha(x^A,u^b)$ and $\beta=\beta(x^A,u^b)$ are arbitrary functions of their variables, $c_n$ is the constant defined  in the statement of the  theorem,  $F_1$ is defined in \eqref{f1} and $F_3$ can be computed as
\[
F_3= 1+ \sum_{k=1}^{s-1}
 \frac{(-1)^{k}\|x\|^{2k}}{k!\prod_{i=1}^k(2i-n)}\varrho^k
 - c_n(-1)^{s}\|x\|^n\varrho^{s}
\left(  \sum_{k=1}^\infty\frac{(-1)^{k}\|x\|^{2k}  }{k!\prod_{i=1}^k(2i+n)}\hat{q}_k \varrho^k\right).
\]
Here $\hat{q}_0=\hat{q}_0(x^A,u^b)$ is an arbitrary function of the  $x^A$'s and the $u^c$'s and 
 the remaining $\hat q_k$ are defined as
\[ \hat{q}_k =\hat{q}_0+ \sum_{i=1}^k\frac{n+4i}{i(n+2i)},\ \ \text{ for }k=1, 2, \ldots.\]
Again, the initial conditions $\hat F|_{\varrho=0}=\hat H$, or more precisely $\hat F\to \hat H$ when $\varrho\downarrow 0$, imposes $\beta =\hat{ H}$. Applying the inverse Fourier transform to our solutions $\hat F$, and taking into account that 
$ (-1)^k \|x\|^{2k}\hat{ H}  =\widehat{\Delta^k H}$ and that 
the convolution  satisfies $\widehat{Q\ast \Delta H}=2\pi^\frac{n-2}{2}\widehat {Q}\;\widehat{\Delta H}$,
 yields the formula in the theorem with the function  $Q$ in the theorem given by the inverse  Fourier transform of $2\pi^\frac{n-2}{2}\hat{q}_0$.
\end{proof}

\begin{remark} Theorems \ref{th1} and \ref{pptheo}
have the following  consequences:
\begin{enumerate}\setlength{\labelsep=2pt}
\item For $n$ odd, the solutions $h_{ac}$ given  in  Theorem \ref{pptheo} by analytic functions $H_{ac}$  defining the pp-wave metric $g_H$  in \eqref{pp} provide us with the  unique analytic  ambient metric \begin{equation}\label{anaambientpp}
\widetilde{\gg}_H=2\der(\varrho t)\der t+t^2\Big(\gg+2\Big(
\sum_{k=1}^{\infty} \frac{\Delta^kH_{ac}}{k!\prod_{i=1}^k(2i-n)}\varrho^k\Big)
du^a d u^c\Big)
\end{equation}
for the conformal class of $g_H$.
\item Theorem \ref{pptheo}  also shows  that, when $n$ is odd, the ambiguity introduced by the arbitrary function $\alpha$ gives only {\em non-analytic} solutions, as guaranteed by the uniqueness statement in the Fefferman-Graham result. In contrast, when $n$ is even, the ambiguity coming from the function $\alpha$ adds an analytic part to a solution and, in case of $\Delta^{n/2}H_{ac}=0$, gives new {\em analytic}  ambient metrics.\end{enumerate}
\end{remark}
We conclude this section by giving an example of a Lorentzian pp-wave (i.e., with $p=1$), which, in even dimensions, does not satisfy the sufficient condition $\Delta^{n/2}H=0$ for our ansatz to give an analytic ambient metric, but which however admits non-analytic ambient metrics.
\begin{example}\label{BesselExpl}
For $k\in \mathbb{R}$ we consider the $n$-dimensional Lorentzian pp-wave 
\[
g=2\der u\der v+\e^{2k x^1}\der u^2+\sum_{A=1}^{n-2}(\der x^A)^2,\]
i.e., with $H=\frac{1}{2}\e^{2k x^1}$, 
which is a product of a $3$-dimensional pp-wave and the flat Euclidean space of dimension $n-3$. When $k=0$ this is the flat metric which has a flat ambient metric. Hence we assume $k\not=0$ from now on. Note that, when $n$ even,  this metric does not satisfy the condition $\Delta^{n/2}H=0$,  which ensures the existence of an analytic ambient metric.
However, for  finding an ambient metric (non-analytic when $n$ even) we make the  ansatz with $h=\frac{1}{2}\left(\phi(\varrho)-1\right)\e^{2k x^1}$ for a function $\phi$  of $\varrho$ with $\phi(0)=1$ and set
\[
\widetilde{\gg}
\ = \
 2\der(\varrho t)\der t+t^2(\gg+2hdu^2)
 \ =\ 
 2\der(\varrho t)\der t+t^2\left(
 2\der u\der v+\phi(\varrho)\e^{2k x^1}\der u^2+\sum_{i=1}^{n-2}(\der x^i)^2\right).
\]
 According to Theorem \ref{th1} this metric is Ricci flat if
 \[2\varrho  \phi^{\prime\prime}+(2-n) \phi^\prime -4k^2 \phi=0,\]
 and the  initial condition is $\phi(0)=1$. Of course, this equation can be solved using the power series techniques  in Theorem \ref{pptheo} exhibiting the difference between $n$ odd and even. But, for example, when $n=4$ this equation becomes
 \[\varrho  \phi^{\prime\prime}- \phi^\prime -2k^2 \phi=0.\]
The general  solution with $\phi(0)=1$ is given 
  in closed form using the modified Bessel functions $I_2$ and $K_2$
as 
  \[
\phi(\varrho)=  4k^2\varrho K_2(2k\sqrt{2\varrho}) + C \varrho I_2( 2k\sqrt{2\varrho}),
\]
where $C$ is an arbitrary constant.
This solution is not analytic at $\varrho=0$, since the function $\varrho\mapsto \varrho K_2(2k\sqrt{2\varrho})$ fails to have a bounded second derivative at $\varrho=0$. 
The function $\varrho\mapsto
\varrho I_2( 2k\sqrt{2\varrho})
$  is analytic and the constant $C$  is a remnant of the non-uniqueness. In contrast, when $n=5$ the equations becomes
\[\varrho  \phi^{\prime\prime}- \frac{3}{2}\phi^\prime -2 \phi=0.\]
The general  solution with $\phi(0)=1$ is given by
\begin{eqnarray*}
\phi(\varrho)&=& 
\cosh (2k\sqrt{2\varrho}) -2k \sqrt{2}\sinh (2\sqrt{2\varrho}) + \frac{8k^2}{3}\, \varrho\, \cosh (2k\sqrt{2\varrho})
\\
&&+ 3\,C\left(   \sinh (2k\sqrt{2\varrho})
-2k \sqrt{2\varrho}\, \cosh (2k\sqrt{2\varrho})
+ \frac{8k^2}{3}\,\varrho\,\sinh (2k\sqrt{2\varrho}) 
\right),
\end{eqnarray*} 
with an arbitrary constant $C$. The unique analytic solution  is obtained  by $C=0$.
\end{example}
\subsection{On the ambient holonomy for conformal pp-waves}
We conclude our observations about conformal pp-waves by describing the holonomy of their ambient metrics. In this section we describe the possible holonomy for ambient metrics of conformal pp-waves.

\begin{remark}[Terminology]
\label{terminology}
In this and some of the following sections, we will 
determine the holonomy algebras of  semi-Riemannian manifolds $(\widetilde{M}\widetilde{g})$ at some point~$p\in \widetilde{M}$. However, since the holonomy algebras at different points are conjugated  to each other in $\mathbf{SO}(r,s)$, we will not make the point explicit in our notation. For a parallel tensor field $\Phi$, i.e. with $\widetilde{\nabla} \Phi=0$, we will use the  terminology {\em stabilizer of $\Phi$} by which we mean all linear maps $H\in \soa(T_p\widetilde{M},g_p)$   that  act trivially on $\Phi|_p$, i.e. $H\cdot \Phi|_p=0$. If $\widetilde{\nabla} \Phi=0$, then the holonomy algebra is contained in the stabilizer of~$\Phi$.\footnote{Note that the converse is not true: a tensor whose stabilizer contains the holonomy algebra, must not be parallel. 
This can be easily understood in the example of a parallel vector field. If one multiplies the parallel vector field by a function, the resulting vector field is no longer parallel, however  still invariant under the holonomy at each point.}
Moreover, we say that a distribution $\Cal V$ is parallel if it is invariant under parallel transport, or equivalently if $\tnab_X V\in \Gamma(\Cal V)$ for all $X\in T\widetilde {M}$ and all $V\in \Gamma(\Cal V)$. 
The  {\em stabilizer of $\Cal V$} consists of 
 all linear  maps  in $ \soa(T_p\widetilde{M},g_p)$  that leave $\Cal V|_p$ invariant.  Again, if there is a parallel distribution, then the holonomy algebra is contained in its stabilizer.\end{remark}

First we note that a pp-wave  $(M,g)$  as defined in \eqref{pp} admits a parallel null distribution  $\mathcal V$ spanned by the parallel vector fields $\frac{\del}{\del v^a}$, for $a=1,\ldots, p$.  Moreover, the image of the Ricci endomorphism is contained in $\mathcal V$. Then, on the other hand,  the results in \cite[Theorem 1.1]{leistner-nurowski12}, in the case $p=1$, and the generalization in \cite[Theorem 1]{Lischewski14} imply that the normal conformal tractor bundle  admits a totally null subbundle of rank $p+1$ that is parallel for the normal conformal tractor connection of $[g]$, i.e., its fibres are invariant under the conformal holonomy. On the other hand, in the case of odd-dimensional analytic pp-waves, the general theory in \cite{graham-willse11} ensures that parallel objects of the tractor connection carry over to parallel objects of the analytic ambient metric. Hence, for an odd dimensional analytic pp-wave,  the  holonomy of the analytic ambient metric in \eqref{anaambientpp} admits an invariant totally null plane of rank $p+1$. However, in the next theorem we verify and strengthen this result by directly determining the holonomy of general metrics of the form \eqref{ambientpp}. We will prove the following result in the case of $p=1$ and in Lorentzian signature but it clearly generalises to larger $p$ and other signatures in an obvious way.
\begin{theorem}
Let 
 $F=F(\varrho, u,x^1,\ldots, x^{n-2})$  be a smooth function on $\R^{n}$.
Then the metric
\[
\widetilde{g}_F
= 2 \der (t \varrho  ) \der t
 + t^2 \Big( 2\der u(\der v+F\,\der u)+\delta_{AB}\der x^A\, dx^B\Big)
 \]
 on $\widetilde M=\R^{n+1}\times \R_{>0}$
satisfies the following properties:
\begin{enumerate}\setlength{\labelsep=2pt}
\item The distribution $\widetilde{\mathcal V}=\mathrm{span}(\del_v,\del_\varrho)$ is parallel, i.e., it is invariant under parallel transport.
\item Let $\mathcal V=\R\cdot \del_v$ be the distribution of null lines spanned by $\del_v$ and  $\mathcal V^\perp$ be the distribution of vectors in $T\widetilde M$ that are orthogonal to $\del_v$, i.e., $\mathcal V^\perp=\mathrm{span} (\del_v,\del_\varrho,\del_1,\ldots ,\del_{n-2},\del_t)$. Then
the curvature $\widetilde{R} $ of $\widetilde{g}_F$ satisfies
\[\widetilde{R}(U,V)Y=0,\ \ \text{ for all $U,V\in \mathcal V^\perp$ and $Y\in T\widetilde{M}$.} \]
\item The holonomy algebra $\mathfrak{hol}(\widetilde{g}_F)$ of $\widetilde{g}_F$ is contained in 
\[
\Big(\mathfrak{sl}_2\R\ltimes (\R^2\otimes \R^{n-2})\Big)\oplus\R
 =
 \left\{{\small
\begin{pmatrix}
X & Z & a J
\\[1mm]
0&0&-Z^\top
\\[1mm]
0&0 &-X^\top
\end{pmatrix}}
\mid
\begin{array}{l}
 X\in \mathfrak{sl}_2\R, \\ Z\in \R^2\otimes \R^{n-2},\\
  a\in \R\end{array}\right\},
\]
where $J$ is the $2\times 2$-matrix $ J={\small \begin{pmatrix}0&1\\-1&0\end{pmatrix}} $.
\end{enumerate} 
\end{theorem}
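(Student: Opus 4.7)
\emph{Proof plan.} The strategy is a direct computation of the Christoffel symbols and curvature of $\widetilde g_F$. The full list of non-zero Christoffel symbols splits as $\Gamma=\Gamma_0+\Delta\Gamma$, where $\Gamma_0$ belongs to the flat ambient metric $\widetilde g_0$ (obtained by setting $F\equiv 0$), namely $\Gamma^\varrho_{t\varrho}=\Gamma^u_{tu}=\Gamma^v_{tv}=1/t$, $\Gamma^B_{tA}=\delta_{AB}/t$, $\Gamma^\varrho_{uv}=-1$, $\Gamma^\varrho_{AB}=-\delta_{AB}$, while $\Delta\Gamma$ collects the $F$-dependent corrections $\Gamma^v_{\varrho u}=F_\varrho$, $\Gamma^v_{uA}=F_A$, $\Gamma^t_{uu}=-tF_\varrho$, $\Gamma^\varrho_{uu}=2\varrho F_\varrho-2F$, $\Gamma^v_{uu}=F_u$, $\Gamma^A_{uu}=-F_A$. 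The structural fact driving everything is that \emph{every} $\Delta\Gamma^c_{ab}$ has $u$ among its lower indices. Assertion (i) is then immediate: every non-zero $\Gamma^c_{av}$ and every non-zero $\Gamma^c_{a\varrho}$ has upper index in $\{v,\varrho\}$, so $\widetilde\nabla\del_v$ and $\widetilde\nabla\del_\varrho$ take values in $\widetilde{\mathcal V}$.

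For (ii), flatness of $\widetilde g_0$ forces $\widetilde R(\widetilde g_0)\equiv 0$, so any non-vanishing contribution to $\widetilde R^d_{cab}(\widetilde g_F)$ must involve at least one factor from $\Delta\Gamma$ and hence a $u$ in a lower slot. When $a,b\neq u$, this $u$ must appear in the remaining slot $c$ or in a contracted summation index. The resulting small finite list of $\partial\Delta\Gamma$ and $\Gamma\cdot\Delta\Gamma$ terms can be organised by the position of $u$; in each case the contributions either vanish individually or cancel pairwise by antisymmetry in $a\leftrightarrow b$, giving $\widetilde R^d_{cab}=0$ for $a,b\in\mathcal V^\perp$.

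For (iii), by (i) the flag $\widetilde{\mathcal V}\subset\widetilde{\mathcal V}^\perp=\mathrm{span}(\del_\varrho,\del_v,\del_1,\dots,\del_{n-2})\subset T\widetilde M$ is parallel, so $\mathfrak{hol}(\widetilde g_F)$ lies in the stabiliser of this flag in $\soa(2,n)$, whose elements are parametrised by $X\in\gla_2$, $Z\in\R^2\otimes\R^{n-2}$, an antisymmetric $2\times 2$ block $A=-A^\top$, and $Y\in\soa(n-2)$. Two further reductions cut this down to the stated subalgebra. First, the bivector $\Omega:=t^{-2}\,\del_\varrho\wedge\del_v$ is parallel: the Christoffels give $\widetilde\nabla_t(\del_\varrho\wedge\del_v)=(2/t)\,\del_\varrho\wedge\del_v$ (from $\Gamma^\varrho_{t\varrho}=\Gamma^v_{tv}=1/t$), exactly compensated by $t^{-2}$, while $\widetilde\nabla_X(\del_\varrho\wedge\del_v)=0$ for every other coordinate $X$. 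Since $\Omega$ is a nowhere-zero section of $\Lambda^2\widetilde{\mathcal V}$, holonomy preserves it, forcing $\mathrm{tr}(X)=0$, i.e.\ $X\in\sla_2\R$. Second, a direct computation shows $\widetilde R^A_{Bab}\equiv 0$ for transverse $A,B$ and all $a,b$: the only $\Gamma^A_{bB}$ with $B$ transverse is the background $\Gamma^A_{tB}=\delta_{AB}/t$, and the quadratic products $\Gamma^A_{ae}\Gamma^e_{bB}$ either vanish or cancel against their $a\leftrightarrow b$ partners. Ambrose--Singer applied to the middle-block projection (a Lie algebra homomorphism, since parallel transport respects the flag) then forces $Y=0$ in every holonomy element. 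As the antisymmetric $2\times 2$ matrices are spanned by $J$, and a short block-matrix check shows the $aJ$ block commutes with both the $\sla_2$ and the $\R^2\otimes\R^{n-2}$ blocks, it appears as a direct summand, yielding $\mathfrak{hol}(\widetilde g_F)\subset(\sla_2\R\ltimes(\R^2\otimes\R^{n-2}))\oplus\R$ as claimed. The main technical obstacle is the case analyses in (ii) and in the identity $\widetilde R^A_{Bab}=0$; both are tedious but routine given the sparsity of $\Gamma$.
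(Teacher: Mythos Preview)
Your proof is correct and follows essentially the same route as the paper: direct computation of $\widetilde\nabla$ for (i), explicit curvature identities for (ii), and for (iii) the stabiliser of the parallel null $2$-plane together with a parallel $2$-tensor to kill the trace and an Ambrose--Singer argument to kill the $\soa(n-2)$ block. The one cosmetic difference is that you use the parallel bivector $\Omega=t^{-2}\,\del_\varrho\wedge\del_v\in\Gamma(\Lambda^2\widetilde{\mathcal V})$ to force $X\in\sla_2\R$, whereas the paper uses the metrically dual $2$-form $\mu=t\,dt\wedge du$; these are equivalent. Likewise, your phrasing of the $S=0$ step via the induced flat connection on the quotient $\widetilde{\mathcal V}^\perp/\widetilde{\mathcal V}$ is a clean repackaging of the paper's explicit parallel-transport argument (which actually establishes the slightly stronger fact that curvature maps $\widetilde{\mathcal V}^\perp$ into $\R\cdot\del_v$, not just into $\widetilde{\mathcal V}$).
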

\begin{proof}
Let $\widetilde{\nabla}$ be the Levi-Civita connection of $\widetilde{g}_F$. Then a straightforward computation reveals that
\begin{equation}\label{nabpp}
\begin{array}{rcl}
\widetilde{\nabla} \del_v&=& \tfrac{1}{t}\del_v\otimes dt - \del_\varrho\otimes du,
\\[1mm]
\widetilde{\nabla} \del_\varrho&=&\tfrac{1}{t} \del_\varrho\otimes dt
+  F_\varrho\, \del_v\otimes du, \\[1mm]
\widetilde{\nabla} \del_A&=&
\tfrac{1}{t} \del_A\otimes dt
+
F_A\,  \del_v\otimes du -
 \del_\varrho\otimes dx^A,\qquad A=1, \ldots , n-2,
\end{array}\end{equation}
in which $F_A=\del_A(F)$, and $F_\varrho=\del_{\varrho}(F)$. The first two equations show that the distribution $\widetilde{\mathcal V}$ is invariant under parallel transport, and also that, in general, there is no invariant null line in $\widetilde{\mathcal V}$. Moreover, it allows us to show that the curvature satisfies
$\widetilde{R}(X,Y)\del_v=0$ for all $X,Y\in T\widetilde M$, 
\begin{eqnarray*}
\widetilde{R}(\del_A,\del_u)\del_\varrho&=&\widetilde{R}(\del_\varrho,\del_u)\del_i\ =\ F_{A\varrho}\,\del_v
\\
\widetilde{R}(\del_\varrho,\del_u)\del_\varrho&=&F_{\varrho\varrho}\,\del_v,
\\
\widetilde{R}(\del_u,\del_B)\del_A&=&-(\delta_{AB}F_\varrho+ F_{AB})\,\del_v,
\end{eqnarray*}
and that all other terms of the form 
 $\widetilde{R}(X,Y)\del_\varrho$ and $\widetilde{R}(X,Y)\del_i$ are zero, unless the symmetry of $\widetilde{R}$ prevents this. This shows that the image of $\widetilde{\mathcal V}^\perp$ under $\widetilde R (X,Y)$  is contained in $\R\cdot \del_v$. The symmetries of the curvature then imply part (ii) of the Theorem.
 
For the last part, first we note that, since $\widetilde{\mathcal V}$ is parallel, the holonomy algebra of $\widetilde g$ is contained in the stabilizer in $\mathfrak{so}(2,n)$ of the  totally null plane $\widetilde{\mathcal V}$, which is equal to 
\begin{equation}\label{holgl}
(\mathfrak{gl}_2\R\oplus \mathfrak{so}(n-2))\ltimes ( \R^2\otimes \R^{n-2}\oplus \R )
=
\left\{{\small
\begin{pmatrix}
X & Z & a J
\\[1mm]
0&S&-Z^\top
\\[1mm]
0&0 &-X^\top
\end{pmatrix}}
\mid
\begin{array}{l}
 X\in \mathfrak{gl}_2\R,\   a\in \R \\ Z\in \R^2\otimes \R^{n-2},\\
 S\in \mathfrak{so}(n-2)\end{array}\right\}.
\end{equation}
Moreover, the equations \eqref{nabpp} show that the $2$-form
$\mu=t dt\wedge du$ is parallel with respect to the Levi-Civita connection. Hence, the projection of the holonomy to $\mathfrak{gl}_2\R$ actually lies in $\mathfrak{sl}_2\R$, i.e., $X\in\mathfrak{sl}_2\R$. Note that $\R \cdot J$ commutes with all of 
$(\mathfrak{sl}_2\R\oplus \mathfrak{so}(n-2))\ltimes ( \R^2\otimes \R^{n-2})$, so the holonomy reduces to
\[\Big(\mathfrak{sl}_2\R\oplus \mathfrak{so}(n-2))\ltimes ( \R^2\otimes \R^{n-2})\Big)\oplus \R .\]
Finally, 
to show that the elements in the  holonomy algebra have no   $\mathfrak{so}(n-2)$-component, i.e., that $S=0$ in \eqref{holgl}, 
  we fix a point $p\in \widetilde{M}$ and use the Ambrose-Singer Holonomy Theorem to show that $\mathfrak{hol}_p(\widetilde{g})$ maps $\widetilde{\mathcal V}^\perp|_p$ to $\widetilde{\mathcal V}|_p$. Indeed, let 
$V\in \widetilde{\mathcal V}^\perp|_p$ be in the fibre of $\widetilde{\mathcal V}^\perp$ at $p$. Let $V_q=\mathcal P_\gamma(V)$ denote  the parallel transport  of $V$ along a curve that ends at $q\in \widetilde{M}$. By the invariance of the distribution $\widetilde{\mathcal V}^\perp$ we have  $V_q\in \widetilde{\mathcal V}^\perp|_q$, however applying 
the curvature at $q$, $\widetilde R_q(X,Y)$ for $X,Y\in T_q\widetilde M$, gives 
\[ \widetilde R_q(X,Y)V_q\ \in\  \mathcal V|_q\ \subset\ \widetilde{\mathcal V}|_q,\]
by the above formulae.
Since also $\widetilde{\mathcal V}$ is invariant under parallel transport, we obtain
\[\mathcal P^{-1}_\gamma\circ R(X,Y) \circ \mathcal P_\gamma (V)\in \widetilde{\mathcal V}|_p,\]
for every $V\in \widetilde{\mathcal V}^\perp|_p$.
Then  the Ambrose-Singer  Theorem provides us with the third statement.
\end{proof}
For a conformal pp-wave  defined by a function $H$, this theorem for $F=H+h$
gives an upper bound for the  holonomy algebra of the 
 the analytic ambient metric $\widetilde g$ in \eqref{anaambientpp} in the cases when it exists, e.g., when $n$ odd or when $\Delta^{\frac{n}{2}}H=0$. This upper bound  
 is an improvement of the result in \cite[Corollary 2]{leistner-nurowski08}. 
  The special structure of the ambient metric, in particular its Ricci flatness, might reduce the holonomy further. Indeed, for special  conformal pp-waves in Lorentzian signature, 
 such as plane waves or   Cahen-Wallach spaces, the ambient holonomy   reduces further: 
 \begin{proposition}
 Let $(S_{AB})_{i,j=1}^{n-2}$ be a symmetric  matrix of functions $S_{AB}=S_{AB}(u)$ with trace $S=S(u )$
 and let 
 \begin{equation}
 \label{plane-wave}
 g=2\der u\,\der v+2\left(S_{AB}(u)\, x^Ax^B\right) \der u^2+\delta_{AB}\der x^A\, dx^B
 \end{equation}
 be the corresponding Lorentzian plane wave metric on $\R^n$.
If $f=f(u)$ is  a solution to the ODE
 \[f''=(f')^2 -\tfrac{2}{n-2}S,\]
 then  the metric $\hat g=\mathrm{e}^{2f}g$ is Ricci-flat. Moreover, the metric
  \[\widetilde{g}= 2d(t\varrho )dt +t^2\mathrm{e}^{2f(u)} g\]
  is an ambient metric for the conformal class $[g]$
 and admits two parallel null vector fields $\frac{1}{t}\del_\varrho$ and 
 $\frac{1}{t}(\del_v+h\,\del_\varrho)$, where $h=h(u)$ is a solution to $h'=\mathrm{e}^{2f}$. Consequently, the holonomy algebra of $\widetilde{g}$ is contained in $ \R^2\otimes \R^{n-2}$.
 \end{proposition}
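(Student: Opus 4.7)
The argument splits into four independent tasks: Ricci-flatness of $\hat g$, identification of $\widetilde g$ as an ambient metric, existence of the two parallel null vector fields, and the holonomy bound. Throughout, it is convenient to pass to the coordinates $(t,\tau,u,v,x^A)$ with $\tau=\varrho t$, in which $\widetilde g=2\,dt\,d\tau+t^2\mathrm{e}^{2f(u)}g$ and the metric coefficients are manifestly independent of $\tau$ and of $v$.

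For the first two tasks I would apply the conformal-change formula for the Ricci tensor to $\hat g=\mathrm{e}^{2f}g$. Since $g$ is a plane wave with $H=S_{AB}x^Ax^B$, one has $\operatorname{Ric}(g)=-\Delta H\,du^2=-2S\,du^2$. Because $f=f(u)$ and $du$ is null and parallel for $g$ (so $g^{uu}=0$ and $\nabla du=0$), the conformal correction reduces to $(n-2)(df\otimes df-\nabla^2 f)=-(n-2)(f''-(f')^2)\,du\otimes du$, the $|df|^2$ and $\Delta f$ contributions vanishing. Thus $\operatorname{Ric}(\hat g)=\bigl[-2S-(n-2)(f''-(f')^2)\bigr]du^2$, which is zero precisely under the stated ODE. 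Since $\hat g\in[g]$ is then Ricci-flat, formula~\eqref{einstein-ambient} with $\Lambda=0$ identifies $\widetilde g$ as an ambient metric for $[g]$.

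For the parallel fields I would use the standard criterion that a vector field is parallel if and only if it is Killing and its metric dual is closed. The first field, $V_1=\tfrac{1}{t}\partial_\varrho=\partial_\tau$, is manifestly Killing (the metric is $\tau$-independent) with exact dual $V_1^\flat=dt$, hence parallel. For $V_2=\tfrac{1}{t}\partial_v+h(u)\partial_\tau$, I would split the Lie derivative using $\mathcal L_{\phi X}\widetilde g=d\phi\otimes i_X\widetilde g+i_X\widetilde g\otimes d\phi$ for Killing $X$: the contribution of $\tfrac{1}{t}\partial_v$ (with $i_{\partial_v}\widetilde g=t^2\mathrm{e}^{2f}du$) is $-\mathrm{e}^{2f}(dt\otimes du+du\otimes dt)$, while that of $h(u)\partial_\tau$ (with $i_{\partial_\tau}\widetilde g=dt$ and $dh=\mathrm{e}^{2f}du$) is $+\mathrm{e}^{2f}(du\otimes dt+dt\otimes du)$; the two cancel thanks to $h'=\mathrm{e}^{2f}$. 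Hence $V_2$ is Killing, and a direct evaluation gives $V_2^\flat=t\mathrm{e}^{2f}\,du+h\,dt=d(th)$, which is exact, so $V_2$ is parallel.

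Finally, direct inspection shows that $V_1,V_2$ are null and mutually orthogonal, so they span a parallel totally null $2$-plane $\mathcal V\subset T\widetilde M$. The holonomy preserves both individual sections of $\mathcal V$ and therefore lies in the subalgebra of the parabolic stabilizer of $\mathcal V$ that acts trivially on $\mathcal V$ itself, which kills the $\mathfrak{gl}_2\R$ Levi factor. Intersecting with the upper bound $(\mathfrak{sl}_2\R\ltimes(\R^2\otimes\R^{n-2}))\oplus\R$ provided by the previous theorem yields the claimed $\R^2\otimes\R^{n-2}$. The delicate step is the Lie-derivative cancellation for $V_2$, which crucially uses the ODE $h'=\mathrm{e}^{2f}$; the rest is structural, inherited from the preceding theorem.
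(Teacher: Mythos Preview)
Your arguments for the Ricci--flatness of $\hat g$, for the identification of $\widetilde g$ as an ambient metric via the Einstein formula \eqref{einstein-ambient} with $\Lambda=0$, and for the parallelism of $V_1,V_2$ are correct and pleasantly clean; the ``Killing with exact dual'' criterion is a nice alternative to computing Christoffel symbols. The paper itself only says ``straightforward computations'' here, so your approach is more explicit than the paper's for these parts.

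The gap is in the final holonomy step, and it is twofold. First, the previous theorem does \emph{not} apply to $\widetilde g=2d(\varrho t)\,dt+t^2\mathrm{e}^{2f}g$: that theorem is stated for metrics of the form $2d(\varrho t)\,dt+t^2\bigl(2du(dv+F\,du)+\delta_{AB}dx^Adx^B\bigr)$, and the factor $\mathrm{e}^{2f(u)}$ in front of $2du\,dv$ and $\delta_{AB}dx^Adx^B$ cannot be removed by the obvious coordinate changes without introducing $v$--dependence into $F$. So you cannot simply import its bound. Second, even granting that bound, your intersection is computed incorrectly: the pointwise stabiliser of $V_1$ and $V_2$ kills the $\mathfrak{gl}_2\mathbb R$ block but leaves the entire nilradical, including the central $\mathbb R$ spanned by $aJ$ (which acts trivially on $\mathcal V$). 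Intersecting $\mathfrak{so}(n-2)\ltimes\bigl((\mathbb R^2\otimes\mathbb R^{n-2})\oplus\mathbb R\bigr)$ with $(\mathfrak{sl}_2\mathbb R\ltimes(\mathbb R^2\otimes\mathbb R^{n-2}))\oplus\mathbb R$ therefore yields $(\mathbb R^2\otimes\mathbb R^{n-2})\oplus\mathbb R$, not $\mathbb R^2\otimes\mathbb R^{n-2}$.

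To close the gap you need a direct curvature argument for this specific $\widetilde g$. Using $\tau=\varrho t$ one finds that the only nonzero Christoffel symbols are $\widetilde\Gamma^k_{ij}=\hat\Gamma^k_{ij}$, $\widetilde\Gamma^\tau_{ij}=-t\hat g_{ij}$, $\widetilde\Gamma^k_{ti}=\tfrac1t\delta^k_i$, and hence $\widetilde R(X,Y)$ vanishes on $\partial_t,\partial_\tau$ and equals $\hat R(X,Y)$ on $TM$. Since $\hat R$ for the conformally rescaled plane wave has the same ``$\partial_u\!\to\!\partial_A\!\to\!\partial_v$'' nilpotent shape as the original plane wave, one can then run an Ambrose--Singer argument as in the proof of the previous theorem to obtain the claimed bound. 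That is the ``straightforward computation'' the paper alludes to; your structural shortcut does not quite reach it.
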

 This proposition is a generalisation of the corresponding statements 
 in \cite{leistner05a} about the conformal holonomy of plane waves. Its proof follows from straightforward computations and the well-known formula \eqref{einstein-ambient} for the ambient metric of Einstein metrics in the case of $\Lambda=0$. A class of examples to which this proposition applies are the symmetric Cahen-Wallach spaces for which the metric \eqref{plane-wave} is defined by a {\em constant} symmetric matrix $S_{AB}$.
\section{Generic 2-distributions in dimension 5 and their ambient metrics}
\label{secg2}
\subsection{Conformal structures and generic rank $2$ distributions in dimension $5$}

The examples of 5-dimensional conformal structures considered in Ref. \cite{leistner-nurowski09} belong to a wider class of conformal metrics naturally associated with \emph{generic rank 2 distributions in dimension 5}. The correspondence between these distributions and  conformal structures with signature $ (2,3)$ is explained in detail in \cite{nurowski04}. Here we recall this correspondence briefly for a particular subclass of these distributions:

Associated with a differential equation $$z'=F(x,y,p,q,z),$$
for real functions $y=y(x)$ and $z=z(x)$, where $p=y'$, 
$q=y''$,  there is a 5-manifold $M$ parametrized by $(x,y,p,q,z)$, and a distribution 
\be
{\mathcal D}={\rm Span}\Big(\partial_q,\partial_x+p\,\partial_y+q\,\partial_p+F\,\partial_z\Big).\label{F1}\ee
The distribution is {\em generic} if  $F_{qq}\neq 0$. For generic distributions the fundamental differential invariants of Cartan \cite{cartan10} are in one-to-one correspondence with \emph{conformal} invariants of a certain conformal class $[g_{\mathcal D}]$ of metrics of signature $(3,2)$ on $M$. In this section we will consider distributions and the corresponding conformal classes defined by 
\be\label{F}
F=q^2+f(x,y,p)+h(x,y)z,\ee where  $f$ and $h$ are smooth functions of their variables. For such $F$ we denote the distribution $\mathcal D$ by $\mathcal D_{f,h}$. The conformal class $[g_{\mathcal D_{f,h}}]$ may be represented by a metric 
\begin{eqnarray*}
g_{{\mathcal D}_{f,h}}
&=&
2\theta^1\theta^5-2\theta^2\theta^4+(\theta^3)^2,
\end{eqnarray*}
where the co-frame $\theta^i$ is given by
\begin{equation}
\label{omegas}
\begin{array}{rcl}
\theta^1&=&\der y-p\der x,\\
 \theta^2&=& \der z-(q^2+f+h z)\der x-\tfrac{\sqrt{2}}{2}q\theta^3,\\
 \theta^3&=&2\sqrt{2}(\der p-q\der x),\\
\theta^4&=&3\der x,\\
\theta^5&=&\tfrac{\sqrt{2}h}{2}\theta^3-6\der q+3(2h q+f_p)\der x+\tfrac{1}{10}\left(9f_{pp}+4h ^2-6(ph_y+h_x)\right)\theta^1,
\end{array}
\end{equation}
and where $f_p$ denotes the partial derivative $\del_p(f)$, $h_x=\del_x(h)$, etc.

In the examples we gave in \cite{leistner-nurowski09}, the function $f$ was just a polynomial of degree $6$ in $p$, and $h\equiv b$ was a constant. We were able to show that for a generic choice of $b$ and a generic choice of the coefficients of the polynomial defining $f$, the ambient metric  had the full exceptional Lie group  ${\bf G}_2$ as holonomy, thus obtaining an 8-parameter family of $\mathbf{G}_2$-metrics.
This   stimulated further research in \cite{graham-willse11}, where the authors showed that the conformal structures $[g_{\mathcal D}]$ associated with analytic generic rank $2$-distributions in dimension 5,  the  holonomy of the corresponding analytic ambient metric is {\em always}  contained in ${\bf G}_2$ and  \emph{generically}  equal to  ${\bf G}_2$.
Motivated by this result,
we  now produce more explicit examples of ambient metrics for the conformal structures associated with distributions ${\Cal D}_{f,h}$ with $f$ and $ h$ defining $F$ as in (\ref{F}). As a byproduct we will obtain a large class of explicit metrics with  holonomy equal to   $\mathbf{G}_2$, extending significantly our examples in \cite{leistner-nurowski09}. 
\subsection{An ansatz for the ambient metric, analytic solutions, and $\mathbf{G}_2$-ambient metrics}
\label{g2sec}
In
this section  we will find explicit formulae for all the Fefferman-Graham ambient metrics for  conformal classes $(M,[g_{{\mathcal D}_{f,h}}])$, with distribution ${\mathcal D}_{f,h}$ related to $F=q^2+f(x,y,p)+h(x,y)z$ via  
\eqref{F1}.
Since conformal structures $[g_{{\mathcal D}_{f,h}}]$ associated with distributions ${\mathcal D}_{f,h}$ are  defined in  \emph{odd} dimension $n=5$, 
the result by Fefferman-Graham implies that  there is a unique analytic ambient metric $\tilde{g}_{{\mathcal D}_{f,h}}$ for each pair of \emph{analytic} functions $f$ and $h$. In order to find this ambient metric,   
we start with the following observation which parallels what we   found previously
for pp-waves:
A direct computation shows that 
the Schouten tensor for the class $[g_{{\mathcal D}_{f,h}}]$ has the form
\[P=\alpha\,(\theta^1)^2+2\beta\,\theta^1\theta^4+\gamma\, (\theta^4)^2,\] with $\theta^1$ and $\theta^4$ as in \eqref{omegas}, and the functions $\alpha$, $\beta$ and $\gamma$ determined by $f$ and $h$ and their 
derivatives:
\begin{equation}
\label{schouten}
\alpha=-\tfrac{3}{80}f_{pppp},\quad 
\beta=-\tfrac{1}{80}(f_{ppp}+6h_y)
,\quad 
\gamma= -\tfrac{1}{15}\left(\tfrac{1}{2}(h_x+ph_y) +\tfrac{1}{12}f_{pp}-\tfrac{1}{3}	h^2\right)
\end{equation}
Hence, as for pp-waves, we make an \emph{ansatz} for the ambient metric $\tilde{g}_{{\mathcal D}_{f,h}}$ in which $g_{{\mathcal D}_{f,h}}(x^i,\varrho)$ assumes a similar form to  $P$. Explicitly, we make the following ansatz for $\tilde{g}_{{\mathcal D}_{f,h}}$:
\be
\tilde{g}_{{\mathcal D}_{f,h}}\ =\ 2\der t\der(\varrho t)+t^2g_{{\mathcal D}_{f,h}}(x^i,\varrho)\ =\ 
2\der t\der(\varrho t)+t^2\big(g_{{\mathcal D}_{f,h}}+A\, (\theta^1)^2+2B\, \theta^1\theta^4+C\,(\theta^4)^2\big),
\label{amf}
\ee
with \emph{unknown} functions $A=A(x,y,p,\varrho)$, $B=B(x,y,p,\varrho)$ and $C=C(x,y,p,\varrho)$.
Miraculously, for this ansatz,  the equations for $Ric( \tilde{g}_{{\mathcal D}_{f,h}})=0$  form a system of PDEs which  are {\em linear} in the unknowns $A,B,C$:
\begin{theorem}\label{linsys1}
The metric $\tilde{g}_{{\mathcal D}_{f,h}}$ in \eqref{amf} is an ambient metric for the conformal class $(M,[g_{{\mathcal D}_{f,h}}])$ if and only if the unknown functions $A=A(x,y,p,\varrho)$, $B=B(x,y,p,\varrho)$ and $C=C(x,y,p,\varrho)$ satisfy the initial conditions
$A_{|\varrho=0}\equiv 0$, $ B_{|\varrho=0}\equiv 0$, $ C_{|\varrho=0}\equiv 0$, and the following system of PDEs:
\be
\begin{array}{rcl}
LA&=&\tfrac{9}{40}f_{pppp}\\[2mm]
LB&=&-\tfrac{1}{36}A_p+\tfrac{3}{40}f_{ppp}+\tfrac{9}{20}h_y\\[2mm]
LC&=&-\tfrac{1}{18}B_p+\tfrac{1}{324}A+\tfrac{1}{30}f_{pp}-\tfrac{2}{15}h^2+\tfrac{1}{5}( ph_y+h_x),
\end{array} \label{2sys}
\ee
where the \emph{linear} operator $L$ is given by
$$
L=2\varrho\frac{\partial^2}{\partial\varrho^2}-3\frac{\partial}{\partial\varrho}-\tfrac{1}{8}\frac{\partial^2}{\partial p^2}.
$$ 
\end{theorem}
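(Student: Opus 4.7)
My plan is to compute the Ricci tensor of the ansatz \eqref{amf} directly and show that the vanishing of $\operatorname{Ric}(\tilde g_{{\mathcal D}_{f,h}})$ is equivalent to the system \eqref{2sys} together with the stated initial conditions. The first step is to record the key structural properties of the deformation $k := A(\theta^1)^2 + 2B\,\theta^1\theta^4 + C(\theta^4)^2$, which mirror those of the Schouten tensor $P$ in \eqref{schouten}. From $g_{{\mathcal D}_{f,h}} = 2\theta^1\theta^5 - 2\theta^2\theta^4 + (\theta^3)^2$ one reads off that the dual coframe satisfies $g_{{\mathcal D}_{f,h}}^{ab} = 0$ for $a,b \in \{1,4\}$, i.e.\ $\theta^1,\theta^4$ span a totally null plane in $T^*M$. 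Consequently $k$ is $g_{{\mathcal D}_{f,h}}$-trace-free and is two-step nilpotent as a $(1,1)$-tensor; the same holds for $\partial_\varrho k$, and mixed products $k\,g_{{\mathcal D}_{f,h}}^{-1}\,\partial_\varrho k$ also vanish. Hence $g(x^i,\varrho) := g_{{\mathcal D}_{f,h}} + k$ has inverse $g^{ij} = g_{{\mathcal D}_{f,h}}^{ij} - k^{ij}$ \emph{exactly}, and $\det g(x^i,\varrho) = \det g_{{\mathcal D}_{f,h}}$.

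Next, I would substitute this into the Fefferman--Graham system \cite[eq.~3.17]{fefferman-graham07}, which is quasilinear in $g(x^i,\varrho)$ with principal part $\varrho\,\partial_\varrho^2 + (1-\tfrac{n}{2})\partial_\varrho$, source $\operatorname{Ric}_{ij}(g)$, and nonlinear corrections built from the traces $g^{kl}g'_{kl}$ and the quadratic terms $g^{kl}g'_{ik}g'_{jl}$. The algebraic identities of the previous step annihilate all these nonlinearities. Moreover, the would-be quadratic-in-$k$ contributions to the Christoffel symbols, proportional to $g_{{\mathcal D}_{f,h}}^{-1}k\,g_{{\mathcal D}_{f,h}}^{-1}(\partial k)$, reinject into $\operatorname{Ric}_{ij}$ only through combinations of the form $k\,g_{{\mathcal D}_{f,h}}^{-1}k$, which also vanish. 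Thus $\operatorname{Ric}_{ij}(g_{{\mathcal D}_{f,h}} + k)$ depends \emph{linearly} on $k$, and the entire Fefferman--Graham system reduces to a linear second-order PDE system on the triple $(A,B,C)$.

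With $n=5$ the $\varrho$-derivative operator becomes $2\varrho\,\partial_\varrho^2 - 3\,\partial_\varrho$, and the inhomogeneity is supplied by $\operatorname{Ric}(g_{{\mathcal D}_{f,h}}) = 3P$ with $P$ as in \eqref{schouten}. Projecting onto the three independent directions $(\theta^1)^2$, $\theta^1\theta^4$, $(\theta^4)^2$ yields three scalar equations; the linearised spatial Ricci term, after exploiting the null structure of $\theta^1,\theta^4$ and the $(q,z)$-independence of $A,B,C$, collapses to the single second derivative $-\tfrac{1}{8}\partial_p^2$ (traceable to the coframe piece $\theta^3 = 2\sqrt{2}(\der p - q\,\der x)$), together with the first-order couplings $-\tfrac{1}{36}A_p$ and $-\tfrac{1}{18}B_p$ and the algebraic shift $\tfrac{1}{324}A$ in the $C$-equation that produce the triangular form of \eqref{2sys}. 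The initial conditions $A|_{\varrho=0} = B|_{\varrho=0} = C|_{\varrho=0} = 0$ encode $g(x^i,0) = g_{{\mathcal D}_{f,h}}$, and evaluating \eqref{2sys} at $\varrho=0$ (where $A,B,C$ and $A_{pp},B_{pp}$ all vanish) automatically enforces the Fefferman--Graham recursion relation $g'(x^i,0) = 2P$.

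The main obstacle is the honest verification in the third paragraph: confirming that the spatial Ricci tensor of $g_{{\mathcal D}_{f,h}} + k$ truly reduces to exactly the triangular combination of $\partial_p^2$, $\partial_p$ and algebraic terms displayed in \eqref{2sys}, with the precise rational coefficients $\tfrac{9}{40},\tfrac{1}{36},\tfrac{1}{324}$ etc. This is a voluminous but mechanical computation in the coframe \eqref{omegas}, most efficiently handled by computer algebra, for which the authors' reference to the Maple \emph{DifferentialGeometry} package is ideally suited.
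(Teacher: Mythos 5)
Your proposal is correct and follows essentially the same route as the paper, which likewise proves the theorem by a direct computation of $\Ric(\tilde{g}_{{\mathcal D}_{f,h}})$ for the ansatz \eqref{amf} (carried out with the Maple \emph{DifferentialGeometry} package) and a comparison with the Schouten tensor \eqref{schouten}. Your preliminary structural observations --- that $\theta^1,\theta^4$ span a totally null plane, so the deformation is trace-free and nilpotent, the inverse metric and volume form are exact, and the Fefferman--Graham nonlinearities drop out --- are a welcome a priori explanation of the linearity that the paper only records as a ``miraculous'' outcome of the computation, and your check that evaluating \eqref{2sys} at $\varrho=0$ reproduces $g'|_{\varrho=0}=2P$ is consistent with \eqref{schouten}.
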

The proof of this theorem 
proceeds by  directly computing the Ricci-tensor 
$Ric( \tilde{g}_{{\mathcal D}_{f,h}})$, using the formulae \eqref{schouten} for 
 the Schouten tensor of $g_{{\mathcal D}_{f,h}}$.

In oder  to find general formal solutions for the linear system \eqref{1sys} we assume the power series expansions of the unknowns
$A=A(x,y,p,\varrho)$, $B=B(x,y,p,\varrho)$ and $C=C(x,y,p,\varrho)$, i.e,
$$A=\sum_{k=1}^\infty a_k(x,y,p)\varrho^k,\quad B=\sum_{k=1}^\infty b_k(x,y,p)\varrho^k,\quad C=\sum_{k=1}^\infty c_k(x,y,p)\varrho^k.$$
We describe the solutions in the following Theorem. The proof that these give all analytic solutions is analogous to the pp-wave case.

\begin{theorem}\label{as}
The most general solution to the linear system (\ref{2sys}), vanishing at $\varrho=0$ and \trev{being analytic in $\varrho$ whenever $f$ is analytic,}  is given by
 \beqn
A&=&\frac{3}{5} \sum_{k=1}^\infty\frac{(2k-1)(2k-3)}{2^{2k}(2k)!}\,\frac{\partial^{(2k+2)}f}{\partial p^{(2k+2)}}\, \varrho^k,\\
B&=&
-\frac{3}{20}\varrho h_y
-\frac{1}{15}\sum_{k=1}^\infty\frac{(2k-1)(2k-3)(2k-5)}{2^{2k}(2k)!}\,\frac{\partial^{(2k+1)}f}{\partial p^{(2k+1)}}\, \varrho^k,\\
C&=&
\frac{2}{45}\varrho h^2
-\frac{1}{15}\varrho (ph_y+h_x)
+
\frac{2}{135} \sum_{k=1}^\infty\Big(\frac{(k-{ 3})(2k-1)(2k-3)(2k-5)}{2^{2k}(2k)!}\,\frac{\partial^{2k}f}{\partial p^{2k}}\Big)\, \varrho^k.
\eeqn
\end{theorem}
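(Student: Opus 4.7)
The plan is to exploit the triangular structure of the linear system (\ref{2sys}): the first equation determines $A$ from $f$ alone, the second uses only the already-known $A_p$ together with $f$ and $h$, and the third uses $B_p$, $A$, and the data $f,h$. So one solves in order for $A$, $B$, $C$ by substituting the formal power-series ansatz
\[
A=\sum_{k\ge 1} a_k(x,y,p)\,\varrho^k,\qquad B=\sum_{k\ge 1} b_k(x,y,p)\,\varrho^k,\qquad C=\sum_{k\ge 1} c_k(x,y,p)\,\varrho^k,
\]
and matching coefficients of $\varrho^{k-1}$ in each equation.

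A direct computation gives $L\bigl(\sum_k a_k\varrho^k\bigr) = \sum_{k\ge 1}k(2k-5)\,a_k\,\varrho^{k-1} - \tfrac{1}{8}\sum_{k\ge 2}(a_{k-1})_{pp}\,\varrho^{k-1}$. Matching against the constant source $\tfrac{9}{40}f_{pppp}$ fixes $a_1 = -\tfrac{3}{40}f_{pppp}$, agreeing with the $k=1$ term of the stated formula, and for $k\ge 2$ yields the pure recursion $a_k = (a_{k-1})_{pp}/[8k(2k-5)]$. An induction, needing only the identity $(2k)!=(2k)(2k-1)(2k-2)!$, then shows that the stated closed form $a_k=\tfrac{3}{5}\tfrac{(2k-1)(2k-3)}{2^{2k}(2k)!}\partial_p^{2k+2}f$ satisfies this recursion; the shift from $\partial_p^{2k}f$ to $\partial_p^{2k+2}f$ under two $p$-derivatives matches the exponent convention in the answer.

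The same procedure handles $B$ and $C$ in turn. For $B$ the $\varrho^0$ coefficient gives $b_1=-\tfrac{3}{20}h_y-\tfrac{1}{40}f_{ppp}$, and for $k\ge 2$ one has $k(2k-5)b_k = \tfrac{1}{8}(b_{k-1})_{pp} - \tfrac{1}{36}(a_{k-1})_p$, with no further source terms because $f_{ppp}$ and $h_y$ are $\varrho$-independent. Substituting the established closed form for $a_{k-1}$ into $(a_{k-1})_p$ and inducting again produces the stated expression for $B$. Similarly, for $C$ the recursion at $k\ge 2$ reads $k(2k-5)c_k = \tfrac{1}{8}(c_{k-1})_{pp} - \tfrac{1}{18}(b_{k-1})_p + \tfrac{1}{324}a_{k-1}$, and a third induction yields the claimed formula; the $h^2$ and $(ph_y+h_x)$ contributions appear only at order $\varrho^1$ because they are independent of $p$.

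For uniqueness within the analytic class, the indicial polynomial of $L$ at $\varrho=0$ (treating $p$-derivatives as coefficients) is $\lambda(2\lambda-5)$, with roots $\lambda=0$ and $\lambda=5/2$. Since the nonzero root is not a positive integer, the recursions above determine every formal power-series solution with $A(0)=B(0)=C(0)=0$ uniquely; the only other homogeneous modes live in the $\varrho^{5/2}$-sector, which is not analytic. Convergence of the formal series whenever $f$ is analytic follows from the Fefferman--Graham existence theorem recalled in the introduction. I expect the main obstacle to be purely combinatorial: propagating the three distinct shifts of the index of $\partial_p^j f$ through the three inductions and verifying that the constants $\tfrac{(2k-1)(2k-3)}{2^{2k}(2k)!}$ and their analogues for $B$ and $C$ telescope correctly.
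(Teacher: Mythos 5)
Your proposal is correct and takes essentially the same route as the paper: the paper introduces exactly the power-series ansatz $A=\sum a_k\varrho^k$, etc., immediately before the theorem, derives the coefficients from the resulting triangular recursions, and settles uniqueness of the analytic solution by the same observation that the indicial exponents of $L$ are $0$ and $5/2$ (so the only further homogeneous modes sit in the non-analytic $\varrho^{5/2}$-sector, cf.\ Theorem \ref{nasol}). Your recursions $k(2k-5)a_k=\tfrac18(a_{k-1})_{pp}$, $k(2k-5)b_k=\tfrac18(b_{k-1})_{pp}-\tfrac1{36}(a_{k-1})_p$, $k(2k-5)c_k=\tfrac18(c_{k-1})_{pp}-\tfrac1{18}(b_{k-1})_p+\tfrac1{324}a_{k-1}$ and the initial coefficients all check out against the stated closed forms.
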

Our next goal is to  determine explicitly the parallel $3$-form that gives the  reduction to  $\mathbf{G}_2$ of the  holonomy of the ambient metric in \eqref{amf}. 
Such a  holonomy reduction imposes conditions on the connection and hence implies first order conditions on the metric coefficients.
Therefore, in order to find the $3$-form, we have to identify these first order conditions. 
Our approach will be as follows: 
From \cite{graham-willse11} we know that for analytic functions  $f$ and $h$
 the \trev{unique analytic} ambient metric will admit a parallel $3$-form. This means that the above analytic solutions  to the second order system \eqref{2sys}, which give us  the unique analytic Ricci flat ambient metric, will in turn, via the  induced holonomy reduction,  be the solutions to some first order equations on the functions $A$, $B$ and $C$. It turns out that the first order system we will find in this way  actually  implies the second order  equations \eqref{2sys} as their integrability conditions.

\begin{theorem}\label{1systheo}
Let $f=f(x,y,p)$ and $h=h(x,y)$ be two smooth functions. Then the 
 \trev{formal} solutions $A$, $B$ and $C$ 
of the second order system  \eqref{2sys}  
solve  
 the following first order system
\begin{equation}\label{1sys}
\begin{array}{rcl}
B_p
&=& \frac{5}{9}A-\frac{2}{9}\varrho A_\varrho 
\\[2mm]
B_\varrho &=& -\frac{1}{72}A_p - \frac{1}{40}f_{ppp} -\frac{3}{20}h_y
\\[2mm]
C_\varrho &=& \frac{1}{648} A - \frac{1}{72}B_p - \frac{1}{90} f_{pp}
+ \frac{2}{45}h^2 -\frac{1}{15}(ph_y+h_x)
\\[2mm]
C_p &=&  \frac{1}{324}\varrho A_p  +  \frac{2}{3}B + \frac{1}{180}\varrho f_{ppp}
+\frac{1}{30}\varrho h_y.
\end{array}
\end{equation}
Moreover, 
\trev{twice  differentiable  solutions}  $A$, $B$ and $C$ of this  first  order system \eqref{1sys} are  solutions to the   second order system \eqref{2sys}.
\end{theorem}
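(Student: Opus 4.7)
The proof naturally splits into two independent implications.

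For the forward direction, that the explicit formal solutions of Theorem \ref{as} satisfy the first-order system \eqref{1sys}, I would proceed by direct substitution. Writing the three series
\[
A=\sum_{k\ge 1}a_k\,\varrho^k,\quad B=\sum_{k\ge 1}b_k\,\varrho^k,\quad C=\sum_{k\ge 1}c_k\,\varrho^k
\]
with $a_k,b_k,c_k$ as in Theorem \ref{as}, each of the four equations in \eqref{1sys} becomes, at every order $\varrho^k$, an algebraic identity among ratios built from the numerical factors $(2k-1)(2k-3)(2k-5)/(2^{2k}(2k)!)$ and $\partial_p$-derivatives of $f$ (together with $h,h_x,h_y$ at order $k=1$). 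These identities reduce, after elementary simplification, to true statements and can be verified one coefficient at a time.

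For the backward direction, the plan is to derive \eqref{2sys} from \eqref{1sys} purely by differentiation and linear combination, using only that $A,B,C$ are $C^2$. The first step is to apply $\partial_\varrho$ to $B_p=\tfrac{5}{9}A-\tfrac{2}{9}\varrho A_\varrho$ and $\partial_p$ to $B_\varrho=-\tfrac{1}{72}A_p-\tfrac{1}{40}f_{ppp}-\tfrac{3}{20}h_y$, equate the two resulting expressions for $B_{p\varrho}$ (noting that $h_{yp}=0$ since $h=h(x,y)$), and simplify; this gives
\[
\tfrac{1}{3}A_\varrho-\tfrac{2}{9}\varrho A_{\varrho\varrho}=-\tfrac{1}{72}A_{pp}-\tfrac{1}{40}f_{pppp},
\]
equivalently $LA=\tfrac{9}{40}f_{pppp}$, the first equation of \eqref{2sys}. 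Differentiating the two $B$-equations once more and assembling $2\varrho B_{\varrho\varrho}-3B_\varrho-\tfrac{1}{8}B_{pp}$ produces the second equation of \eqref{2sys}. For $LC$ one similarly combines derivatives of the two $C$-equations of \eqref{1sys} and uses the relation $2\varrho A_{\varrho\varrho}=3A_\varrho+\tfrac{1}{8}A_{pp}+\tfrac{9}{40}f_{pppp}$ (just derived) to eliminate the second $\varrho$-derivative of $A$ wherever it occurs.

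The main obstacle is the $LC$ computation. It touches every equation in \eqref{1sys} and depends on precise cancellations among the specific rational coefficients ($\tfrac{1}{648}$, $\tfrac{1}{324}$, $\tfrac{2}{45}$, etc.); a missing sign or factor of $3$ will break the identity. Beyond that, however, the calculation is entirely mechanical, and I would carry it out using the Maple \emph{DifferentialGeometry} package mentioned in the introduction to avoid transcription errors.
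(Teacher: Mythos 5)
Your proposal is correct and follows essentially the same route as the paper's own proof: the forward direction by direct verification of the formal solutions from Theorem \ref{as}, and the backward direction by extracting $LA=\tfrac{9}{40}f_{pppp}$ as the integrability condition $B_{p\varrho}=B_{\varrho p}$ of the first two equations, then obtaining $LB$ from the two $B$-equations and $LC$ from all four. The only cosmetic difference is that the paper also records that the integrability condition $C_{p\varrho}=C_{\varrho p}$ is automatically implied by the $B$-equations, which you do not mention but do not need.
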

\begin{proof}
It is a matter of checking that the \trev{formal} solutions in Theorem \ref{as} of  the second order system \eqref{2sys}  satisfy the first order system \eqref{1sys}.

In order to derive the second order equations from the first order system,  we first use the integrability condition arising from the first two equations in  \eqref{1sys}. We immediately compute
\[
0=
B_{\varrho p}-B_{p \varrho}
=
\tfrac{1}{9}\left( L(A) -\tfrac{9}{40}f_{pppp}\right),
\]
i.e., the first equation in \eqref{2sys} is exactly this  integrability condition. 
Note that the integrability condition arising from  the last two equations, $C_{p \varrho}- C_{\varrho p}=0$, immediately {\em follows}  from the first two equations in \eqref{1sys}:
\begin{align*}
C_{p \varrho}- C_{\varrho p}\
&=\ 
\tfrac{1}{648} A_p + \tfrac{1}{324}\varrho A_{p\varrho} +\tfrac{1}{60}f_{ppp}+\tfrac{1}{10}h_y
+\tfrac{1}{72}B_{pp} +\tfrac{2}{3}B_\varrho
\ =\ 0,
\end{align*}
when substituting $B_{pp}=\tfrac{5}{9}A_p-\tfrac{2}{9}\varrho A_{\varrho p}$ and 
 $B_\varrho=-\tfrac{1}{72}A_p -\tfrac{1}{40}f_{ppp} -\tfrac{3}{20}h_y$ in the last two terms.
As the first derivatives of $B$ and $C$ in the variables $p$ and $\varrho$ are explicitly given in 
\eqref{1sys}, one checks that
the second equation in \eqref{2sys} can be derived directly from the first two equations in \eqref{1sys}, whereas the third equation in \eqref{2sys} requires all four equations in \eqref{1sys} for its derivation.
\end{proof}
\begin{remark}\label{na-rem}
In Section \ref{nona-sec}, when  determining {\em all} \trev{formal} solutions to the second order system \eqref{2sys}, we will see that \eqref{2sys} has  solutions, \trev{defined on $\varrho\ge 0$ and only twice differentiable} which do not solve the first order system \eqref{1sys}, see Remark \ref{1sysrem}.
\end{remark}
\begin{remark} 
Note that for a set of  solutions $A$, $B$, $C$ of \eqref{1sys}, the functions $B$ and $C$ are uniquely determined by the function $A$ and the initial conditions.

\end{remark}

The first order system (\ref{1sys}) enables us to find the parallel $3$-form that defines the $\mathbf{G}_2$-reduction of the ambient metric.
To this end we define 
a null co-frame for the ambient metric \eqref{amf}
by
\begin{equation}
\label{g2nullframe}
\begin{array}{l}
\omega^1 
= 9 \sqrt{2} dt + t\sqrt{2} h\,\theta^4,
\qquad
\omega^2  
 = \theta^1, 
\qquad
\omega^3
 = -\frac{1}{9}t\, h( \varrho dt + t d\varrho)  -t^2 \theta^2 + \frac{1}{2}\,t^2\,C\, \theta^4,
 \\[2mm]
\omega^4 
= t\theta^3,
\qquad
\omega^5 
= \theta^4,
\qquad
\omega^6 
= \frac{1}{2}\:t\,A \, \theta^1 + t^2 B\, \theta^4 +t^2 \theta^5,
\qquad
\omega^7 
=  \frac{\sqrt{2}}{18}( \varrho dt +   t d\varrho  ),
\end{array}
\end{equation}
with the $\theta^i$'s given in the equations \eqref{omegas}. We obtain:
\begin{theorem}
Let $\mathcal D_{f,h}$ be a generic distribution on $\R^5$ defined by 
\[
F=q^2+f(x,y,p)+h(x,y)z,\] where  $f$ and $h$ are smooth functions of their variables, and let  $\left[g_{{\mathcal D}_{f,h}}\right]$ be the corresponding conformal class.  Let 
$A=A(x,y,p,\varrho)$, $B=B(x,y,p,\varrho)$ and $C=C(x,y,p,\varrho)$ be twice differentiable solutions of the  first order system \eqref{1sys} associated to $f$ and $h$, with initial conditions $A|_{\varrho=0} =B|_{\varrho=0}=C|_{\varrho=0}=0$. Then, in the frame defined in \eqref{g2nullframe}, an ambient metric 
is given as
\begin{equation}\label{FG-omega}
\tilde{g}_{{\mathcal D}_{f,h}}=2\omega^1\omega^7 +2\omega^2\omega^6+2\omega^3\omega^5 + (\omega^4)^2,\end{equation}
with a parallel 
 $3$-form 
\begin{equation}\label{3form-omega}
\Upsilon = 2 \omega^{123}- \omega^{147} -
\omega^{246}  - \omega^{345} + \omega^{567},
\end{equation}
where we use the standard notation $\omega^{ijk}=\omega^i\wedge\omega^j\wedge\omega^k$. 

Moreover, if \trev{$f$ is analytic and we take $A$, $B$ and $C$ to be the solutions 
 of the system \eqref{1sys} given in Theorem \ref{as} and being
 analytic in $\varrho$,   then the metric in \eqref{FG-omega} is the unique  ambient metric that is analytic in $\varrho$.}
\end{theorem}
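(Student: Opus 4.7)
My plan is to verify the metric identity algebraically, prove the parallelism of $\Upsilon$ via a connection computation, and then invoke the Fefferman-Graham uniqueness result for odd dimensions.

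\textbf{Step 1: the metric identity.} Substituting the definitions of $\omega^1, \ldots, \omega^7$ from \eqref{g2nullframe} into the right-hand side $2\omega^1\omega^7 + 2\omega^2\omega^6 + 2\omega^3\omega^5 + (\omega^4)^2$ of \eqref{FG-omega}, and using $g_{\mathcal{D}_{f,h}} = 2\theta^1\theta^5 - 2\theta^2\theta^4 + (\theta^3)^2$, one reproduces the right-hand side of \eqref{amf}. The $h$-dependent contributions sitting inside $\omega^1$ and $\omega^3$ cancel pairwise between the products $2\omega^1\omega^7$ and $2\omega^3\omega^5$, while the $A, B, C$ pieces come out of $2\omega^2\omega^6$ and $2\omega^3\omega^5$. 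This step is purely mechanical bookkeeping.

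\textbf{Step 2: $\tilde\nabla\Upsilon = 0$.} I would compute the exterior derivatives $d\omega^i$ by differentiating the expressions in \eqref{g2nullframe}, treating $A, B, C$ as functions of $(x, y, p, \varrho)$ and using the $d\theta^j$ that follow from \eqref{omegas}. Torsion-freeness $d\omega^i = -\omega^i{}_j \wedge \omega^j$ together with the skew-symmetry forced by the metric determines the Levi-Civita connection forms $\omega^i{}_j$ uniquely. Since the $3$-form \eqref{3form-omega} is the standard split $\mathbf{G}_2$-invariant form on $\mathbb{R}^{4,3}$, the vanishing $\tilde\nabla\Upsilon = 0$ is equivalent to the matrix $(\omega^i{}_j)$ taking values in the subalgebra $\mathfrak{g}_2 \subset \mathfrak{so}(4,3)$, which is cut out by an explicit set of linear identities on the matrix entries.

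The hard part, and where the first-order system \eqref{1sys} enters decisively, is matching these $\mathfrak{g}_2$-constraints with the given equations. I expect the derivatives $B_p, B_\varrho, C_p, C_\varrho$ to appear in specific entries of $(\omega^i{}_j)$, so that the $\mathfrak{g}_2$-conditions force exactly the four equations of \eqref{1sys}; conversely, once \eqref{1sys} is assumed, all $\mathfrak{g}_2$-constraints are met and $\Upsilon$ is parallel. The computation is lengthy but systematic, and its internal consistency with $d^2 = 0$ is guaranteed by Theorem \ref{1systheo}, which asserts that the integrability conditions of \eqref{1sys} are precisely the Ricci-flat equations \eqref{2sys}. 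The main obstacle is therefore organisational: tracking the many entries of $(\omega^i{}_j)$ and correctly identifying which $\mathfrak{g}_2$-constraint corresponds to each equation of \eqref{1sys}.

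\textbf{Step 3: uniqueness.} Since $n = 5$ is odd, the Fefferman-Graham theorem recalled in the introduction furnishes a unique ambient metric for $[g_{\mathcal{D}_{f,h}}]$ that is analytic in $\varrho$. By Theorem \ref{as} the analytic solutions $A, B, C$ of \eqref{2sys} are given by explicit convergent power series, and by Theorem \ref{1systheo} these also satisfy \eqref{1sys}. Hence \eqref{FG-omega} built from these $A, B, C$ coincides with the unique analytic ambient metric, completing the proof.
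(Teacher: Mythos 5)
Your proposal follows essentially the same route as the paper: the parallelism of $\Upsilon$ is established by a direct computation of $\widetilde{\nabla}\Upsilon$ in which the first derivatives of $A$, $B$, $C$ are eliminated using the first order system \eqref{1sys}, and the analytic case is settled by Fefferman--Graham uniqueness in odd dimension together with Theorems \ref{as} and \ref{1systheo}. The one item you leave implicit is the Ricci-flatness needed for \eqref{FG-omega} to be an \emph{ambient} metric; the paper gets it by combining Theorem \ref{1systheo} (twice differentiable solutions of \eqref{1sys} solve \eqref{2sys}) with Theorem \ref{linsys1}, although it would also follow from your Step 2, since a metric carrying a parallel $3$-form with stabilizer $\mathbf{G}_2$ has holonomy contained in $\mathfrak{g}_2$ and is therefore automatically Ricci-flat.
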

\begin{proof}
The Ricci-flatness  of $\widetilde{g}_{D_{f,h}}$ as in \eqref{amf} follows from Theorems \ref{linsys1} and \ref{1systheo}. 
The covariant derivative $\widetilde{\nabla} \Upsilon$ with respect to the Levi-Civita connection $\widetilde{\nabla} $ of 
$\tilde{g}_{{\mathcal D}_{f,h}}$ in \eqref{amf} contains  derivatives of $f$ and $h$ as well as first derivatives 
of $A$, $B$ and $C$. Computing $\widetilde{\nabla} \Upsilon$ and substituting the first derivatives 
of $A$, $B$ and $C$ with the help of the first order system \eqref{1sys} shows that $\widetilde{\nabla} \Upsilon=0$.
\end{proof}
\trev{
\begin{remark}
Note that this theorem not only gives us the unique analytic solution to the Fefferman-Graham equations but, in principle, also applies also to  possible non-smooth solutions to the first order system \eqref{1sys}. More generally, leaving the context of conformal structures aside, it provides a tool for constructing  metrics with holonomy contained in $\mathbf{G}_2$ from solutions of the first order system  \eqref{1sys}. Below in Theorem \ref{suffcond} we will see what one has to assume for the solutions  in order to ensure that such metrics have holonomy equal to $\mathbf{G}_2$.
\end{remark}}
For completeness, we recall that there is a one-to-one correspondence between a 3-form making the reduction from $\sog(4,3)$ to ${\bf G}_2$ and a spinor. Using the formula governing this correspondence (see e.g. \cite{Kath98g2} or \cite[pp. 430]{leistner-nurowski09}) we now find the spinor $\psi$ corresponding to the 3-form $\Upsilon$. This needs some preparations:

Since the spinor $\psi$ takes the most convenient form in an orthonormal frame, we pass from the null co-frame $(\omega^i)$ to a co-frame $\xi^i$ given by
\begin{alignat*}{4}
\xi^1 &= \omega^1 + \frac12 \omega^7, 
&\quad
\xi^2 &=\frac{\sqrt{2}}{2} \omega^2 +  \frac{\sqrt{2}}{2} \omega^6,
&\quad
\xi^3 & = \frac{\sqrt{2}}{2} \omega^3 +  \frac{\sqrt{2}}{2} \omega^5,
&\quad
\xi^4 &=  \omega^4,
\\
\xi^5 & = \frac{\sqrt{2}}{2} \omega^3 -  \frac{\sqrt{2}}{2} \omega^5,
&\quad
\xi^6 & = \frac{\sqrt{2}}{2} \omega^2 -  \frac{\sqrt{2}}{2} \omega^6,
&\quad
\xi^7 &= \omega^1 - \frac12 \omega^7.
\end{alignat*}
This co-frame is orthonormal for $\widetilde{g}_{f,h}$,
\[\widetilde{g}_{f,h}\ =  \widetilde{g}_{ij} \xi^i\xi^j\ =\  (\xi^1)^2 +(\xi^2)^2+(\xi^3)^2 +(\xi^4)^2 -(\xi^5)^2 -(\xi^6)^2-(\xi^7)^2,
\]
and the parallel 3-form \eqref{3form-omega} becomes\begin{equation*}
\Upsilon = \xi^{125} - \xi^{136} + \xi^{147} +\xi^{237} +\xi^{246} +\xi^{345} - \xi^{567}.
\end{equation*}
To find  the formula for the spinor $\psi$, we need a representation of the Clifford algebra $\mathrm{Cl}(4,3)$. To this end we introduce real $8\times 8$  matrices $\s_i$, called the $\s$-matrices, satisfying the relation
\[
\s_i\s_j + \s_j\s_i = 2 \widetilde{g}_{ij}\mathbb{I}_8.\]
They generate the Clifford algebra $\mathrm{Cl}(4,3)$ acting on the vector space $\bbR^8$ of real spinors. Following \cite[pp. 430]{leistner-nurowski09} we write the $\s$-matrices as\footnote{In fact, the $\s$-matrices come, as a special case,  from A.~Trautman's inductive construction of real representations for $\mathrm{Cl}(g)$ in case of  metrics $g$ with split signature. This construction can be found in 
 \cite{Trautman06}.}
\[\begin{array}{cccc}
\s_{1}= \left(\begin{array}{cc}
0&\gamma_1 \\ \gamma_1 & 0
\end{array}\right),&
\s_{2}= \left(\begin{array}{cc}
0&\gamma_3\\\gamma_3&0
\end{array}\right),
&
\s_3= \left(\begin{array}{cc}
0&\gamma_5\\\gamma_5&0
\end{array}\right)
&
\s_4= \left(\begin{array}{cc}
\1_4&0\\0&-\1_4
\end{array}\right)
\\[4mm]
\s_5= \left(\begin{array}{cc}
0&\gamma_2 \\ \gamma_2 & 0
\end{array}\right),&
\s_6= \left(\begin{array}{cc}
0&\gamma_4\\\gamma_4&0
\end{array}\right),
&
\s_7= \left(\begin{array}{cc}
0&-\1_4\\\1_4&0
\end{array}\right).
&
\end{array}\]
where
\[
\begin{array}{lll}
\gamma_{1}=
{\scriptsize \left(\begin{array}{rrrr}
0&0&0&1\\
0&0&1&0\\
0&1&0&0\\
1&0&0&0
\end{array}\right)},&
\gamma_2={\scriptsize
 \left(\begin{array}{rrrr}
0&0&0&-1\\
0&0&1&0\\
0&-1&0&0\\
1&0&0&0
\end{array}\right)},
&
\gamma_3=
{\scriptsize
 \left(\begin{array}{rrrr}
0&0&1&0\\
0&0&0&-1\\
1&0&0&0\\
0&-1&0&0\\
\end{array}\right)},
\\
\\
\gamma_4= \left(\begin{array}{lr}
0&-\1_2\\
\1_2&0\\
\end{array}\right)
,&
\gamma_5=  \left(\begin{array}{lr}
\1_2&0\\
0&-\1_2
\end{array}\right).
\end{array}\]
We identify the spinor bundle $\mathcal S\to\tilde{M}$ with a vector bundle over $\tilde{M}$, with fibre $\R^8$ on which the orthonormal basis $e_i$, dual to $\xi^i$ (i.e. $ \xi^j(e_i)=\delta_i^{~j}$), acts via the \emph{Clifford multiplication}, i.e., via the multiplication defined by 
$$e_i\psi=\s_i\psi.$$
Then the lift of the Levi-Civita connection from $\tilde{M}$ to the spinor bundle $\mathcal S$ is given by
\[\widetilde{\nabla} \psi =d \psi+\frac{1}{4}\, \widetilde{\Gamma}^{kl}\, \s_{k} \, \s_{l}\, \psi,
\]
where $\widetilde{\Gamma}^{kl}$ are the Levi-Civita connection $1$-forms for the ambient metric $\widetilde{g}_{f,h}$ in the orthonormal co-frame $\xi^i$. Explicitly they are defined by
$$\widetilde{\Gamma}^{kl}(X)=\widetilde{g}(\widetilde{\nabla}_X \xi^k,\xi^l).$$
With this notation,
the spinor field $\psi$ corresponding to the parallel 3-form $\Upsilon$
 is as simple as 
\[\psi=
\left(
0,
1 ,
 -1, 
0 , 
1,
0,
0,
-1
\right)^\top.
\]
One can easily check that it is parallel, 
$\tilde{\nabla}\psi=0,$
with respect to the connection $\tilde{\nabla}$, \trev{and hence confirms that the metric $\widetilde{g}_{f,h}$ has holonomy in ${\bf G}_2$.}

\bigskip

Next, we address the important question as to when   the conformal classes $[g_{{\mathcal D}_{f,h}}]$ determines an ambient metric with holonomy  {\em equal to} $\mathbf{G}_2$.
 The next theorem gives a very simple sufficient condition.

\begin{theorem}\label{suffcond}
Let  $A$, $B$ and $C$ be solutions to the first order system \eqref{1sys} for functions $f=f(x,y,p)$ and $h=h(x,y)$, and assume that   $A_\varrho\not=0$. Then 
the holonomy   of the ambient metric 
\[ \tilde{g}_{{\mathcal D}_{f,h}}\ =\ 2\der t\der(\varrho t)+t^2g_{{\mathcal D}_{f,h}}(x^i,\varrho)\ =\ 
2\der t\der(\varrho t)+t^2\big(g_{{\mathcal D}_{f,h}}+A\, (\theta^1)^2+2B\, \theta^1\theta^4+C\,(\theta^4)^2\big),\]
for the conformal class defined  as in \eqref{F} by $f$ and $h$ has holonomy {\em equal} to $\mathbf{G}_2$.
\end{theorem}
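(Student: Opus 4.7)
The plan is to combine the already-established upper bound (the parallel $3$-form $\Upsilon$ gives $\mathfrak{hol}(\widetilde g_{\mathcal D_{f,h}})\subset \mathfrak{g}_2$) with a lower bound argument showing that no proper Lie subalgebra of $\mathfrak{g}_2$ can contain the curvature of $\widetilde g_{\mathcal D_{f,h}}$ once $A_\varrho\not\equiv 0$. Since the parallel transport conjugation identifies holonomy algebras at different points, it suffices to work at a single chosen point $p\in\widetilde M$ and use the Ambrose--Singer theorem together with the classification of proper subalgebras of the split form of $\mathfrak{g}_2$.

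First I would compute the curvature $\widetilde R$ of the ambient metric \eqref{amf} in the null co-frame $\omega^i$ of \eqref{g2nullframe}, substituting the first-order relations \eqref{1sys} whenever first derivatives of $A,B,C$ appear; this identifies $\widetilde R$ as a section of $\Lambda^2 T^*\widetilde M\otimes\mathfrak{g}_2$, with components that are explicit rational expressions in $A,A_\varrho,A_p,f,h$ and their derivatives. The key observation I would verify by direct calculation is that the coefficient $A_\varrho$ enters one specific curvature component linearly, in such a way that it serves as a non-degenerate ``switch'': as long as $A_\varrho\neq 0$ at $p$, that component of $\widetilde R$ projects non-trivially onto a distinguished root space of $\mathfrak{g}_2$ (under the grading induced by the parallel null plane spanned by $\partial_t$ and $\partial_\varrho$).

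Next, I would invoke the classification of proper subalgebras of the split real form $\mathfrak{g}_2\subset\mathfrak{so}(4,3)$: up to conjugacy, the only candidates that can occur as holonomy of an irreducible Ricci-flat pseudo-Riemannian $7$-manifold inside $\mathfrak{g}_2$ are (a) the two $\mathfrak{sl}_2\R$ subalgebras and $\mathfrak{sl}_2\R\oplus\mathfrak{sl}_2\R$, (b) $\mathfrak{sl}_3\R$ and $\mathfrak{su}(1,2)$, and (c) the parabolic subalgebras and their Levi factors. Each such subalgebra preserves an additional tensorial object (a compatible almost-complex/para-complex structure, a totally null plane of some rank, or an additional parallel spinor). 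I would then show, using the explicit Clifford-algebraic description of $\psi$ and $\Upsilon$ and the curvature calculation above, that the presence of any such additional parallel object forces a homogeneous linear relation among the curvature components that in turn forces $A_\varrho=0$ identically, contradicting the hypothesis.

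The main obstacle will be making the case analysis tractable: the list of proper subalgebras is short, but checking for each one that the associated stabilized tensor cannot be parallel is where the real work lies. I anticipate that the reducible cases (parabolic subalgebras preserving a null line or null plane) can be dismissed at once because Theorem \ref{1systheo} together with $A_\varrho\neq 0$ already forces $\widetilde R(\partial_\varrho,\partial_p)\partial_y\not\in\R\cdot\partial_v$, preventing any invariant null line transverse to $\widetilde{\mathcal V}$; the semisimple cases will be handled by showing that the isotypic decomposition of the curvature image under the candidate subalgebra is incompatible with the $A_\varrho$-component identified above. Once every proper subalgebra is ruled out, the Ambrose--Singer theorem yields $\mathfrak{hol}(\widetilde g_{\mathcal D_{f,h}})=\mathfrak{g}_2$, which translates, by the connectedness convention of the paper, to holonomy equal to $\mathbf G_2$.
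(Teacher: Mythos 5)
Your upper bound is fine, and your instinct to locate where $A_\varrho$ enters the curvature is exactly where the paper starts: in the co-frame \eqref{g2nullframe} one finds $R(X_3,X_4)=\tfrac12 A_\varrho\, h_1$ for a specific element $h_1$ of the stabilizer of $\Upsilon$. But your lower-bound strategy has a genuine gap. You propose to rule out every proper subalgebra of split $\mathfrak{g}_2$ by invoking a classification of "candidates that can occur as holonomy of an irreducible Ricci-flat pseudo-Riemannian $7$-manifold inside $\mathfrak{g}_2$". No such ready-made list exists, and the one you write down is incomplete: in indefinite signature there is no Berger-type theorem forcing irreducibility, so you cannot restrict to reductive subalgebras, and the reducible subalgebras of $\mathfrak{g}_2$ are not exhausted by "the parabolic subalgebras and their Levi factors" --- they include an infinite family of solvable and nilpotent subalgebras sitting inside the parabolics (the paper itself exhibits, for non-smooth solutions over the flat distribution, ambient holonomies such as a $10$-dimensional semidirect product of a $7$-dimensional radical with $\mathfrak{sl}_2\R$, illustrating how varied the possibilities are). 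Chevalley's theorem does give you an invariant tensor for each proper subalgebra, but in some unspecified tensor power, so "each candidate preserves an additional parallel object that forces $A_\varrho=0$" is not a finite, checkable case analysis as stated; the hard part of your argument is precisely the part left as a hope.

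The paper avoids all of this with a direct, constructive argument. It writes down an explicit basis $h_1,\dots,h_{14}$ of the stabilizer algebra of the parallel $3$-form \eqref{3form-omega}, observes $R(X_3,X_4)=\tfrac12 A_\varrho\,h_1$, and then shows by an explicit chain of covariant derivatives ($\nabla_{X_2}h_1=-\tfrac{\sqrt2}{18t}h_2$, $\nabla_{X_5}h_1\equiv\tfrac{\sqrt2}{18t}h_3 \bmod \mathcal H_2$, and so on through $h_{14}$) that each new basis element is produced modulo the span $\mathcal H_i$ of the previous ones, with $A_\varrho$ appearing as the only non-constant coefficient at several steps. Since curvature and its iterated covariant derivatives lie in the holonomy algebra, the hypothesis $A_\varrho\neq0$ immediately yields $\mathfrak{hol}(\tilde g_{\mathcal D_{f,h}})\supseteq\mathfrak{g}_2$, hence equality. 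If you want to salvage your approach, you would have to replace the appeal to a classification by exactly this kind of generation argument; as written, the proposal does not close.
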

\begin{proof}

Let $X_i$, $i = 1,  \ldots, 7$ be the vector fields dual to the 1-forms in \eqref{g2nullframe}.
The algebra of derivations,  represented by $(1,1)$-tensors, which preserve the 3-form \eqref{3form-omega} is given by
\begin{alignat*}{2}
h_1 &=X_3\otimes  \omega_2-X_6\otimes  \omega_5, 
\\
h_2 &= X_3\otimes  \omega_1-X_7\otimes  \omega_5,
\\
h _3 &= X_3\otimes  \omega_4-\,X_4\otimes  \omega_5+\,X_6\otimes  \omega_1-\,X_7\otimes  \omega_2,
\\
h_4 &=\,X_2\otimes  \omega_5-\,X_3\otimes  \omega_6+2\,X_4\otimes  \omega_1+-2\,X_7\otimes  \omega_4,\quad
\\
h_5 &= \,X_1\otimes  \omega_5-\,X_3\otimes  \omega_7-2\,X_4\otimes  \omega_2+2\,X_6\otimes  \omega_4,
\\
h_6 &= \,X_2\otimes  \omega_1-\,X_7\otimes  \omega_6,
\\
h _7 &=\,X_1\otimes  \omega_1 -2\,X_2\otimes  \omega_2+\,X_3\otimes  \omega_3-\,X_5\otimes  \omega_5+2\,X_6\otimes  \omega_6- \,X_7\otimes  \omega_7,   \kern -30pt &&
 \\
h_8 & =\,X_2\otimes  \omega_2-\,X_3\otimes  \omega_3+\,X_5\otimes  \omega_5-\,X_6\otimes  \omega_6\\
h_9&= \,X_1\otimes  \omega_2-\,X_6\otimes  \omega_7,\\
h_{10} &= \,X_2\otimes  \omega_4-\,X_4\otimes  \omega_6-\,X_5\otimes  \omega_1+\,X_7\otimes  \omega_3,\\
h_{11} & =\,X_1\otimes  \omega_4-\,X_4\otimes  \omega_7+\,X_5\otimes  \omega_2-\,X_6\otimes  \omega_3,\\
h_{12} &= \,X_1\otimes  \omega_6-\,X_2\otimes  \omega_7+2\,X_4\otimes  \omega_3-2\,X_5\otimes  \omega_4,\\
h_{13} & = \,X_2\otimes  \omega_3-\,X_5\otimes  \omega_6,\\
h_{14} &= \,X_1\otimes  \omega_3-\,X_5\otimes  \omega_7.
\end{alignat*}
Because the 3-form  $\Upsilon$ is parallel, 
the curvature tensor $R(X, Y)$ and its directional covariant derivatives of the curvature tensor lie in the 
span of these $(1,1)$-tensors.
  To prove the theorem, we have to check, conversely, that all the tensors $h_i$ are in the span of the curvature tensor $R(X, Y)$ and its directional covariant derivatives. 
To this end set 
 $\CalH_i = \text{span}\{h_1, h_2, \ldots , h_i\}$.  Then one easily calculates for the curvature and its derivatives
\begin{alignat*}{2}
R(X_3, X_4) &= \frac12 A_{\varrho}\, h_1, 
&\quad
 \nabla_{X_2} h_1 &= -\frac{\sqrt{2}}{18t} \,h_2,
\\ 
\nabla_{X_5} h_1 & = \frac{\sqrt{2}}{18t}\, h_3  \mod  \CalH_2,
&\quad
\nabla_{X_5}h_3 &=  \frac{\sqrt{2}}{18t}\, h_4 \mod \CalH_3, 
\\
\nabla_{X_2}h_4 &= -9t\frac{\sqrt{2}}2 A_{\varrho}\, h_5  \mod \CalH_4,
&\quad
\nabla_{X_5}h_4 &= -\frac{\sqrt{2}}{6t}\, h_6  \mod  \CalH_5,
\\
\nabla_{X_5}h_5 &=  -\frac{\sqrt{2}}{18t}\, h_7  \mod \CalH_6,
&\quad
\nabla_{X_2}h_6 &=  -9t\frac{\sqrt{2}}2 A_{\varrho}\, h_8  \mod\ \CalH_7,
\\
\nabla_{X_2}h_7 &=  27t\frac{\sqrt{2}}2 A_{\varrho}\, h_9  \mod \CalH_8,
&\quad
\nabla_{X_5}h_8 &=  \frac{\sqrt{2}}{18t}\,h_{10}  \mod \CalH_9,
\\
\nabla_{X_2}h_9 &=  \frac{\sqrt{2}}{18t}\, h_{11}  \mod \CalH_{10},
&\quad
\nabla_{X_5}h_{11} &=- \frac{\sqrt{2}}{18t}\ h_{12}  \mod \CalH_{11},
\\
\nabla_{X_5}h_{12} &=  -\frac{\sqrt{2}}{6t}\, h_{13}  \mod \CalH_{12},
&\quad
\nabla_{X_2}h_{13} &=  -9t\frac{\sqrt{2}}2 A_{\varrho}\, h_{14}  \mod  \CalH_{13}.
\end{alignat*}
This establishes  the theorem.
\end{proof}
Let us compare  this sufficient criterion with the one  given in 
\cite{leistner-nurowski09} for conformal classes defined by
 $h(x,y)\equiv b$ and $f(x,y,p)=\sum_{i=0}^6 a_ip^i$ with constants $a_i$ and $b$: There it was shown that such a conformal class has ambient holonomy equal to $\mathbf{G}_2$, if $a_3^2+ a_4^2+a_4^2+a_5^2+a_6^2\not=0$. Clearly the case $f(x,y,p)=p^3 $ is not covered by the new criterion in Theorem \ref{suffcond}, since in this case we have $A\equiv 0$, but as soon as $a_4\not=0$ the new criterion applies.

\subsection{Examples}
Returning to Theorem 3.2,   we see that if we want to  have examples of ambient metrics that \trev{ are {\em polynomial in $\varrho$}  of order at most $k$} we need to have  $\frac{\partial^{(2k+2)}f}{\partial p^{(2k+2)}}\equiv 0$, i.e. the function $f=f(x,y,p)$ defining the distribution must be a polynomial in $p$ of order no higher than $2k+1$.
On the other hand, as soon as we have $\frac{\partial^4 f}{\partial p^4}\not=0$, and hence $A_\varrho\neq 0$, Theorem \ref{suffcond} ensures that the holonomy of the ambient metric is equal to $\mathbf{G}_2$. Hence, combining the results of Theorems \ref{linsys1}, \ref{as} and \ref{suffcond} we can construct  \emph{explicit}  signature $(4,3)$ metrics with \emph{full}  ${\bf G}_2$ holonomy which are given in  closed form, as polynomials in the variables  $p$ and $\varrho$, provided that the function $f=f(x,y,p)$ is a polynomial in $p$ of order $k\geq 4$. 
Here is an example of such a full ${\bf G}_2$ holonomy metrics that are polynomial in $p$ and $\varrho$:
\begin{example}\label{polyEx}
On $M=\{(x,y,z,p,q)\}$ let $f_0,f_1,\dots f_9$ be arbitrary real analytic functions of the variables $x$ and $y$. Define a function $f$ of $x,y$ and $p$ as
$$
f=f_0+f_1p+f_2p^2+f_3p^3+f_4p^4+f_5p^5+f_6p^6+f_7p^7+f_8p^8+f_9p^9.
$$
Then Theorem \ref{as} implies that the ambient metric for $[g_{{\mathcal D}_{f,0}}]$ 
is
\be
{\color{black}\tilde{g}=2\der t\der(\varrho t)+t^2\Big(}2\omega^1\omega^5-2\omega^2\omega^4+(\omega^3)^2{\color{black}+A\, (\omega^1)^2+2B\, \omega^1\omega^4+C\, (\omega^4)^2\Big)},\label{am1}\ee
where the $\omega^i$'s are defined in \eqref{omegas} and the functions $A$, $B$ and $C$ given on $\tilde{M}$ by
$$
\begin{aligned}
&A=\tfrac{63}{8}(f_8+9pf_9)\varrho^3+\tfrac{27}{8}(f_6+7pf_7+28p^2f_8+84p^3f_9)\varrho^2-\\&\quad\quad\quad\tfrac{9}{5}(f_4+5pf_5+15p^2f_6+35p^3f_7+70p^4f_8+126p^5f_9)\varrho,\\
&B=-\tfrac{63}{256}f_9\varrho^4-\tfrac{7}{64}(f_7+8pf_8+36p^2f_9)\varrho^3+\tfrac{1}{16}(f_5+6pf_6+21p^2f_7+56p^3f_8+126p^4f_9)\varrho^2-\\
&\quad\quad\quad\tfrac{3}{20}(f_3+4pf_4+10p^2f_5+20p^3f_6+35p^4f_7+56p^5f_8+84p^6f_9)\varrho,\\
&C=\tfrac{7}{1152}(f_8+9pf_9)\varrho^4+\tfrac{1}{360}(f_4+5pf_5+15p^2f_6+35p^3f_7+70p^4f_8+126p^5f_9)\varrho^2+\\
&\quad\quad\quad\tfrac{1}{45}(2b^2-f_2-3pf_3-6p^2f_4-10p^3f_5-15p^4f_6-21p^5f_7-28p^6f_8-36p^7f_9)\varrho.
\end{aligned}
$$
For a generic choice of the  functions $f_0,\dots, f_9$,  the  metric \eqref{am1} has holonomy equal to ${\bf G}_2$.
\end{example}

Next we present two more complicated examples. These will be examples of ambient metrics $\tilde{g}_{{\mathcal D}_{f,h}}$ with full ${\bf G}_2$ holonomy, which are also in closed form, but which are {\em not  polynomial} in the variable $p$. We construct them by finding special closed form solutions to the second order system \eqref{2sys}. The first of the examples gives a strategy how to find solutions of this type.
\begin{example}
  Consider the distribution $\mathcal D_{f, h}$ with $f(x, y, p) = \sin(p) $ and $h(x,y) =0$.  

In order to find the ambient metric for $g_{\mathcal D_{f, h}}$, we first solve the
equation $L(A) = f_{pppp}$ for $A(x,y, p ,\varrho)$ and then determine $B$ and $C$ from the first order equations \eqref{1sys} and the initial conditions $B|_{\varrho=0}=0$ and $C|_{\varrho=0}=0$.  To solve for $A$, we remark that
if $f=f(x,y,p)$ satisfies a constant coefficient linear ODE  with respect to the variable $p$ and with fundamental solutions $f_1(x,y,p),  \ldots , f_N(x,y,p)$,
then the analytic solutions of  \[L(A)  = f_{pppp}\]
are of the form
\[
A = a_1(\varrho)f_1(x,y,p) + a_2(\varrho)f_2(x,y,p)  +  \ldots  + a_N(\varrho)f_N(x,y,p)
\]
where the  $a_2(\varrho), \ldots , a_N(\varrho)$ satisfy a system of linear second order ODEs.

For the problem at hand, this implies that  $A = a_1(\varrho)\sin(p) + a_2(\varrho)\cos(p)$, where
\begin{equation*}
   2\varrho a_{1,\varrho\varrho}-3 a_{1,\varrho} + \tfrac{1}{8} a_1 = \tfrac{9}{40},\quad
   \varrho a_{2,\varrho\varrho}-3 a_{2,\varrho}+\tfrac{1}{8} a_2  = 0 \quad\text{and}\quad a_1(0) = a_2(0)= 0,
\end{equation*}
The analytic solutions to these  equations are 
\begin{equation*}
   a_1(\varrho)= \tfrac{3}{20}\varrho \cos(\tfrac{\sqrt{\varrho}}{2})- \tfrac{9}{10}\sqrt{\varrho}\sin(\tfrac{\sqrt{\varrho}}{2})-\tfrac{9}{5}( \cos(\tfrac{\sqrt{\varrho}}{2})- 1\quad\text{and}\quad a_2(\varrho) = 0.
\end{equation*} 
Then we find that  
$$B =b(\varrho)\cos(p)\quad {\rm and}\quad C=c(\varrho)\sin(p),$$ 
where  
\begin{eqnarray*}
b(\varrho) 
&= &
-
\tfrac{1}{120}\varrho^{\frac{3}{2}} \sin(\tfrac{\sqrt{\varrho}}{2})
- \tfrac{1}{10}\varrho\cos(\tfrac{\sqrt{\varrho}}{2})
+\tfrac{1}{2}\sqrt{\varrho}\sin(\tfrac{\sqrt{\varrho}}{2})
+\cos(\tfrac{\sqrt{\varrho}}{2})
-1
\\
c(\varrho) 
&= &
\tfrac{1}{2160}\varrho^{2} \cos(\tfrac{\sqrt{\varrho}}{2})
-
\tfrac{1}{120}\varrho^{\frac{3}{2}} \sin(\tfrac{\sqrt{\varrho}}{2})
- \tfrac{13}{180}\varrho\cos(\tfrac{\sqrt{\varrho}}{2})
+\tfrac{1}{3}\sqrt{\varrho}\sin(\tfrac{\sqrt{\varrho}}{2})
+\tfrac{2}{3}(\cos(\tfrac{\sqrt{\varrho}}{2})
-1).
\end{eqnarray*}
Note that  these functions are analytic in $\varrho$. Moreover, by Theorem  \ref{suffcond} the holonomy of the  resulting ambient metric given by $A$, $B$ and $C$ is equal to $\mathbf{G}_2$.
\end{example}

\begin{example}\label{expEx}
Similarly, for
\[F=q^2+\e^p.\]
the solutions of  equations \eqref{2sys} are of the form
\[
A=A(\varrho, p) =a(\varrho)\e^p,\ \ \ B=B(\varrho, p) =b(\varrho)\\e^p,\ \ \ C=C(\varrho, p) =c(\varrho)\e^p
\]
for functions $a,b,c$ of $\varrho$ given by
\begin{eqnarray*}
a(\varrho) 
&= &
\tfrac{3}{20}\varrho \cosh(\tfrac{\sqrt{\varrho}}{2})- \tfrac{9}{10}\sqrt{\varrho}\sinh(\tfrac{\sqrt{\varrho}}{2})-\tfrac{9}{5}( \cosh(\tfrac{\sqrt{\varrho}}{2})-1)
\\
b(\varrho) 
&= &
-
\tfrac{1}{120}\varrho^{\frac{3}{2}} \sinh(\tfrac{\sqrt{\varrho}}{2})
+ \tfrac{1}{10}\varrho\cosh(\tfrac{\sqrt{\varrho}}{2})
-\tfrac{1}{2}\sqrt{\varrho}\sinh(\tfrac{\sqrt{\varrho}}{2})
+\cosh(\tfrac{\sqrt{\varrho}}{2})
-1
\\
c(\varrho) 
&= &
\tfrac{1}{2160}\varrho^{2} \cosh(\tfrac{\sqrt{\varrho}}{2})
-
\tfrac{1}{120}\varrho^{\frac{3}{2}} \sinh(\tfrac{\sqrt{\varrho}}{2})
+ \tfrac{13}{180}\varrho\cosh(\tfrac{\sqrt{\varrho}}{2})
-\tfrac{1}{3}\sqrt{\varrho}\sinh(\tfrac{\sqrt{\varrho}}{2})
+\tfrac{2}{3}(\cosh(\tfrac{\sqrt{\varrho}}{2})
-1).
\end{eqnarray*}
Again, by Theorem  \ref{suffcond} we conclude that the holonomy of the   ambient metric is equal to $\mathbf{G}_2$.
\end{example}

Finally we give an example that shows that even when the sufficient condition $A_\varrho\not=0$ in Theorem \ref{suffcond} is not satisfied, we can obtain ambient metrics with holonomy equal to $\mathbf{G}_2$.
\begin{example} Consider the conformal class given by $f\equiv 0$. For such a conformal class the ambient metric in \eqref{amf} has $A\equiv 0$, 
$B=
-\frac{3}{20}\varrho h_y$, and $
C=
\frac{2}{45}\varrho h^2
-\frac{1}{15}\varrho (ph_y+h_x) 
$. When taking $h$ as simple as $h(x,y)=y$ and differentiating the curvature, we get that the ambient holonomy has dimension $14$ and hence is equal to $\mathbf{G}_2$.
\end{example}

\subsection{Solutions \trev{non-smooth} in $\varrho$}\label{nona-sec}
To find \emph{all} solutions to the linear system (\ref{2sys}), i.e., also the ones that are \trev{not smooth}  in $\varrho$,  
in analogy to Theorem \ref{pptheo} we observe that the two independent solutions to $L(\varrho^k)=0$ are $\varrho^0$ and $\varrho^{ 5/2}$.  Thus, the most general solution to the  system \eqref{2sys} can be obtained by the following series
 \begin{eqnarray*}
 A&=&\sum_{k=1}^\infty a_k(x,y,p)\varrho^k+\varrho^{ 5/2}\sum_{k=0}^\infty \alpha_k(x,y,p)\varrho^k,\\
B&=&\sum_{k=1}^\infty b_k(x,y,p)\varrho^k+\varrho^{ 5/2}\sum_{k=0}^\infty \beta_k(x,y,p)\varrho^k,\\
C&=&\sum_{k=1}^\infty c_k(x,y,p)\varrho^k+\varrho^{ 5/2}\sum_{k=0}^\infty \gamma_k(x,y,p)\varrho^k.
 \end{eqnarray*}
\begin{theorem}\label{nasol}
The general solution to the linear system \eqref{2sys}
is given by
\beqn
A
&=&
\frac{3}{5} \sum_{k=1}^\infty\frac{(2k-1)(2k-3)}{2^{2k}(2k)!}\,\frac{\partial^{(2k+2)}{ f}}{\partial p^{(2k+2)}}\, \varrho^k+
60\, \varrho^{ 5/2} \sum_{k=0}^\infty \frac{(k+2)(k+1)}{2^{2k}(2k+5)!}\,\frac{\partial^{2k}{ \alpha_0}}{\partial p^{2k}}\,\varrho^k,\\
B&=&
-\frac{3}{20}\varrho h_y
-\frac{1}{15}\sum_{k=1}^\infty\frac{(2k-1)(2k-3)(2k-5)}{2^{2k}(2k)!}\,\frac{\partial^{(2k+1)}{ f}}{\partial p^{(2k+1)}}\, \varrho^k
\\&&
{}+\frac{20}{3}\varrho^{ 5/2}\sum_{k=0}^\infty {(k+1)(k+2)}{2^{2k}(2k+5)!}\,\Big(9\frac{\partial^{2k}{ \beta_0}}{\partial p^{2k}}-2k\frac{\partial^{(2k-1)}{ \alpha_0}}{\partial p^{(2k-1)}}\Big)\,\varrho^k,\\
C&=&
\frac{2}{45}\varrho h^2
-\frac{1}{15}\varrho (ph_y+h_x)
+
\frac{2}{135} \sum_{k=1}^\infty\Big(\frac{(k-3)(2k-1)(2k-3)(2k-5)}{2^{2k}(2k)!}\,\frac{\partial^{2k}{ f}}{\partial p^{2k}}\Big)\, \varrho^k
\\&&{}+ \frac{20}{27}\,\varrho^{ 5/2}\sum_{k=0}^\infty\frac{(k+1)(k+2)}{2^{2k}(2k+5)!}\,\Big(81\frac{\partial^{2k}{ \gamma_0}}{\partial p^{2k}}-36k\frac{\partial^{(2k-1)}{ \beta_0}}{\partial p^{(2k-1)}}+2k(2k-1)\frac{\partial^{(2k-2)}{ \alpha_0}}{\partial p^{(2k-2)}}\Big)\,\varrho^k.
\eeqn
Here  ${ \alpha_0},{ \beta_0}$ and ${ \gamma_0}$, are \emph{arbitrary} functions of the variables $x$, $y$ and $p$.
\end{theorem}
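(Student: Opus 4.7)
The plan is to treat the linear system \eqref{2sys} by the Frobenius method with respect to the regular singular point $\varrho=0$ of the operator $L = 2\varrho\partial_\varrho^2 - 3\partial_\varrho - \tfrac{1}{8}\partial_p^2$. Substituting $\varrho^\lambda$ into the leading ODE part $2\varrho\partial_\varrho^2 - 3\partial_\varrho$ yields the indicial polynomial $\lambda(2\lambda-5)$, whose roots are $\lambda=0$ and $\lambda=5/2$. Since these roots differ by a non-integer, the space of formal solutions of the homogeneous equation $Lu=0$ decomposes as a direct sum of an analytic series (leading exponent $0$) and a series of the form $\varrho^{5/2}\sum_{k\ge 0}u_k(x,y,p)\varrho^k$. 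Consequently, the most general solution of \eqref{2sys} vanishing at $\varrho=0$ is the analytic particular solution already recorded in Theorem \ref{as} plus an arbitrary homogeneous solution whose leading behaviour is $\varrho^{5/2}$.

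First I would construct the homogeneous part of $A$. With the ansatz $A^{\mathrm h}=\varrho^{5/2}\sum_{k\ge 0}\alpha_k\varrho^k$, substituting into $LA^{\mathrm h}=0$ and matching powers of $\varrho$ produces the recursion
\[
k(2k+5)\alpha_k \ =\ \tfrac{1}{8}\partial_p^2\alpha_{k-1},\qquad k\ge 1,
\]
with $\alpha_0$ free. Iteration gives $\alpha_k$ as a constant multiple of $\partial_p^{2k}\alpha_0$, and a short manipulation of $\prod_{j=1}^k(2j+5)$ against $(2k+5)!$ rewrites this constant as $60(k+1)(k+2)/\bigl(2^{2k}(2k+5)!\bigr)$, which is precisely the coefficient appearing in the announced formula for $A$.

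Next I would propagate this $A^{\mathrm h}$ into the $B$- and $C$-equations. Splitting $A=A^{\mathrm{an}}+A^{\mathrm h}$ and $B=B^{\mathrm{an}}+B^{\mathrm h}$, the analytic contributions to the sources reproduce the analytic series of Theorem \ref{as}, so the non-smooth remainders satisfy $LB^{\mathrm h}=-\tfrac{1}{36}A^{\mathrm h}_p$ and $LC^{\mathrm h}=-\tfrac{1}{18}B^{\mathrm h}_p+\tfrac{1}{324}A^{\mathrm h}$. Feeding in the Frobenius ansatz $\varrho^{5/2}\sum_k\beta_k\varrho^k$ and $\varrho^{5/2}\sum_k\gamma_k\varrho^k$ yields the coupled linear recursions
\begin{align*}
k(2k+5)\beta_k\ &=\ \tfrac{1}{8}\partial_p^2\beta_{k-1} - \tfrac{1}{36}\partial_p\alpha_{k-1}, \\
k(2k+5)\gamma_k\ &=\ \tfrac{1}{8}\partial_p^2\gamma_{k-1} - \tfrac{1}{18}\partial_p\beta_{k-1} + \tfrac{1}{324}\alpha_{k-1},
\end{align*}
with $\beta_0$ and $\gamma_0$ free. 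Solving these in closed form in terms of $\alpha_0$, $\beta_0$, $\gamma_0$ produces the stated formulas; the factors $2k$ and $2k(2k-1)$ multiplying $\partial_p^{2k-1}\alpha_0$, $\partial_p^{2k-1}\beta_0$ and $\partial_p^{2k-2}\alpha_0$ emerge naturally from the index shifts that occur each time an upstream series is differentiated in $p$ and reinserted downstream.

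The main obstacle is the combinatorial bookkeeping needed to collapse the iterated products into the uniform coefficient $(k+1)(k+2)/\bigl(2^{2k}(2k+5)!\bigr)$ and to pin down the precise cross-coupling constants $20/3$ and $20/27$ in the $B$- and $C$-series; by contrast, all the PDE-theoretic content of the argument is encoded in the non-resonance of the indicial exponents $0$ and $5/2$. Completeness, namely that no further formal solutions exist beyond those parametrized by $\alpha_0$, $\beta_0$, $\gamma_0$ together with the analytic data of Theorem \ref{as}, follows from the standard Frobenius dimension count, since each indicial branch contributes exactly one free function per equation.
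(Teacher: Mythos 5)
Your proposal is correct and follows essentially the same route as the paper, which sets up exactly this ansatz (analytic branch plus a $\varrho^{5/2}$ branch coming from the indicial roots $0$ and $5/2$ of $L$) and leaves the coefficient recursions implicit; your recursions $k(2k+5)\alpha_k=\tfrac18\partial_p^2\alpha_{k-1}$ and their $\beta$-, $\gamma$-analogues, together with the identity $8^k k!\prod_{j=1}^k(2j+5)=2^{2k}(2k+5)!/\bigl(60(k+1)(k+2)\bigr)$, do reproduce the stated coefficients. No gap to report.
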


Note that the analytic solutions are totally determined by the distribution, i.e. by the functions $f$ and  $h$. On the other hand, the  non-smooth part of the solutions is determined only by the functions $\alpha_0$, $\beta_0$ and $\gamma_0$ and hence does  not depend on the distribution ${\mathcal D}_{f,h}$ at all.

\begin{remark}\label{1sysrem}
In relation to Remark \ref{na-rem} we observe that the non-smooth solutions to the second order system \eqref{2sys} that are given in Theorem \ref{nasol}, when taking $f=h=0$ and $\alpha_0=\alpha_0(x,y)$,  do {\em not} satisfy the first order system \eqref{1sys}. Indeed, in this case we get $A\equiv 0$ and the first two equations of 
\eqref{1sys} become $B_p=0$ and $B_\varrho=0$, which do not hold when choosing
 $\frac{\partial^2\beta_0}{\partial p^2}\not=0$.
\end{remark}

\subsection{Remarks on the holonomy of the \trev{non-smooth} solutions}
The general solution to the linear system (\ref{2sys}) given in Theorem \ref{nasol} 
 enables us, via \eqref{amf}, to write down  {explicit} Ricci-flat metrics $\tilde{g}_{{\mathcal D}_{f,h}}$ that truncate at a prescribed order in the variable $\varrho$.
Here we use some of these solutions to indicate certain holonomy issues related to the ambient metric construction. 
Note that  solutions with nontrivial $\varrho^{\frac{5}{2}+k}$ terms are only defined for $\varrho\geq 0$ and that they are only twice differentiable at $\varrho=0$. Thus, considering non-smooth in $\varrho$ ambient metrics and trying to study the holonomy questions we would need a notion of a holonomy of a pseudo-Riemannian manifold with a boundary. We are unaware of such setting for the holonomy. Therefore in the following, considering our examples, we will only talk about the holonomy of $\tilde{g}_{{\mathcal D}_{f,h}}$ on a domain where $\varrho>0$.

In the  discussion below we concentrate on the non-smooth ambient metrics (\ref{amf}) for the \emph{flat} distribution for which $f\equiv 0$ and $h\equiv 0$.  
In such case the solution to the linear system (\ref{2sys}) is
$$\begin{aligned}
A&=\varrho^{ 5/2} \sum_{k=0}^\infty 60\frac{(k+2)(k+1)}{2^{2k}(2k+5)!}\frac{\partial^{2k}{ \alpha_0}}{\partial p^{2k}}\, \varrho^k,\\
B&=\varrho^{ 5/2}\sum_{k=0}^\infty \frac{20}{3}\frac{(k+1)(k+2)}{2^{2k}(2k+5)!}\Big(9\frac{\partial^{2k}{ \beta_0}}{\partial p^{2k}}-2k\frac{\partial^{(2k-1)}{ \alpha_0}}{\partial p^{(2k-1)}}\Big)\, \varrho^k,\\
C&=\varrho^{ 5/2}\sum_{k=0}^\infty \frac{20}{27}\frac{(k+1)(k+2)}{2^{2k}(2k+5)!}\Big(81\frac{\partial^{2k}{ \gamma_0}}{\partial p^{2k}}-36k\frac{\partial^{(2k-1)}{ \beta_0}}{\partial p^{(2k-1)}}+2k(2k-1)\frac{\partial^{(2k-2)}{ \alpha_0}}{\partial p^{(2k-2)}}\Big)\,\varrho^k,
\end{aligned}$$
 and depends on three arbitrary functions $\alpha_0,\beta_0,\gamma_0$ of variables $x$, $y$ and $p$.
 As an illustration we discuss holonomy properties of the corresponding ambient metrics for a very simple example, in which we make a particular choice of these 3 functions.

\begin{example}
Let  $c$ be a real constant and set
$$\begin{aligned}
\alpha_0=&\beta(x)+p\alpha(x),\quad\quad\beta_0=\varphi_0(x)+p\varphi_1(x)+252c p^2\alpha(x),\\
\gamma_0=&\varphi_3(x)+p\varphi_4(x)+\tfrac{1}{81}p^2(2268\varphi_2(x)-\beta(x)+18\varphi_1(x)).
\end{aligned}$$
This gives the following solution to the linear system (\ref{2sys}),
$$
\begin{aligned}
A&=\Big(252p\alpha(x)+\beta(x)\Big)\varrho^{5/2},\\
B&=(9c-1)\alpha(x)\varrho^{7/2}+\Big(\varphi_0(x)+p\varphi_1(x)+252 c p^2\alpha(x)\Big)\varrho^{5/2},\\
C&=\Big(\varphi_2(x)+\big(\tfrac19-4c\big)p\alpha(x)\Big)\varrho^{7/2}
+\Big(\varphi_3(x)+p\varphi_4(x)+\tfrac{1}{81}p^2\big(18\varphi_1(x)+2268\varphi_2(x)-\beta(x)\big)\Big)\varrho^{5/2},
\end{aligned}
$$
corresponding to the following  \emph{family} of ambient metrics for the \emph{flat} conformal structure 
\be
\begin{aligned}
\tilde{g}=\ &2\der t\der(\varrho t)+t^2\Big(}8(\der p-q\der x)^2-6(\der z-2q\der p+q^2\der x)\der x-12(\der y-p\der x)\der q{\color{black}+\\&+(252p\alpha+\beta)\varrho^{5/2}\, (\der y -p\der x)^2+\Big.
\\&+6((9c-1)\alpha\varrho^{7/2}+(\varphi_0+p\varphi_1+252 c p^2\alpha)\varrho^{5/2})\,  (\der y -p\der x)\der x+\\&
\Big. +9((\varphi_2+(\tfrac19-4c)p\alpha)\varrho^{7/2}+(\varphi_3+p\varphi_4+\tfrac{1}{81}p^2(18\varphi_1+2268\varphi_2-\beta))\varrho^{5/2})\, \der x^2\Big).\end{aligned}.\label{flatn}\ee
We see that the obtained family of Ricci-flat metrics $\tilde{g}$ depends on \emph{seven} arbitrary functions $\alpha=\alpha(x), \beta=\beta(x), \varphi_0=\varphi_0(x),\varphi_1=\varphi_1(x),\dots,\varphi_4=\varphi_4(x)$ of the variable $x$, and a real constant $c$. The metric $\tilde{g}$ is an ambient metric for the flat conformal structure represented by a flat metric 
$$g=8(\der p-q\der x)^2-6(\der z-2q\der p+q^2\der x)\der x-12(\der y-p\der x)\der q.$$ 
The holonomy properties of the family $\tilde{g}$ are quite interesting. Generically
 these metrics have \emph{full} $\mathfrak{so}(4,3)$ holonomy. We illustrate the behaviour with a few special cases:
\begin{enumerate}
\setlength{\itemsep=0mm}
\setlength{\labelsep=2pt}
\item In order to verify the statement about  generic holonomy, 
  we put
 \[\beta(x)\equiv \varphi_0(x)\equiv \varphi_1(x)\equiv \dots\equiv \varphi_4(x)\equiv 0,\ \alpha(x)\equiv 1,\]
 and $c$ an arbitrary constant. Then  our calculations   show the following:
  The   dimensions of the
  vector spaces spanned by the derivatives of the curvature at the point with $x=y=z=p=0$ and $q=\varrho=t=1$  up to order  4 are  6,  15, 18, 20, 21, respectively.
Hence, even in this special situation, the holonomy algebra   must be equal to $\mathfrak{so}(4,3)$. 
\item
Interestingly, the metrics $\tilde{g}$ include, as special cases, metrics with holonomy {\em equal to}  ${\bf G}_2$.
For this we put
\[\beta(x)\equiv \varphi_0(x)\equiv \varphi_1(x)\equiv \dots\equiv \varphi_4(x)\equiv 0,\ \ c=0.\] 
Here our  calculations show   the following:
The   dimensions of the vector spaces spanned by the derivatives of the curvature at the point with $x=y=z=p=0$ and $q=\varrho=t=1$  up to order 5 are 4,  10, 11, 12, 13, 14, respectively, and do not increase when differentiating again. 
 Hence, the holonomy algebra is $14$-dimensional, and one can check that it is equal to ${\mathfrak g}_2$. 
\item
Another choice of the free functions in (\ref{flatn}) shows that the considered family of Ricci flat metrics may still have different holonomy algebra. For this
 we set \[\alpha(x)\equiv\beta(x)\equiv \varphi_0(x)\equiv \varphi_1(x)\equiv \varphi_2(x)\equiv \varphi_4(x)\equiv 0,\ \ c=1/9.\] 
 Here our Maple calculations show that the 
 dimensions of the
vector spaces spanned by the derivatives of the curvature up to order 2 are 4, 9, 10, respectively, and do not increase when differentiating again. 
In fact, the holonomy algebra   is a semidirect product of a $7$-dimensional radical and $3$-dimensional semisimple Lie algebra. It is \emph{not} a subalgebra of ${\mathfrak g}_2$. 
 \end{enumerate}
\end{example}

\section{Generic 3-distributions in dimension 6 and their ambient metrics}
\label{secspin43}
\subsection{Generic 3-distributions in dimension 6 and the associated conformal classes}
In this section we construct explicit examples of ambient metrics for  conformal structures  with metrics of signature $(3,3)$ which are naturally associated to \emph{generic} rank 3 distributions in dimension~$6$. 
Here \emph{generic} means that the distribution satisfies 
$$[{\mathcal D},{\mathcal D}]+{\mathcal D}={\rm T}M^6.$$
The associated conformal structures were introduced by Bryant in \cite{bryant06} as structures that encode local invariants of such distributions. We will call them {\em Bryant's conformal structures}.

Since our purpose is to find new examples of ambient metrics, we will not consider all generic rank $3$ distributions in dimension $6$, but only a special subclass. 
We consider an open set $\mathcal U$ near the origin in $\bbR^6$ with coordinates $(x^1,x^2,x^3,y^1,y^2,y^3)$. We  define a rank $ 3$ distribution ${\mathcal D}_f$ as the annihilator of three 1-forms
$$\theta_1=\der y^1+x^2\der x^3,\quad\theta_2=\der y^2+f\der x^1,\quad\theta_3=\der y^3+x^1\der x^2.$$
Here  $f=f(x^1,x^2,x^3)$ is a differentiable function of the variables $(x^1,x^2,x^3)$ only. 
Explicitly, the distribution is defined by
$${\mathcal D}_f=\Span\left(\,\frac{\partial}{\partial x^3}-x^2\frac{\partial}{\partial y^1},\,\,\,\frac{\partial}{\partial x^1}-f\frac{\partial}{\partial y^2},\,\,\,\frac{\partial}{\partial x^2}-x^1\frac{\partial}{\partial y^3}\,\right).$$
This distribution is {generic} provided that $$\frac{\partial f}{\partial x^3}\neq 0,$$ 
and this will be always assumed in what follows.
In the special case of a 3-distribution ${\mathcal D}_f$ defined above, Bryant's conformal class $[g_{{\mathcal D}_f}]$ can be represented by a metric $g_{{\mathcal D}_f}$ as
\be
g_{{\mathcal D}_f}=2\theta^1\theta^4+2\theta^2\theta^5+2\theta^3\theta^6,\label{bm}\ee
with the null co-frame
\be
\begin{aligned}
\theta^1=&~\der y^1+x^2\der x^3,\qquad \theta^2=\der y^2+f\der x^1,\qquad\theta^3=~\der y^3+x^1\der x^2,\qquad
\theta^4=~\tfrac32 f_{3}^2~\der x^1,\\
\theta^5=&~\tfrac32 f_{3}~\der x^2~-~\tfrac12 f_{33}~\theta^1,\\
\theta^6=&~\tfrac32 f_{3}^2~\der x^3~+~\tfrac12f_{13}~\theta^2~+~\tfrac32 f_{3}f_{2}~\der x^2 ~+~\tfrac12(f_{3}f_{23}-f_{2}f_{33})~\theta^1~+~
\tfrac12(f_{2}f_{13}-f_{3}f_{12})~\theta^3.
\end{aligned}\label{bm1}\ee
In these formulae  we denote the partial derivatives with respect to the variables $x^i$ by a subscript, for example, $f_1=\tfrac{\partial f}{\partial x^1}$ or  $f_{13}=\tfrac{\partial^2 f}{\partial x^3\partial x^1}$. 
We should point out that, if we scale the first three forms in this co-frame by $\frac{1}{f_{3}}$ and the last three by  $-\frac{3}{2} f_{3}$, then we obtain a co-frame which satisfies the structure equations
(2.17) in  \cite{bryant06}.

We will now restrict to  the distributions $\Cal D_f$ with $f=f(x^1,x^3)$, however all the following results will also be true for  $f=f(x^2,x^3)$. The metrics in \eqref{bm} defined by $f=f(x^1,x^3)$ have  properties that are remarkably similar to those of pp-waves. To describe these, we
denote by $\cA$ the algebra of functions depending only on $x^1$ and $x^3$. 
\begin{lemma}\label{newprop}
The metric  $g_{{\mathcal D}_f}$ in \eqref{bm}  defined by $f\in \cA$ has the following  properties:
\begin{enumerate}
\item For $k\in \{2,4,6\}$ we have
$ \nabla \theta^k\in \spA\{\theta^i\otimes \theta^j\mid i,j\in \{2,4,6\}\}  $.
\item 
The $3$-form 
\[\kappa=(f_3)^{-1/3}\theta^2\wedge \theta^4\wedge\theta^6\]
is parallel, $\nabla \kappa=0$.
\item The rank $3$ distribution $\Cal V:=\{X\in T\Cal U\mid X\hook \kappa=0\}$ is parallel, i.e.,
\[\nabla_XV \in \Gamma (\Cal V),\quad\text{ for all $V\in  \Gamma (\Cal V)$ and all $X\in T\Cal U$}.\]
In particular, 
$\Cal V$ is integrable and, in fact, spanned by $\frac{\partial}{\partial x^2}$, $\frac{\partial}{\partial y^1}$ and $\frac{\partial}{\partial y^3}$.
\end{enumerate}
\end{lemma}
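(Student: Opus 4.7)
My plan is to reduce everything to assertion (i), which I would obtain by a direct Cartan computation; (ii) and (iii) then follow formally.

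First, I would substitute $f=f(x^1,x^3)$ into \eqref{bm1}. Since $f_2=f_{12}=f_{22}=f_{23}=0$, the last three coframe forms simplify to
\[
\theta^4=\tfrac{3}{2}f_3^{2}\,dx^1,\qquad
\theta^5=\tfrac{3}{2}f_3\,dx^2-\tfrac{1}{2}f_{33}\,\theta^1,\qquad
\theta^6=\tfrac{3}{2}f_3^{2}\,dx^3+\tfrac{1}{2}f_{13}\,\theta^2.
\]
Direct exterior differentiation shows that $d\theta^2$, $d\theta^4$, $d\theta^6$ all lie in $\Lambda^{2}\,\mathrm{span}\{\theta^2,\theta^4,\theta^6\}$ with coefficients in $\cA$ (incidentally confirming integrability of $\Cal V$ via Frobenius). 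I would then solve the first Cartan structure equations $d\theta^a+\omega^a{}_b\wedge\theta^b=0$ together with metric compatibility $\omega_{ab}+\omega_{ba}=0$, where indices are lowered using the constant matrix of $g_{\Cal D_f}$ in this null coframe (whose only nonzero entries are $g_{14}=g_{25}=g_{36}=1$). This determines the $\omega^a{}_b$ uniquely, and (i) is then read off by inspection: $\omega^k{}_j\equiv 0$ for $k\in\{2,4,6\},\ j\in\{1,3,5\}$, and $\omega^k{}_j\in\spA\{\theta^2,\theta^4,\theta^6\}$ for $k,j\in\{2,4,6\}$.

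Granted (i), assertion (ii) is a one-line consequence. Writing $\nabla_X\theta^k=-\omega^k{}_j(X)\theta^j$, the contributions with $j\in\{4,6\}$ are killed upon wedging with $\theta^4\wedge\theta^6$ (respectively with the cyclically permuted factors), while (i) eliminates $j\in\{1,3,5\}$, leaving only the diagonal pieces $\omega^k{}_k$. Hence
\[
\nabla_X\kappa=\Bigl(-\tfrac{1}{3}f_3^{-4/3}X(f_3)-f_3^{-1/3}\bigl(\omega^2{}_2+\omega^4{}_4+\omega^6{}_6\bigr)(X)\Bigr)\theta^2\wedge\theta^4\wedge\theta^6,
\]
so $\nabla\kappa=0$ reduces to the single scalar identity $\omega^2{}_2+\omega^4{}_4+\omega^6{}_6=-\tfrac{1}{3}\,d\log f_3$, readily verified from the explicit connection forms obtained in the previous step; indeed the power $-1/3$ appearing in $\kappa$ is precisely the normalization forced by this trace.

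Assertion (iii) is then automatic from (ii): for $V\in\Gamma(\Cal V)$ and any $X$,
\[
0=\nabla_X(V\hook\kappa)=(\nabla_X V)\hook\kappa+V\hook\nabla_X\kappa=(\nabla_X V)\hook\kappa,
\]
so $\nabla_X V\in\Gamma(\Cal V)$. The explicit basis $(\partial_{x^2},\partial_{y^1},\partial_{y^3})$ of $\Cal V$ comes from solving $\theta^2(X)=\theta^4(X)=\theta^6(X)=0$ pointwise (using $f_3\neq 0$); these three coordinate vector fields commute, so integrability also follows directly. The main obstacle is the Cartan calculation of step (i): spotting the sparsity pattern in the $\omega^a{}_b$ is essential, but it is lengthy enough to be best carried out with the \emph{DifferentialGeometry} package mentioned in the introduction.
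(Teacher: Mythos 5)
Your proposal is correct and follows essentially the same route as the paper: part (i) rests on a direct frame/connection computation (which the paper likewise performs by machine), part (ii) amounts to computing $\nabla\bigl(h\,\theta^2\wedge\theta^4\wedge\theta^6\bigr)$ and observing that the exponent $-1/3$ is exactly what makes the scalar coefficient vanish, and part (iii) follows formally from (ii) via the contraction identity. Your repackaging of (ii) as the trace identity $\omega^2{}_2+\omega^4{}_4+\omega^6{}_6=-\tfrac13\,d\log f_3$ is only a cosmetic variant of the paper's explicit formula for $\nabla\bigl(h\,\theta^2\wedge\theta^4\wedge\theta^6\bigr)$ with general $h=h(x^1,x^3)$.
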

\begin{proof}
To check the  first point is a direct computation. For (ii), take a function $h=h(x^1,x^3)$, set $\phi=h\,\theta^2\wedge \theta^4\wedge \theta^6$, and compute
\[
\nabla \phi
= \frac{2}{9f_3^3} \left( 
(3h_3f_3+h f_{33})( 2\theta^6-   f_{13}\theta^2)
+
2(3h_1f_3+h f_{13})\theta^4\right)\otimes \theta^2\wedge\theta^4\wedge \theta^6.
\]
Then $h=(f_3)^{-1/3}$ gives the required $\kappa$. Since $\kappa$ is parallel, the distribution $\Cal V$ is parallel as well.
\end{proof}

\subsection{Bryant's conformal structures  with vanishing obstruction and their ambient metrics}
Recall that in even dimensions $n$, not every conformal class $[g]$ admits a smooth ambient metric. When proving their results in \cite{fefferman/graham85,fefferman-graham07}, Fefferman and Graham fixed 
 a metric $g_0$ in the conformal class and performed    the 
\emph{power series expansion} of $g(x^i,\varrho)$ in the variable $\varrho$, 
\be\label{expansion}
g(x^i,\varrho)=g_0+2\varrho P +\varrho^2\mu+\ldots.\ee
It follows that 
$P$ is the Schouten tensor
 for $g_0$ and, if $n>4$, that $\mu$ is given by
\be\label{mu}\mu_{ij}=\frac{1}{4-n}B_{ij}+P_i^{~k}P_{kj}.\ee
Here $B$ is the \emph{Bach tensor} of the metric $g_0$. It is defined by
$$B_{ij}=\nabla^k C_{ijk}-P^{kl}W_{kijl},$$
where $W_{ijkl}$ is the \emph{Weyl tensor} and 
$C_{ijk}$
 the {\em Cotton tensor} of $g_0$,
$$C_{ijk}=\nabla_kP_{ij}-\nabla_jP_{ik}.$$

In dimension $n=6$, for the power expansion of $g(x,\varrho)$ to continue beyond the $\varrho^2$ term, the metric $g_0$ representing the conformal class $[g_0]$ must satisfy 
\be\begin{aligned}\label{ot}
{\mathcal O}_{ij}&=\nabla^k\nabla_kB_{ij}-2W_{kijl}B^{kl}-4P_k^{~k}B_{ij}+8P^{kl}\nabla_lC_{(ij)k}-4C^{k~l}_{~i}C_{ljk}+\\&\quad\quad\quad +
2C_i^{~kl}C_{jkl}+4C_{ij}^{\,\,\,\, l}\nabla_lP^k_{~k}-4W_{kijl}P^k_{~m}P^{ml}\ \equiv\  0.\end{aligned}
\ee
In particular, when the {\em obstruction tensor} ${\mathcal O}_{ij}$ vanishes, the metric $g(x,\varrho)=g_0+2P\varrho+\mu\varrho^2$, when inserted in the formula \eqref{ambient-intro} for $\tilde{g}$, provides an example of an ambient metric $$
\tilde{g}=2\der(\varrho t)\der t+t^2(g_0+2P \varrho+\mu\varrho^2),
$$ that is, satisfying $Ric(\tilde{g})= 0$.

We will now return  to  the distributions $\Cal D_f$ with $f=f(x^1,x^3)$, i.e. $f\in \cA$, because for these 
the obstruction tensor for conformal classes of metrics  in \eqref{bm} vanishes. Again, all the following remains true for $f=f(x^2,x^3)$.
\begin{proposition}\label{p1}
In the case when $f=f(x^1,x^3)$  the conformal classes $[g_{{\mathcal D}_f}]$ 
defined by the metrics $g_{{\mathcal D}_f}$ have the following properties:
\begin{enumerate}
\setlength{\labelsep=2pt}
\item Generically they are not conformally Cotton, and hence not conformally Einstein.
\item They are Bach-flat and their Schouten tensor squares to zero, $P_{i}^{~k}P_{kj}=0$.
 In particular, the tensor $\mu $ in \eqref{mu} of the quadratic terms in  the expansion \eqref{expansion} vanishes.
\item Each term in the formula \eqref{ot} for the  Fefferman-Graham obstruction tensor $\mathcal O$  vanishes separately. Hence, $\Cal O\equiv 0$. 
\item An ambient metric for all such conformal classes is given by \[\tilde{g}_f=2\der(\varrho t)\der t+t^2(g_{{\mathcal D}_f}+2\,\varrho\,P).\]
\end{enumerate}
\end{proposition}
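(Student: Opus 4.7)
My plan is to combine the parallel totally null Lagrangian distribution $\mathcal{V}$ and the parallel 3-form $\kappa$ of Lemma \ref{newprop} with direct frame computations in the co-frame \eqref{bm1} to extract strong support restrictions on the curvature tensors, from which all four claims will follow.

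The first step is to show that the Ricci tensor of $g_{\mathcal{D}_f}$ is supported only in the $\{2,4,6\}\times\{2,4,6\}$ block relative to the co-frame \eqref{bm1}. Structurally, parallelism of $\mathcal{V}$ forces the Riemann tensor to satisfy $R_{ijkl}=0$ whenever $\{i,j\}$ or $\{k,l\}$ lies in the $\mathcal{V}$-index set $\{1,3,5\}$ (using $\mathcal{V}^\perp=\mathcal{V}$ together with the pair symmetry of $R$), and the holonomy reduction coming from parallelism of $\kappa$ further constrains the trace; the remaining potential components $\Ric_{1l}$, $\Ric_{3l}$, $\Ric_{5l}$ for $l\in\{2,4,6\}$ are eliminated by a direct calculation in the frame. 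Because $g^{ij}$ pairs only odd with even indices, the scalar curvature $R=g^{ij}\Ric_{ij}$ vanishes, so the Schouten tensor $P=\tfrac14\Ric$ inherits the $\{2,4,6\}^2$ support. Then $P^i{}_j=g^{ik}P_{kj}$ is supported on $\{1,3,5\}\times\{2,4,6\}$, and the composition $P^i{}_k P^k{}_j$ requires $k$ to lie simultaneously in the disjoint sets $\{2,4,6\}$ and $\{1,3,5\}$, whence $P^2=0$.

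Next I would verify the remaining claims in (i)--(iv). For (i) it suffices to exhibit one non-vanishing component of the Cotton tensor $C_{ijk}=\nabla_k P_{ij}-\nabla_j P_{ik}$ for generic $f\in\cA$, which rules out $g_{\mathcal{D}_f}$ being conformally Cotton and hence conformally Einstein. For Bach-flatness I split $B_{ij}=\nabla^k C_{ijk}-P^{kl}W_{kijl}$: the raised Schouten $P^{kl}$ is supported in $\{1,3,5\}^2$, and the Weyl tensor's support (inherited from $R$) forces $P^{kl}W_{kijl}=0$, while $\nabla^k C_{ijk}$ is shown to vanish by direct computation; then $\mu\equiv 0$ via \eqref{mu}, completing (ii). For (iii) each of the eight summands of \eqref{ot} is checked separately: the four $B$-terms vanish because $B=0$; $C_{ij}{}^l\nabla_l P^k{}_k$ vanishes because $P^k{}_k$ is proportional to the (vanishing) scalar curvature; $W_{kijl}P^k{}_m P^{ml}$ contains $P^2=0$; and the two Cotton-squared terms $C^{k\,l}{}_i C_{ljk}$, $C_i{}^{kl}C_{jkl}$ vanish after inspecting each factor's index support against the pairing structure of $g$. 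Finally (iv) follows by substituting $g(x^i,\varrho)=g_0+2\varrho P$ into the Fefferman--Graham equations: with $\ddot g=0$, $\dot g=2P$, $P^2=0$, and the vanishing of $B$ and $\mathcal{O}$, every surviving term collapses to an identity already established, so $\Ric(\tilde g_f)=0$.

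The main technical obstacle is the direct verification that the Cotton-tensor divergence $\nabla^k C_{ijk}$ and the Cotton-squared contributions to \eqref{ot} vanish: these are polynomial identities in up to fourth-order partial derivatives of $f(x^1,x^3)$ expressed through the rather intricate connection coefficients of \eqref{bm1}, and we expect to carry them out with the \emph{DifferentialGeometry} Maple package, as elsewhere in the paper. The structural input from Lemma \ref{newprop} reduces almost everything to index-support bookkeeping, but these few residual identities are the true computational core.
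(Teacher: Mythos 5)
Your overall strategy --- reading off the vanishing of the Bach and obstruction tensors from the support of $P$, $\nabla P$ and $W$ relative to the co-frame \eqref{bm1}, with the parallel distribution $\mathcal{V}$ and the $3$-form $\kappa$ of Lemma \ref{newprop} supplying the structure and Maple supplying the residual identities --- is essentially the paper's proof of parts (ii)--(iv). But part (i) as you argue it does not work. Exhibiting a non-vanishing component of the Cotton tensor of the particular representative $g_{{\mathcal D}_f}$ only shows that $g_{{\mathcal D}_f}$ itself is not Cotton-flat; it does not show that the conformal class contains no Cotton-flat metric, which is what ``not conformally Cotton'' means (and what is needed for the implication ``hence not conformally Einstein'', since an Einstein representative is automatically Cotton-flat). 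The Cotton tensor is not conformally invariant in dimension $6$: under a conformal rescaling it changes by a term of the form $\Upsilon^{\ell}W_{\ell ijk}$, and since $W\neq 0$ here this term can in principle cancel $C_{ijk}$. The correct argument, which the paper uses, is that a necessary condition for the class to be conformally Cotton is the existence of a vector field $\Upsilon$ with $C_{ijk}+\Upsilon^{\ell}W_{\ell ijk}=0$ \cite{gover-nurowski04}, and one checks that generically this overdetermined linear system for $\Upsilon$ has no solution.

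A smaller point: the support restriction on $W$ that you invoke to kill $P^{kl}W_{kijl}$ is not the one you can actually derive from parallelism of $\mathcal{V}$. Parallelism (together with $\mathcal{V}=\mathcal{V}^{\perp}$ and the pair symmetry of $R$) gives $R_{ijkl}=0$ only when an entire \emph{pair} of indices lies in $\{1,3,5\}$; with $P^{kl}$ supported in $\{1,3,5\}^{2}$ this still leaves components such as $P^{15}W_{1ij5}$ with $i,j\in\{2,4,6\}$ untouched. What is needed (and what the paper verifies directly) is the stronger statement that $W(X,\cdot,\cdot,\cdot)$ lies in $\mathrm{span}\{\theta^{i}\otimes\theta^{j}\otimes\theta^{k}\mid i,j,k\in\{2,4,6\}\}$ for every $X\in\mathcal{V}$, i.e.\ a single argument in $\mathcal{V}$ forces all three remaining indices into $\{2,4,6\}$. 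That is a computation rather than bookkeeping, so it belongs with the ``residual identities'' you defer to Maple; as written, your structural argument does not close at this step. Finally, for (iv) note that the surviving Fefferman--Graham equation is $\mathrm{Ric}(g_{{\mathcal D}_f}+2\varrho P)=(n-2)P$ for \emph{all} $\varrho$, which is a genuinely new identity not implied by $P^{2}=0$, $B=0$ and $\mathcal{O}=0$; it must be (and in the paper is) checked directly.
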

\begin{proof}
The fundamental observation is that 
the Schouten tensor $P$ of $g_{{\mathcal D}_f}$ with $f\in \cA$ satisfies\footnote{The explicit formulae for the Schouten tensor components are not needed here and can be found in the Maple worksheet at \url{http://digitalcommons.usu.edu/dg_applications/}.}
\begin{equation}
\label{roz1}
P\in \spA \{\theta^i\, \theta^j\mid i,j\in \{2,4,6\}\}
\end{equation}
and 
\[
\nabla P\in \spA \{\theta^i \otimes \theta^j\otimes \theta^k\mid i,j,k\in \{2,4,6\}\}.
\]
Property  \eqref{roz1} implies that the 
Schouten tensor is
 $2$-step nilpotent, $P_{ik}P^k_{~j}=0$.
Moreover, the Weyl   tensor $W$ satisfies 
\[ W(X,.,.,.)\in \mathrm{span} \{\theta^i \otimes \theta^j\otimes \theta^k\mid i,j,k\in \{2,4,6\}\}\]
whenever $X\in \Cal V$. These properties for the Schouten and the Weyl tensor imply the vanishing of the Bach tensor and of all other terms in the 
formula \eqref{ot} for $\Cal O$. 

That the metrics generically are not conformally Cotton-flat can be seen  by checking that generically there is no vector 
$\Upsilon^i$ such that
\[ C_{ijk}+\Upsilon^\ell W_{\ell ijk}=0,\]
which is a necessary condition for a metric being conformal to a Cotton-flat metric \cite{gover-nurowski04}.

The formula for an ambient metric then follows immediately from \eqref{expansion}. It is a
a straightforward computation to   check that $\tilde{g}_f$ is indeed Ricci-flat.
\end{proof}
Although  Proposition \ref{p1} gives us an explicit formula for an ambient metric for   Bryant's conformal classes
$[g_{{\mathcal D}_f}]$ with $f=f(x^1,x^3)$ or $f=f(x ^2,x^3)$,
 the ambient metrics are not unique.
In order to find additional ambient metrics and exhibit the ambiguity at order 3 in $\varrho$, 
we again make an ansatz for these higher order terms to have the same form as  Schouten tensor $P$ of $g_{D_f}$  as given in \eqref{roz1}.
Analogously to  pp-waves  and  generic rank 2 distributions in dimension $5$,  our ansatz is 
\begin{equation}\tilde{g}_{f,S}=2\der(\varrho t)\der t+t^2(g_{{\mathcal D}_f}+2\,\varrho\,P+S(x^1,x^3,\varrho)),\end{equation}
with the tensor $S=S(x^1,x^3,\varrho)$ as 
\be 
S(x^1,x^3,\varrho)=S_{22}(\theta^2)^2+2S_{24}\theta^2\theta^4+2S_{26}\theta^2\theta^6+S_{44}(\theta^4)^2+2S_{46}\theta^4\theta^6+S_{66}(\theta^6)^2\label{roz3}.\ee 
We now impose the Ricci-flatness condition on $\tilde{g}_{f,S}$ and remarkably get a  similar result as in Theorem~\ref{linsys1}, this time showing the ambiguity at the $\varrho^3$-term. A straightforward computation of the Ricci-tensor of $\tilde{g}_{f,S}$ yields:
\begin{theorem}\label{BryantAmbientTheo}
Let $$\tilde{g}_{f,S}=2\der(\varrho t)\der t+t^2(g_{{\mathcal D}_f}+2\,\varrho\,P+S(x^1,x^3,\varrho)),$$
be a metric
with $f=f(x^1,x^3)$, $g_{{\mathcal D}_f}$  a metric as in (\ref{bm})-(\ref{bm1}) with Schouten tensor $P$ as in (\ref{roz1}), 
 and with the tensor $S=S(x^1,x^3,\varrho)$ as in (\ref{roz3}). Then $ \tilde{g}_{f,S}$ 
is an ambient metric for the conformal class of Bryant's metrics $[g_{{\mathcal D}_f}]$ if and only if \emph{all} the functions $S_{ij}=S_{ij}(x^1,x^3,\varrho)$ satisfy  \emph{the same linear} PDE, $$\varrho\frac{\partial^2S_{ij}}{\partial \varrho^2}-2\frac{\partial S_{ij}}{\partial \varrho}=0.$$  The general solution $S$ satisfying the initial conditions $S_{ij}(x^1,x^3,0)= 0$,  
is given by $S=
\varrho^3 Q$ with 
$$
Q(x^1,x^3)=Q_{22}(\theta^2)^2+2Q_{24}\theta^2\theta^4+2Q_{26}\theta^2\theta^6+Q_{44}(\theta^4)^2+2Q_{46}\theta^4\theta^6+Q_{66}(\theta^6)^2,$$
 where the functions $Q_{ij}(x^1,x^3)$ are \emph{arbitrary}. Every   such $Q$ gives an ambient metric for $g_{\mathcal D_f}$ by
\begin{equation}\label{BryantAmbientmetric}
\tilde{g}_{f,Q}~=~2\der(\varrho t)\der t~+~t^2\Big(\, g_{{\mathcal D}_f}~+~2\,\varrho \,P~+~\varrho^3 \,Q(x^1,x^3)\, \Big),\end{equation}
with $P$ the Schouten tensor of $g_{{\mathcal D}_f}$ as in \eqref{roz1}.
\end{theorem}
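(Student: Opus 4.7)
The plan is to directly compute the Ricci tensor of $\tilde g_{f,S}$ and reduce the Ricci-flatness condition to the claimed ODE, following the strategy already used for pp-waves and for generic rank-$2$ distributions in dimension $5$. The key structural input is Lemma \ref{newprop}: the forms $\theta^2,\theta^4,\theta^6$ annihilate the parallel, maximally isotropic rank-$3$ distribution $\mathcal V$, and the covariant derivatives of $\theta^2,\theta^4,\theta^6$ stay within the span $\{\theta^i\otimes\theta^j\mid i,j\in\{2,4,6\}\}$. Since $P$ from \eqref{roz1} and $S$ from \eqref{roz3} have components only in this isotropic subspace, the corresponding $(1,1)$-tensors are nilpotent and moreover annihilate each other: as endomorphisms we have $P^2=0$, $S^2=0$, $PS=SP=0$, and the same holds for any $\varrho$-derivative of $S$.

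First I would write out the Fefferman-Graham system for the $\varrho$-family $g(x,\varrho)=g_{\mathcal D_f}+2\varrho P+S(x^1,x^3,\varrho)$ as in \cite[eq.~3.17]{fefferman-graham07}. All the quadratic terms in this system (products of $g'$, or of $g'$ with $\mathrm{Ric}(g(\cdot,\varrho))$) vanish by the nilpotency above, so the equation collapses to a linear one. Next I would compute $\mathrm{Ric}(g(\cdot,\varrho))$ explicitly and show, again using Lemma \ref{newprop}, that it equals $\mathrm{Ric}(g_{\mathcal D_f})$ plus only linear corrections in $S$ and its $x$-derivatives, with no $S''$-contribution from $\mathrm{Ric}$. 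Combining these with $g'|_{\varrho=0}=2P$ and the vanishing of the $\mu$-term in \eqref{mu} from Proposition \ref{p1}(ii), the Fefferman-Graham system reduces, for $n=6$, to the single linear ODE
\[
\varrho\,\frac{\partial^2 S_{ij}}{\partial\varrho^2}\ -\ 2\,\frac{\partial S_{ij}}{\partial\varrho}\ =\ 0
\]
decoupled in each of the six components $S_{ij}$.

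This ODE is of Euler type: the ansatz $S_{ij}=\varrho^\lambda$ yields the indicial equation $\lambda(\lambda-3)=0$, so the general solution is $S_{ij}=A_{ij}(x^1,x^3)+Q_{ij}(x^1,x^3)\varrho^3$. The initial condition $S_{ij}|_{\varrho=0}=0$ forces $A_{ij}=0$, leaving $S=\varrho^3 Q$ with $Q$ the arbitrary symmetric tensor of the stated form, and the sufficiency direction follows by running the reduction in reverse. The main obstacle is the careful bookkeeping to confirm that the $\varrho$-deformation by $S$ introduces no nonlinear or cross-coupling terms in $\mathrm{Ric}(g(\cdot,\varrho))$; this is precisely where Lemma \ref{newprop}(i) is essential, as it prevents Christoffel symbols built from $g_{\mathcal D_f}$ from mixing the $\mathcal V$-directions with transverse directions in a way that would spoil linearity. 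Once this is in place, the $\varrho^3$ ambiguity encoded in $Q$ matches exactly the general even-dimensional Fefferman-Graham non-uniqueness at order $n/2=3$.
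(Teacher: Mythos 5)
Your plan is correct and follows essentially the same route as the paper, whose proof consists precisely of the ``straightforward computation of the Ricci tensor'' of $\tilde g_{f,S}$ that you describe, with the nilpotency of $P$ and $S$ (and the structure from Lemma~\ref{newprop}) accounting for the linearization and decoupling into the single ODE $\varrho S_{ij}''-2S_{ij}'=0$ for each component. Your solution of that ODE via the indicial roots $0$ and $3$, and the elimination of the constant by the initial condition, matches the statement exactly.
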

\begin{remark}
There is an analogous Theorem for $[g_{{\mathcal D}_f}]$ with $f=f(x^2,x^3)$.
\end{remark}

The  tensor $Q$, which is trace free with respect to $g_{D_f}$,   is responsible for the ``Fefferman-Graham ambiguity'', i.e., for the ambiguity of  ambient metrics for conformal classes in even dimensions. In general, it appears in the curvature tensor and thus cannot be gauged away by coordinate transformations. 

\subsection{On the holonomy of the ambient metrics}

Now we will study the holonomy of ambient metrics for conformal classes $[g_{{\Cal D}_f}]$ with $f\in \cA$. For ambient metrics with $Q\not=0$ we get a similar result  as for pp-waves. For the ambient metrics with $Q=0$, we show that their holonomy is contained in $\mathbf{Spin}(4,3)$. Our terminology regarding invariance and stabilizers is explained in Remark \ref{terminology}.

\begin{proposition}\label{propnew2}
 Let $f=f(x^1,x^3)$ be a smooth function of two variables,   $[g_{D_f}]$ be   Bryant's conformal structure represented by a metric $g_{D_f}$ as in \eqref{bm}, and let
\[\tilde{g}_{f,Q}=2\der(\varrho t)\der t+t^2(g_{{\mathcal D}_f}+2\,\varrho\,P+ \varrho^3\, Q),\]
be one of its ambient metrics with $Q\in \spA \{ \theta^i\theta^j \mid i,j\in \{2,4,6\}\} $.
Then:
\begin{enumerate}
\item If $\kappa=(f_3)^{-1/3}\theta^2\wedge \theta^4\wedge\theta^6$ is the $3$-form defined in Lemma~\ref{newprop},
then the 
$4$-form 
\[\widetilde{\kappa}=-\frac{2\, t^3}{9}\,dt\wedge \kappa  \ =\ 
-\frac{2\, t^3}{9\,(f_3)^{1/3}}\, dt\wedge  \theta^2\wedge \theta^4\wedge\theta^6  ,
\]
is parallel, i.e, $\tnab\widetilde{\kappa}=0$.
\item 
The  totally null distribution $\widetilde{\Cal V}$  of tangent vectors with $X\hook \widetilde{\kappa}=0$, which is spanned by 
$\frac{\partial}{\partial x^2}$, $\frac{\partial}{\partial y^1}$, $\frac{\partial}{\partial y^3}$ and $\partial_\varrho$, 
 is parallel.
 \item The holonomy algebra of $\tilde{g}_{f,Q}$  is contained in the stabilizer  of $\widetilde{\kappa}$, 
\[\mathfrak{hol}(\tilde{g}_{f,Q})\ \subset\ 
\mathfrak{sl}_4\R \ltimes \Lambda_4\R =
 \left\{{\small
\begin{pmatrix}
X & Z 
\\[1mm]
0 &-X^\top
\end{pmatrix}}
\mid
 X\in \mathfrak{sl}_4\R, \ Z+Z^\top=0\right\}\subset \mathfrak{so}(4,4),
\]
where $\Lambda_4\R$ denotes the skew symmetric $4\times 4$ matrices\footnote{We avoid the notation $\soa(4)$ here, because it would obscure the fact that $\Lambda_4\R$ is an Abelian ideal in $\mathfrak{sl}_4\R \ltimes \Lambda_4\R $.}. The representation of $
\mathfrak{sl}_4\R$ on the Abelian ideal  $\Lambda_4\R$ is given by $X\cdot Z= XZ-(XZ)^\top$.
\end{enumerate}
\end{proposition}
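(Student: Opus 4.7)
My plan is to prove (i) by direct computation of the Levi-Civita connection of $\tilde g_{f,Q}$; parts (ii) and (iii) then follow from (i) by general principles. The computation in (i) exploits two structural facts: by Lemma~\ref{newprop}(ii) the 3-form $\kappa=(f_3)^{-1/3}\theta^2\wedge\theta^4\wedge\theta^6$ is parallel for the Levi-Civita connection of $g_{{\mathcal D}_f}$ on $M$; and by construction both $P$ and $Q$ lie in $\spA\{\theta^i\theta^j\mid i,j\in\{2,4,6\}\}$, so the perturbation $h(x,\varrho):=2\varrho P+\varrho^3 Q$ has support only in the $\{2,4,6\}\times\{2,4,6\}$ index block. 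In particular $\mathcal V=\mathrm{span}(\partial_{x^2},\partial_{y^1},\partial_{y^3})$ remains $g(x,\varrho)$-isotropic for every $\varrho$, the inverse metric $g(x,\varrho)^{-1}$ has a vanishing $\{2,4,6\}\times\{2,4,6\}$ block, and equivalently the endomorphism $g^{-1}g'$ maps $TM$ into $\mathcal V$.

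For (i) I work in coordinates $(t,\varrho,x^i,y^i)$ in which $\tilde g_{f,Q}=2\varrho\,dt^2+2t\,dt\,d\varrho+t^2 g(x,\varrho)$. A direct computation of the ambient Christoffel symbols gives
\begin{align*}
\tnab_{\partial_t}\partial_a&=\tfrac{1}{t}\partial_a\ (a\ne t),\quad \tnab_{\partial_\varrho}\partial_\varrho=0,\quad \tnab_{\partial_\varrho}\partial_i=\tfrac12 (g^{-1}g')^k{}_i\,\partial_k,\\
\tnab_{\partial_i}\partial_j&=\Gamma^k_{ij}(g)\,\partial_k-\tfrac{t}{2}g'_{ij}\,\partial_t+(\varrho g'_{ij}-g_{ij})\partial_\varrho.
\end{align*}
Writing $\widetilde\kappa=h\,dt\wedge\theta^2\wedge\theta^4\wedge\theta^6$ with $h=-\tfrac{2t^3}{9}(f_3)^{-1/3}$, I then verify $\tnab_X\widetilde\kappa=0$ direction by direction. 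The $\partial_t$-case is immediate since $X(h)=3h/t$ cancels the contribution of $\tnab_{\partial_t}\theta^i=-\tfrac{1}{t}\theta^i$ summed over $i\in\{2,4,6\}$. The $\partial_\varrho$-case uses $(g^{-1}g')^i{}_k=0$ for $i\in\{2,4,6\}$, which forces $\tnab_{\partial_\varrho}\theta^i=0$, combined with $\partial_\varrho h=0$. For $X\in\mathcal V$ the argument reduces to the parallelism of $\kappa$ on $M$: the tensor difference $\Gamma(g)-\Gamma(g_{{\mathcal D}_f})$ vanishes when one input lies in $\mathcal V$, because $\nabla^{g_{{\mathcal D}_f}}h$ has all three tensor indices in $\{2,4,6\}$, and simultaneously $g'_{aj}=0$ and $X(h)=0$ for $a\in\mathcal V$. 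The remaining directions $X\in\{\partial_{x^1},\partial_{x^3},\partial_{y^2}\}$ are the book-keeping step: the contributions from $X(h)$, the Schouten correction to $\tnab_{\partial_i}\partial_j$ in the $\partial_\varrho$-component, and $\nabla^g\theta^i$ combine so that each surviving term contains a repeated $\theta^i$ or $dt$ factor, and hence vanishes in the wedge product.

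For (ii), since $\widetilde\kappa$ is a nowhere-zero decomposable 4-form, $\widetilde{\mathcal V}$ is the 4-dimensional common kernel of $dt,\theta^2,\theta^4,\theta^6$; evaluating these on the coordinate frame identifies it with $\mathrm{span}(\partial_{x^2},\partial_{y^1},\partial_{y^3},\partial_\varrho)$, which the explicit form of $\tilde g_{f,Q}$ confirms is totally null. Parallelism of $\widetilde{\mathcal V}$ is immediate from (i): for $V\in\Gamma(\widetilde{\mathcal V})$ and any vector field $X$, the Leibniz rule for the interior product gives $(\tnab_XV)\hook\widetilde\kappa=\tnab_X(V\hook\widetilde\kappa)-V\hook(\tnab_X\widetilde\kappa)=0$, so $\tnab_XV\in\Gamma(\widetilde{\mathcal V})$. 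For (iii), the holonomy preserves every parallel tensor, so $\mathfrak{hol}(\tilde g_{f,Q})$ lies in the stabilizer of $\widetilde\kappa|_p$ in $\soa(4,4)$. Since $\widetilde{\mathcal V}|_p$ is maximally isotropic, its stabilizer in $\soa(4,4)$ is the maximal parabolic $\gla_4\R\ltimes\Lambda^2(\widetilde{\mathcal V}|_p)\cong\gla_4\R\ltimes\Lambda_4\R$, matching the matrix form in the statement. Requiring that the volume element $\widetilde\kappa|_p\in\Lambda^4\widetilde{\mathcal V}^*|_p$ itself (and not merely its line) be preserved forces the $\gla_4$-component to be trace-free, since its action on $\Lambda^4\widetilde{\mathcal V}^*|_p$ is through minus the trace; this cuts the stabilizer down to $\sla_4\R\ltimes\Lambda_4\R$.

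The main obstacle is the computational verification in (i) in the directions $X\in\{\partial_{x^1},\partial_{x^3},\partial_{y^2}\}$, where the derivatives of $(f_3)^{-1/3}$, the Schouten tensor, and the $\varrho$-perturbation all contribute simultaneously. The key to a clean verification is to resolve everything into the null coframe $\{dt,d\varrho,\theta^1,\dots,\theta^6\}$ and to exploit the block-triangular structure of $g(x,\varrho)$ with respect to the splitting $\mathcal V\oplus\mathcal V^*$ of~$TM$.
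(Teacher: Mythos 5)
Your proposal is correct and follows essentially the same route as the paper: a direct verification that $t^3\,dt\wedge\kappa$ is parallel (the paper condenses this to checking $\tnab(dt\wedge\kappa)=-\tfrac{3}{t}\,dt\otimes(dt\wedge\kappa)$), followed by the observation that the decomposable parallel $4$-form yields a parallel totally null $4$-plane whose stabilizer is $\mathfrak{gl}_4\R\ltimes\Lambda_4\R$, cut down to $\mathfrak{sl}_4\R\ltimes\Lambda_4\R$ by invariance of $\widetilde\kappa$ itself. Your structural bookkeeping (the $\{2,4,6\}$-block support of $2\varrho P+\varrho^3 Q$ and of its covariant derivative) correctly supplies the details the paper leaves to "one checks".
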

\begin{proof}
The fact that $\kappa$ is parallel for $g_{{\Cal D}_f}$ suggests to consider the $4$-form $dt\wedge\kappa$. 
One checks that 
\[
\tnab(dt\wedge\kappa) =-\frac{3}{t}\, dt\otimes (dt\wedge\kappa).\]
Hence, rescaling $dt\wedge \kappa$ to $t^3 \,dt\wedge \kappa$ gives a parallel form (the factor $-2/9$ will become clear in the next proposition). 
Since $\widetilde{\kappa}$ is decomposable it defines a rank $4$ distribution $\widetilde{\Cal V}$ that is parallel. 
In fact, $\widetilde{\Cal V}=\R\cdot \del_\varrho\oplus \Cal V$, where $\Cal V$ was defined in Lemma \ref{newprop}. 
Note that $\widetilde{\Cal V}$ is totally null.
The parallel null distribution $\widetilde{\Cal V}$ is invariant under the holonomy and its stabilizer in $\soa(4,4)$ is 
$\mathfrak{gl}_4\R \ltimes \Lambda_4\R$. That $\widetilde \kappa$ is invariant under the    holonomy algebra,  reduces the holonomy further to $\mathfrak{sl}_4\R \ltimes \Lambda_4\R$.
\end{proof}
\begin{remark}\label{poremark}
Note that $\mathfrak{sl}_4\R\simeq \soa(3,3)$ as Lie algebra. Moreover,  the representation of
$\mathfrak{sl}_4\R$ on the Abelian ideal $\Lambda_4\R$ in the above
 $\mathfrak{sl}_4\R \ltimes \Lambda_4\R$ is just the representation of $\mathfrak{sl}_4\R$ on $\Lambda^2\R^{4}\simeq\Lambda_4\R$. The latter carries an $\mathfrak{sl}_4\R$-invariant bilinear form of signature $(3,3)$ defined by the relation \[\sigma \wedge \ \xi =\langle \sigma,\xi\rangle\, e_1\wedge e_2\wedge e_3\wedge e_4.\] Hence, we have
\[
\mathfrak{hol}(\tilde{g}_{f,Q})\ \subset\ 
\mathfrak{sl}_4\R \ltimes \Lambda_4\R \ \simeq \ \mathfrak{po}(3,3),\]
where $\mathfrak{po}(3,3)$ denotes the Poincar\'{e} algebra in signature $(3,3)$, 
$\mathfrak{po}(3,3)=
\soa(3,3)\ltimes \R^{3,3}$.

When writing $\mathfrak{hol}(\tilde{g}_{f,Q})\subset \mathfrak{po}(3,3)\subset \soa(4,4)$
we refer to this representation and {\em not} to the more familiar representation of $\mathfrak{po}(3,3)$ in $\soa(4,4)$ as the stabilizer of a null vector. The same applies in the  following, when we consider $\mathfrak{po}(3,2)=\soa(3,2)\ltimes \R^{3,2}\subset \mathfrak{po}(3,3)$.
\end{remark}
Since $\mathfrak{po}(3,3)$ and $\mathfrak{spin}(4,3)$ have the same dimension, but are different,  we obtain:
\begin{corollary}
Ambient  metrics \eqref{BryantAmbientmetric} for Bryant's conformal structures with $f=f(x^1,x^3)$ or $f=f(x^2,x^3)$  cannot have holonomy equal to $\mathfrak{spin}(4,3)$.
\end{corollary}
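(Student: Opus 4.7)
The plan is to combine the containment established in the preceding proposition with a simple dimension count and a structural argument to rule out the equality $\mathfrak{hol}(\tilde{g}_{f,Q}) = \mathfrak{spin}(4,3)$. The corollary is really a direct consequence of what has already been proven, so the proof will be quite short.

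First I would invoke Proposition \ref{propnew2}(iii), which gives the containment
\[
\mathfrak{hol}(\tilde{g}_{f,Q})\ \subset\ \mathfrak{sl}_4\R \ltimes \Lambda_4\R \ \simeq\ \mathfrak{po}(3,3),
\]
with the representation inside $\soa(4,4)$ described in Remark \ref{poremark}. Next I would compute dimensions on both sides: $\mathfrak{sl}_4\R$ has dimension $15$ and $\Lambda_4\R \simeq \Lambda^2\R^4$ has dimension $6$, so $\dim \mathfrak{po}(3,3) = 21$, which coincides with $\dim \mathfrak{spin}(4,3) = \dim \mathfrak{so}(4,3) = 21$.

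Now I would argue by contradiction. Suppose $\mathfrak{hol}(\tilde{g}_{f,Q})$ equals $\mathfrak{spin}(4,3)$ as a subalgebra of $\mathfrak{so}(4,4)$. Since it is then a $21$-dimensional subalgebra contained in the $21$-dimensional subalgebra $\mathfrak{po}(3,3)$, equality forces $\mathfrak{spin}(4,3) = \mathfrak{po}(3,3)$ as subalgebras of $\mathfrak{so}(4,4)$, hence they are isomorphic as abstract Lie algebras. But $\mathfrak{po}(3,3) = \soa(3,3) \ltimes \R^{3,3}$ contains the nontrivial abelian ideal $\R^{3,3}$, so it is not semisimple, whereas $\mathfrak{spin}(4,3) \simeq \soa(4,3)$ is simple. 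This contradiction establishes the claim.

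No serious obstacle is expected; the entire content of the corollary rests on the proposition together with the observation in the remark that the two $21$-dimensional algebras $\mathfrak{po}(3,3)$ and $\mathfrak{spin}(4,3)$, while of equal dimension, are non-isomorphic as Lie algebras (one has a large abelian ideal, the other is simple). The only point worth mentioning explicitly in the write-up is that we are using the specific embedding of $\mathfrak{po}(3,3)$ in $\soa(4,4)$ from Remark \ref{poremark} (as the stabilizer of the totally null $4$-plane $\widetilde{\Cal V}$), not the more familiar embedding as the stabilizer of a null vector.
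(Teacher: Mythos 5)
Your proof is correct and follows essentially the same route as the paper, which deduces the corollary from Proposition \ref{propnew2}(iii) together with the one-line observation that $\mathfrak{po}(3,3)$ and $\mathfrak{spin}(4,3)$ have the same dimension ($21$) but are different Lie algebras. Your write-up merely makes explicit what the paper leaves implicit, namely that equality of holonomy with $\mathfrak{spin}(4,3)$ would force the two $21$-dimensional algebras to coincide, which is impossible since one is simple and the other has a nontrivial abelian ideal.
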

The next example shows that $\mathfrak{po}(3,3)$ is actually attained as holonomy group.
\begin{example}We set $f=x^1 (x^3)^2$ and  $Q=\theta^2 \theta^6$. In this case it turns out that the holonomy algebra of the ambient metric \eqref{BryantAmbientmetric}
is   $21$-dimensional. Hence, by Proposition 
\ref{propnew2}
it is the  Poincar\'{e} algebra in signature $(3,3)$,
\[\mathfrak{hol}_p(\tilde{g}_{f,Q})\simeq\mathfrak{po}(3,3)=\mathfrak{so}(3,3)\ltimes\R^{3,3}.
\]
\end{example}

\begin{remark}
Since the conformal holonomy is contained in the ambient holonomy, the result in \cite[Theorem 1]{Lischewski14} implies that a conformal classes $[g_{\Cal D_f}]$ with $f=f(x^1,x^3)$ contains a certain preferred  metric $g_0$. This metric  admits a parallel totally null rank $3$ distribution which contains the image of the Ricci-tensor (or equivalently, of the Schouten tensor). We have already established in Lemma \ref{newprop} and in the proof of Proposition \ref{p1} that the metric $g_{\Cal D_f}$ is equal  to this metric $g_0$, which shows that we have chosen a suitable conformal  factor.
\end{remark}

Now we turn to  ambient metrics with $Q=0$. From now on we will assume that
 $f=f(x^1,x^3)$  is a smooth function of two variables,   $[g_{D_f}]$    Bryant's conformal structure represented by a metric 
$g_{{\mathcal D}_f}=2\theta^1\theta^4+2\theta^2\theta^5+2\theta^3\theta^6$
as in \eqref{bm}, with Schouten tensor $P$, and that
\[\tilde{g}_{f}=2\der(\varrho t)\der t+t^2(g_{{\mathcal D}_f}+2\,\varrho\,P),\]
is the ambient metric with $Q=0 $.
Then, a direct computation shows:
\begin{lemma}\label{alphalemma}
The $2$-form
\[\alpha
=
-\frac{9\, t}{2} 
\, f_3^{4/3}\, dt\wedge \theta^2
+
\frac{ t^2}{ f_3^{5/3}}\,
\big( 
 \theta^4\wedge\theta^6
+
\frac{5}{2}\, f_{13}\,
\theta^2\wedge\theta^4
+
2
\, f_{33}\,  \theta^2\wedge\theta^6
 \big)\]
is parallel for $\tilde{g}_{f}$ and squares to $\widetilde{\kappa}$, i.e., $\alpha\wedge\alpha=\widetilde{\kappa}$. 
\elem
The key step in simplifying the calculations from now on is to introduce a co-frame $\omega^i$ in which  the metric $\tilde{g}_{f} $,  the parallel $2$-form $\alpha$, and  the parallel $4$-form $\widetilde{\kappa}$  can be written as follows:
\begin{eqnarray*}\tilde g_{f}& =&  \omega^1 \omega^5 + \omega^2 \omega^6 + \omega^3 \omega^7 + \omega^4 \omega^8,\\
 \alpha &= & \omega^5 \wedge \omega^8 + \omega^6 \wedge \omega^7,
\\
\widetilde{\kappa}&=&\omega^{5678},\qquad\text{where we denote $\omega^{ijhk} = \omega^i\wedge \omega^j\wedge \omega^h \wedge \omega^k$.}
\end{eqnarray*}
This means that the parallel null distribution $\widetilde{\Cal V}$ is spanned by $E_1, \ldots , E_4$, where $E_i$ is the dual frame to $\omega^i$, $\omega^i(E_j)=\delta^i_{~j}$.
Such a co-frame is given by
\begin{align*}
\omega^1 &=  \varrho\,dt + t\, d\varrho -\frac{t \varrho}{9\, f_{3}^3}\left( 5\, f_{13}\theta^4 + 4f_{33} \, \theta^6\right),
\\
\omega^2 &= 
t^2 \left(  
-\frac{5\,   f_{13}}{9\, f_3^3}\,  d\varrho + \theta^1 + \frac{\varrho \left(  85\, f_{33}^2-27  f_3\, f_{113} \right)}{  81\, f_3^6}\, \theta^4 \right),
\\
\omega^3 &= 
-\frac{4  \, f_{33}}{9\, f_3^{4/3}}\, d\varrho 
+ f_3^{5/3}\, \theta^3 
+ \frac{2\varrho \left(80f_{33}f_{13} -27\,f_3\, f_{133} \right)}{81\, f_3^{13/3}}\,  \theta^4
 + \frac{\varrho\left(76\,f_{33}^2 -27\,f_3\, f_{333}\right)}{81\, f_3^{13/3}}\,\theta^6
,
\\
\omega^4 &= 
t\, \Big(
s_1\,\theta^2\Big.
+s_2\,\theta^4 
-\frac{2}{9\,  f_3^{4/3}}\, \theta^5 
+ s_3\,\theta^6
\Big),
\\
\omega^5 &=  dt + \frac{t}{9\, f_3^3}\left(  5\, f_{13} \theta^4 + 4\,f_{33} \, \theta^6\right),  
\qquad
\omega^6 = \theta^4,
\qquad
\omega^7 =  \frac{t^2}{f_3^{5/3}} \, \theta^6,
\qquad
\omega^8 = -\frac{9\, f_3^{4/3}}{2}\, \theta^2,
\end{align*}
where the functions $s_1$, $s_2$ and $s_3$ are defined by
\begin{eqnarray*}
s_1&:=&\frac{\varrho\left( 5\,f_3\,  f_{13}^2 f_{333} - 20\, f_{13}^2\, f_{33}^2   +3\,f_3^2 \, f^2_{133}+4\, f_3\,f_{33}^2\, f_{113} -3\,f_3^2\, f_{113}\,f_{333}\right)}{486\, f_{3}^{22/3}},
\\
s_2&=&-
\frac{2\, \varrho \left(5\, f_3\, f_{13}\,f_{133} -20\, f_{33}\, f_{13}^2 
+4\, f_{3}\, f_{33}\, f_{113}\right)}
{243\, f_{3}^{22/3}},
\\
s_3&:=& 
-\frac{2\varrho\left( 5\, f_3\, f_{13}\, f_{333} -20\, f_{13}\, f_{33}^2 + 4\, f_{3}\, f_{33}\, f_{133}\right)}
{243\, f_{3}^{22/3}}.
\end{eqnarray*}
Using this co-frame, the computations simplify and we are able to determine the parallel $4$-forms and the holonomy of $\tilde{g}_{f}$ when $Q=0$. To this end, using the frame $\omega^i$, we define  the $4$ form 
\begin{equation}
\label{phihat}\hat\phi=\omega^{1256}+\, \omega^{1357}-\, \omega^{1458} - 2\, \omega^{1467} 
- 2\, \omega^{2358} -\, \omega^{2367}
 +\, \omega^{2468}+\, \omega^{3478},\end{equation}
and the function 
\begin{equation}
\label{fcts} s =\frac{2\, \varrho\left( 80\, f_{33}\, f_{13}-27\, f_3\, f_{133}\right) }{81\, f_3^{13/3}}. \end{equation}
Then we can show: 
\begin{proposition}
\label{propnew3}
Let $f=f(x^1,x^3)$  a smooth function of two variables,   $[g_{D_f}]$    Bryant's conformal structure represented by a metric 
$g_{{\mathcal D}_f}=2\theta^1\theta^4+2\theta^2\theta^5+2\theta^3\theta^6$
as in \eqref{bm}, and let
\[\tilde{g}_{f}=2\der(\varrho t)\der t+t^2(g_{{\mathcal D}_f}+2\,\varrho\,P),\]
be the ambient metric with $Q=0 $. Then: 
\begin{enumerate}
\item 
If $\widetilde{\kappa}=\omega^{5678}$ is the  parallel  $4$-form of Lemma \ref{alphalemma}, $\hat\phi$ the $4$-form in \eqref{phihat} and $s$ the function in \eqref{fcts}, then the $4$-forms 
\begin{align*}
\beta& = 
\alpha\wedge \left(\omega^{15}+\omega^{26}+\omega^{37}+\omega^{48}-s \, \omega^{58}\right)\ =\ 
\omega^{1567} - \omega^{2568} - \omega^{3578} + \omega^{4678}
-s \,  \widetilde{\kappa},\quad\text{and}\\
\phi &= \, \hat{\phi}+s\,\beta +\frac{s^2}{2}\, \widetilde{\kappa}
 \end{align*}
are   parallel, self-dual, and satisfy
\[\alpha\wedge \beta =0,
\qquad 
\alpha \wedge \phi =
-3 * \alpha=-3(\omega^{14}+\omega^{23})\wedge \widetilde{\kappa},\]
as well as
\[
\beta\wedge \beta= \beta\wedge \phi=0 ,\qquad \phi\wedge \phi= 14\ \omega^{12345678}.\]
\item The  holonomy algebra of $\tilde{g}_{f}$ is contained in 
$\mathfrak{po}(3,2)$,
where 
 $\mathfrak{po}(3,2)=\soa(3,2)\ltimes \R^{3,2}$ is the Poincar\'{e} algebra in signature $(3,2)$ represented as explained in Remark \ref{poremark}.
 \item The   holonomy algebra of $\tilde{g}_{f}$ is contained in $\mathfrak{spin}(4,3)$.
\end{enumerate}
\end{proposition}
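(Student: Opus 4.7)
The proof divides according to the three claims.

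For (i), the first step is to compute the Levi-Civita connection $1$-forms $\widetilde\Gamma^i{}_j$ of $\tilde g_f$ in the null co-frame $\omega^i$ via Cartan's first structure equation $d\omega^i = -\widetilde\Gamma^i{}_j\wedge\omega^j$. This is simplified considerably because the parallel null $4$-plane $\widetilde{\mathcal V}$ forces $\widetilde\Gamma^i{}_j=0$ for $i\in\{1,\ldots,4\}$ and $j\in\{5,\ldots,8\}$. With these in hand, the parallelism $\widetilde\nabla\beta=0$ and $\widetilde\nabla\phi=0$ reduces to a finite symbolic computation; the crucial point is that the function $s$ in \eqref{fcts} was chosen so that $\widetilde\nabla\hat\phi$ is cancelled exactly by $\widetilde\nabla(s\beta+\tfrac12 s^2\widetilde\kappa)$. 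The self-duality of $\beta$ and $\phi$ and the wedge identities $\alpha\wedge\beta=0$, $\alpha\wedge\phi=-3\,{\ast}\alpha$, $\beta\wedge\beta=\beta\wedge\phi=0$, $\phi\wedge\phi=14\,\omega^{12345678}$ are purely multilinear and follow by direct expansion in the null frame, using that any product of more than four of the one-forms $\omega^5,\ldots,\omega^8$ vanishes.

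For (ii), I combine the three parallel tensors $\widetilde\kappa$, $\alpha$, $\phi$ to pin down the holonomy representation. Proposition \ref{propnew2}(iii) already gives $\mathfrak{hol}(\tilde g_f)\subset\mathfrak{sl}_4\mathbb R\ltimes\Lambda_4\mathbb R$, the stabilizer of $\widetilde\kappa$. Because $\alpha$ lies in the exterior square of the annihilator of $\widetilde{\mathcal V}$, the abelian ideal $\Lambda_4\mathbb R$ of translations acts trivially on it, so the invariance of $\alpha$ constrains only the $\mathfrak{sl}_4\mathbb R$-factor; since $\alpha\wedge\alpha$ is a nonzero multiple of $\widetilde\kappa$, $\alpha$ is non-degenerate on $\widetilde{\mathcal V}^*$ and its stabilizer in $\mathfrak{sl}_4\mathbb R$ equals $\mathfrak{sp}(4,\mathbb R)\simeq\mathfrak{so}(3,2)$. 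As an $\mathfrak{sp}(4,\mathbb R)$-module, $\Lambda_4\mathbb R=\Lambda^2\mathbb R^4$ decomposes as $\mathbb R\langle\alpha\rangle\oplus\alpha^\perp\simeq\mathbb R\oplus\mathbb R^{3,2}$, and the additional invariance of the ``mixed'' $4$-form $\phi$ cuts off the one-dimensional summand (since the translation along $\alpha$ itself acts non-trivially on $\hat\phi$), yielding $\mathfrak{hol}(\tilde g_f)\subset\mathfrak{so}(3,2)\ltimes\mathbb R^{3,2}=\mathfrak{po}(3,2)$.

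For (iii), I would exploit the observation that $\phi$ is a parallel \emph{split Cayley} $4$-form: it is self-dual and satisfies $\phi\wedge\phi=14\,\omega^{12345678}$, which are exactly the defining normalisations of the Cayley form in signature $(4,4)$. The stabilizer in $\mathbf{SO}(4,4)$ of such a $\phi$ is, by the standard algebraic fact, $\mathbf{Spin}(4,3)$ embedded into $\mathbf{SO}(4,4)$ via its real $8$-dimensional spin representation (the split analogue of the well-known Riemannian result that the stabilizer of the Cayley form in $\mathbf{SO}(8)$ is $\mathbf{Spin}(7)$). Since $\phi$ is parallel, $\mathfrak{hol}(\tilde g_f)$ is contained in this stabilizer, proving (iii). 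As a consistency check, one may verify directly that the subalgebra $\mathfrak{po}(3,2)$ obtained in (ii) embeds into $\mathfrak{spin}(4,3)$ as the stabilizer of a null vector in $\mathbb R^{4,3}$ (both sides have dimension $15$), the embedding being the one induced by the common ambient $\mathfrak{so}(4,4)$. The main conceptual step is therefore the identification of $\phi$ as a split Cayley form; the principal obstacle lies less in part (iii) itself than in the bookkeeping of part (i), where one must track the $s$-dependent correction terms and the connection forms carefully enough to see the required cancellations.
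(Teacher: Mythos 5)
Your parts (i) and (ii) follow essentially the paper's route. Part (i) is, as you say, a direct frame computation (one small caution: the parallelism of $\widetilde{\Cal V}=\mathrm{span}(E_1,\dots,E_4)$ kills the block $\widetilde{\Gamma}^i{}_j$ with $i\in\{5,\dots,8\}$, $j\in\{1,\dots,4\}$, not the transposed block, which would express parallelism of the complementary null plane). In (ii) the paper likewise passes to the stabilizer $\mathfrak{sp}_2\R\ltimes\Lambda_4\R$ of $\alpha$, splits $\Lambda_4\R=\R\cdot J\oplus J^{\perp}\simeq\R\oplus\R^{3,2}$ using that $\alpha\wedge\alpha\neq0$, and then excludes the one-dimensional summand; the only difference is that the paper does this last step via $\Cal J\cdot\beta=-2\widetilde{\kappa}\neq0$, whereas you use the action on $\hat\phi$. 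Your variant works (one finds $\Cal J\cdot\hat\phi=4(\beta+s\,\widetilde{\kappa})$, hence $\Cal J\cdot\phi=4\beta+2s\,\widetilde\kappa\neq0$), but to conclude that $\Cal J$ is not in the holonomy you must let it act on the \emph{parallel} form $\phi$, not on $\hat\phi$, which is not parallel.

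The genuine gap is in part (iii). Self-duality together with the normalisation $\phi\wedge\phi=14\,\omega^{12345678}$ does \emph{not} characterise the split Cayley form, so these two facts do not imply that the stabilizer of $\phi$ is $\spin(4,3)$. Indeed, the $\sog(4,4)$-orbit of Cayley forms has dimension $28-21=7$, while the self-dual $4$-forms with a fixed value of $\phi\wedge\phi$ form a $34$-dimensional quadric in the $35$-dimensional space $\Lambda^4_+$; for example a suitable multiple of $\xi^{1234}+\xi^{5678}$ in a pseudo-orthonormal co-frame is self-dual with the same normalisation but has a stabilizer quite different from $\sping(4,3)$. So the identification of $\phi$ as a Cayley form---which you rightly call the main conceptual step---still has to be established, and your proposed shortcut for it would fail. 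The paper proves it by exhibiting an explicit pseudo-orthonormal co-frame $\xi^i$ in which $\hat\phi$ becomes literally the standard $\spin(4,3)$-invariant $4$-form in the conventions of \cite{baumkath99}, and by noting that along $\varrho=0$ the parallel form $\phi$ coincides with $\hat\phi$, so that its stabilizer at one point (hence everywhere, by parallel transport) is $\spin(4,3)$. Any equivalent pointwise algebraic verification (for instance producing the associated parallel spinor, as is done for the $\mathbf{G}_2$ case earlier in the paper) would also do; but self-duality plus the wedge-square normalisation are necessary conditions only, not sufficient ones.
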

%
%
%
%

\begin{proof}
To check that  $\beta$ and $\phi$ are  parallel is a direct computation. Note that, when $Q\not=0$, in general they are  not parallel.
  
Since the $2$-form $\alpha$ of  Lemma \ref{alphalemma} is parallel,  the holonomy algebra is contained
in its stabilizer, i.e. in 
$\mathfrak{sp}_2(\R)\ltimes  \Lambda_4\R$. Here  $\mathfrak{sp}_2(\R)$ is the symplectic Lie algebra in $\mathfrak{sl}_4(\R)$ and again
 $ \Lambda_4\R$ are the skew-symmetric $4\times 4$ matrices.
 From  $\alpha$ we obtain the skew adjoint endomorphism field $\Cal J$  by
 \[ \tilde{g}_{f}(\Cal J X,Y)=\alpha(X,Y).\]
Since $\alpha $ is parallel, $\Cal J$ is parallel as well. In the fixed co-frame $\omega^i$ and its dual $E_i$,  
$\Cal J$ is of the form 
\[\Cal J=E_4\otimes \omega^5+E_3\otimes \omega^6 -E_2\otimes \omega^7-E_1\otimes \omega^8,\]
or, when written as a matrix with respect to $E_i$, 
\[\Cal J=\begin{pmatrix} 0&J\\0&0\end{pmatrix},\quad\text{with }J={\scriptsize \begin{pmatrix}0&0&0&-1\\ 0&0&-1&0\\ 0&1&0&0\\1&0&0&0\end{pmatrix}}.\]
First we notice that $\mathfrak{sp}_2\R$ acts trivially on $J$ by its very definition. Secondly, since $\alpha\wedge \alpha\not=0 $, $J$ is non-degenerate with respect to the signature $(3,3)$ scalar product defined by the identification $\Lambda_4\R\simeq \Lambda^2\R^4$ (see Remark \ref{poremark}). Hence, $\Lambda_4\R$ splits invariantly under $\mathfrak{sp}_2\R$ as \[\Lambda_4\R= (\R\cdot J)^\perp\oplus \R\cdot J =\R^{3,2}\oplus \R.\] Recalling  the isomorphism $\mathfrak{sp}_2\R\simeq \soa(3,2)$, that is given by this representation, 
we obtain that the holonomy is contained in $(\mathfrak{so}(3,2)\ltimes \R^{3,2})\oplus \R$.
Finally, with $\Cal J\cdot \alpha=0$, one can check that 
\[\Cal J\cdot \beta= \alpha\wedge \Cal J\cdot \left( \omega^{15}+\omega^{26}+\omega^{37}+\omega^{48}-s \, \omega^{58} \right)
=-2\, \alpha\wedge \alpha =-2\, \widetilde{\kappa}
\not=0.\] 
Since $\beta$ is parallel and hence holonomy invariant, this implies that $\Cal J$ is not contained in the holonomy algebra, which is then further reduced to  $\mathfrak{po}(3,2)=\mathfrak{so}(3,2)\ltimes \R^{3,2}$.

In order to check that the holonomy algebra is contained in $\mathfrak{spin}(4,3)$ we introduce the
  co-frame 
\begin{align*}
\xi^1 &= -\frac {a}{64}\,\omega^1+a\,\omega^5,\quad
&\xi^2 &= \frac{a}{16}\,\omega^2 - \frac{a}{4}\,\omega^6,\quad
&\xi^3 &= \frac{a}{8}\,\omega^3- \frac{a}{8}\,\omega^7,
\\
\xi^4 &=\frac{a}2\,\omega^4- \frac{a}{32}\,\omega^8,\quad
&\xi^5 &= \frac{a}{16}\, \omega^2 + \frac{a}{4}\,\omega^6, \quad
&\xi^6 &=-\frac{a}{64}\,\omega^1 -a\,\omega^5,
\\
\xi^7 &= -\frac{a}{2}\,  \omega^4 - \frac{a}{32}\,\omega^8, \quad 
&\xi^8 &= - \frac{a}{8}\, \omega^3 - \frac{a}{8}\, \omega^7,
&&\quad \text{with $a = \sqrt{32}$.}
\end{align*}
This transforms the metric $\tilde g_{f}$ and the $4$-form $\hat\phi$ into
\begin{align*}
\tilde g_{f} =\  &-(\xi^1)^2 - (\xi^2)^2 - (\xi^3)^2 - (\xi^4 )^2 + 
(\xi^5)^2 + (\xi^6)^2 +(\xi^7)^2 +(\xi^8 )^2,
\quad \text{
and }
\\
\hat{\phi} =\  & \xi^{1234}- \xi^{1256}+ \xi^{1278}- \xi^{1357}-\xi^{1368}- \xi^{1458} 
+ \xi^{1467}
\\\quad
&+ \xi^{2358}- \xi^{2367} -\xi^{2457}-\xi^{2468}+ \xi^{3456}- \xi^{3478}+ \xi^{5678}.
\end{align*}
Thus $\hat\phi$ is precisely the 4-form whose stabilizer defines the representation $\mathfrak{spin}(4,3) \subset \soa(4,4)$, where we use the conventions in \cite{baumkath99}. Along $\varrho=0$ the parallel form $\phi$ is equal to  $\hat\phi$.  This is sufficient to
 conclude that the stabilizer of  $\phi$ is  $\mathfrak{spin}(4,3)$, which yields
$
\mathfrak{hol}(\tilde g_{f}) \subset \mathfrak{spin}(4,3).
$
\end{proof}

The next example shows that in general the holonomy does not reduce further than $\mathfrak{po}(3,2)$, proving the inclusion 
 $\mathfrak{po}(3,2)\subset \mathfrak{spin}(4,3)$.
\begin{example} We set $f = x^1(x^3)^2$  and $Q = 0$. In this case we compute the holonomy of $\tilde{g}_{f}$ 
to be a $15$-dimensional Lie algebra $\mathfrak{hol}_p(\tilde g_{f})$. Hence it is equal to $\mathfrak{po}(3,2)$. Since we also have $\mathfrak{hol}_p(\tilde g_{f})\subset \mathfrak{spin}(4,3)$, this  shows the inclusion $\mathfrak{po}(3,2)\subset \mathfrak{spin}(4,3)$.

Note that the null $4$-plane   $\widetilde{\mathcal V}$ of  vectors $X$ such that $X \hook \alpha=0$   is the only  subspace that is invariant under the holonomy.  In particular, it does not admit any invariant lines, reflecting the fact that the conformal classes defined by $f=f(x^1,x^3)$ generically are  not conformally Einstein. 
\end{example}

Having  the inclusion $\mathfrak{po}(3,2)\subset \mathfrak{spin}(4,3)$, we can  summarize:
\begin{theorem}
Let $f=f(x^1,x^3)$ be a smooth function of two variables,   $[g_{D_f}]$ be   Bryant's conformal structure represented by a metric $g_{D_f}$ as in \eqref{bm} and with Schouten tensor $P$, and let
\[\tilde{g}_{f}=2\der(\varrho t)\der t+t^2(g_{{\mathcal D}_f}+2\,\varrho\,P),\]
be the ambient metric with $Q=0 $.
Then the holonomy of $\tilde{g}_{f}$ is contained in $\mathfrak{po}(3,2)$, and hence contained in $\spin(4,3)$.
\end{theorem}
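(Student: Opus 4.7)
The plan is to treat this theorem as a summary corollary of Proposition \ref{propnew3} together with the example that immediately precedes it, since all of the substantive work has already been carried out. I will only need to assemble the pieces in the right order.

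First, I would invoke Proposition \ref{propnew3}(ii), which directly asserts that $\mathfrak{hol}(\tilde g_f)\subset \mathfrak{po}(3,2)$, with $\mathfrak{po}(3,2)$ embedded in $\mathfrak{so}(4,4)$ as the stabilizer of the parallel 2-form $\alpha$ of Lemma~\ref{alphalemma} (using the reduction of $\mathfrak{sp}_2\R\ltimes \Lambda_4\R$ to $\mathfrak{po}(3,2)=\mathfrak{so}(3,2)\ltimes \R^{3,2}$ obtained by splitting off the $\R\cdot J$-direction killed by the parallel 4-form $\beta$). This already gives the first claim of the theorem without further computation.

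Second, to deduce $\mathfrak{hol}(\tilde g_f)\subset \mathfrak{spin}(4,3)$, there are two equivalent routes. The direct route is to cite Proposition \ref{propnew3}(iii), which produces a parallel 4-form $\phi$ whose stabilizer in $\mathfrak{so}(4,4)$ is precisely $\mathfrak{spin}(4,3)$ (using the orthonormal co-frame $\xi^i$ and matching $\hat\phi$ with the standard generic $4$-form of \cite{baumkath99}). The more structural route, which is what the wording \emph{``and hence contained in''} in the theorem suggests, is to first establish the abstract inclusion $\mathfrak{po}(3,2)\subset \mathfrak{spin}(4,3)$ as subalgebras of $\mathfrak{so}(4,4)$ and then combine it with the first inclusion. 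This abstract inclusion is provided by the example with $f=x^1(x^3)^2$ and $Q=0$ preceding the theorem: for this choice the holonomy is simultaneously $15$-dimensional (hence equal to $\mathfrak{po}(3,2)$ by Proposition \ref{propnew3}(ii)) and contained in $\mathfrak{spin}(4,3)$ by Proposition \ref{propnew3}(iii), forcing $\mathfrak{po}(3,2)\subset \mathfrak{spin}(4,3)$ inside $\mathfrak{so}(4,4)$.

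There is essentially no obstacle here beyond the bookkeeping of which embeddings are being used; the nontrivial computational content has already been absorbed into Proposition \ref{propnew3} and its supporting Lemma \ref{alphalemma} (the construction of $\alpha$, $\beta$, $\phi$ and the passage to the orthonormal co-frame $\xi^i$). The only point worth flagging explicitly in the proof is that the two copies of $\mathfrak{po}(3,2)$ and $\mathfrak{spin}(4,3)$ inside $\mathfrak{so}(4,4)$ are of the nonstandard type described in Remark \ref{poremark}, so that the inclusion $\mathfrak{po}(3,2)\subset \mathfrak{spin}(4,3)$ is realized by these particular $15$- and $21$-dimensional subalgebras rather than by their more familiar representations.
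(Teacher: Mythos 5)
Your proposal matches the paper's own treatment: the theorem is presented there as a summary immediately after the sentence ``Having the inclusion $\mathfrak{po}(3,2)\subset\mathfrak{spin}(4,3)$, we can summarize,'' so it is deduced exactly as you describe, from Proposition \ref{propnew3}(ii) and (iii) together with the example $f=x^1(x^3)^2$, $Q=0$, which realizes $\mathfrak{po}(3,2)$ as a holonomy algebra inside $\mathfrak{spin}(4,3)$ and thereby establishes the abstract inclusion of the two (nonstandardly embedded) subalgebras of $\soa(4,4)$. Your flagging of the embedding conventions from Remark \ref{poremark} is exactly the right point to make explicit.
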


Our last example is an ambient metric with the $7$-dimensional Heisenberg algebra as holonomy.
\begin{example} Now we set $f=x^1x^3$ and $Q= 0$. In this case it turns out that the holonomy algebra of the corresponding ambient metric is  a $7$-dimensional, $2$-step nilpotent Lie algebra
 with a $1$-dimensional centre. Hence,  as a Lie algebra, $\mathfrak{hol}_p(\tilde{g}_{f})$ is isomorphic to the $7$-dimensional Heisenberg algebra $\mathfrak{he}_3(\mathbb{R})$  defined as 
\[ \mathfrak{he}_3(\mathbb{R})\ =\ 
\left\{\left. {\small \begin{pmatrix} 0 & u & r \\
0&0&v
\\
0&0&0\end{pmatrix}}\right| u,v\in \R^3, r\in\R\right\}
.\]
In addition to the parallel forms described in Proposition \ref{propnew3}, this ambient metric admits two parallel $1$-forms, namely
\begin{eqnarray*}
\varphi^1=
(x^1)^{c_{+}}\,dt +\frac{c_+\, t\,}{9\,  (x^1)^{c_{-}}}\, \theta^4
&\text{ and }&
  \varphi^2=
(x^1)^{c_{-}}\,dt +
    \dfrac{c_-\, t}{9\, (x^1)^{c_+}}\theta^4,\quad\text{with         $c_{\pm} =  \frac{1}{6}(9 \pm \sqrt{21})$.}
        \end{eqnarray*}
The parallel $1$-forms $\varphi^1$ and $\varphi^2$ are  linearly independent on a dense open set,   null and orthogonal to each other.
Since the conformal holonomy is contained in the ambient holonomy \cite{armstrong-leistner07}, $\varphi^1$ and $\varphi^2$ define two conformal standard tractors  that are null and parallel for the normal conformal tractor connection. It is well-known (see for example \cite{leistner05a,Leitner05}) that parallel tractors, on a dense open set,  correspond to  local Einstein scales (Ricci-flat scales in case of null tractors). We conclude that the  conformal class $[g_{{\mathcal D}_f}] $ defined by  $f=x^1x^3$ contains two local Ricci-flat  metrics. Indeed,  one verifies that the metrics
\[
\left(c_1(x^1)^{c_+}-c_2(x^1)^{c_-} \right)^{-2}\ g_{{\mathcal D}_f},
\]
in which $c_1$ and $c_2$ are constants, 
are Ricci-flat.

\end{example}

\begin{remark}[Non-vanishing obstruction]
We close the paper with the observation that \emph{not all} of Bryant's conformal structures have vanishing  obstruction tensor. For  functions $f$ that are more general than $f=f(x^1,x^3)$ or $f=f(x^2,x^3)$, the Fefferman-Graham obstruction does \emph{not} vanish. This happens for example for the generic rank 3 distribution associated with the function \[f=x^3+x^1 x^2+(x^2)^2+(x^3)^2.\]
For this $f$ the obstruction tensor ${\mathcal O}_{ij}$ in \eqref{ot} is \emph{not} zero --- it has 13 non-vanishing components.
Hence, the conformal class associated to this $f$ does not admit an analytic ambient metric.
\end{remark}


\providecommand{\MR}[1]{}\def\cprime{$'$} \def\cprime{$'$} \def\cprime{$'$}

\end{document}